\def\ftoday{le \space\number\day \space\ifcase\month\or
  janvier\or f\'evrier\or mars\or avril\or mai\or juin\or
  juillet\or ao\^ut\or septembre\or octobre\or novembre\or d\'ecembre\fi
  \space\number\year}
\newtheorem{theorem}{Theorem}
\newtheorem{axiom}[theorem]{Axiom}
\newtheorem{conjecture}[theorem]{Conjecture}
\newtheorem{corollary}[theorem]{Corollary}
\newtheorem{definition}[theorem]{Definition}
\newtheorem{example}[theorem]{Example}
\newtheorem{exercise}[theorem]{Exercise}
\newtheorem{lemma}[theorem]{Lemma}
\newtheorem{proposition}[theorem]{Proposition}
\newtheorem{remark}[theorem]{Remark}
\def\Ga{{\Gamma}}
\def\De{\Delta}
\def\ep{{\epsilon}}
\def\la{\lambda}
\def\La{\Lambda}
\newcommand{\card}{\operatorname{card}}
\newcommand{\cH}{\mathcal{H}}
\renewcommand{\cH}{\mathcal{H}}
\def\bk{{\bf k}}
\def\bK{{\bf K}}
\def\bm{{\bf m}}
\chardef\@x10\chardef\@xv60
\def\tcitime{
\def\@time{%
  \@minute\time\@hour\@minute\divide\@hour\@xv
  \ifnum\@hour<\@x 0\fi\the\@hour:%
  \multiply\@hour\@xv\advance\@minute-\@hour
  \ifnum\@minute<\@x 0\fi\the\@minute
  }}%
\def\QCTOpt[#1]#2{%
  \def\QCTOptB{#1}
  \def\QCTOptA{#2}
}
\def\QCTNOpt#1{%
  \def\QCTOptA{#1}
  \let\QCTOptB\empty
}
\def\Qct{%
  \@ifnextchar[{%
    \QCTOpt}{\QCTNOpt}
}
\def\QCBOpt[#1]#2{%
  \def\QCBOptB{#1}
  \def\QCBOptA{#2}
}
\def\QCBNOpt#1{%
  \def\QCBOptA{#1}
  \let\QCBOptB\empty
}
\def\Qcb{%
  \@ifnextchar[{%
    \QCBOpt}{\QCBNOpt}
}
\def\PrepCapArgs{%
  \ifx\QCBOptA\empty
    \ifx\QCTOptA\empty
      {}%
    \else
      \ifx\QCTOptB\empty
        {\QCTOptA}%
      \else
        [\QCTOptB]{\QCTOptA}%
      \fi
    \fi
  \else
    \ifx\QCBOptA\empty
      {}%
    \else
      \ifx\QCBOptB\empty
        {\QCBOptA}%
      \else
        [\QCBOptB]{\QCBOptA}%
      \fi
    \fi
  \fi
}
\def\GRAPHICSPS#1{%
 \ifcase\GRAPHICSTYPE
   \special{ps: #1}%
 \or
   \special{language "PS", include "#1"}%
 \fi
}%
\def\graffile#1#2#3#4{%
    \bgroup
    \leavevmode
    \@ifundefined{bbl@deactivate}{\def~{\string~}}{\activesoff}
    \raise -#4 \BOXTHEFRAME{%
        \hbox to #2{\raise #3\hbox to #2{\null #1\hfil}}}%
    \egroup
}%
\def\draftbox#1#2#3#4{%
 \leavevmode\raise -#4 \hbox{%
  \frame{\rlap{\protect\tiny #1}\hbox to #2%
   {\vrule height#3 width\z@ depth\z@\hfil}%
  }%
 }%
}%
\newif\ifwasdraft
\def\GRAPHIC#1#2#3#4#5{%
 \ifnum\draft=\@ne\draftbox{#2}{#3}{#4}{#5}%
  \else\graffile{#1}{#3}{#4}{#5}%
  \fi
 }%
\def\addtoLaTeXparams#1{%
    \edef\LaTeXparams{\LaTeXparams #1}}%
\newif\ifBoxFrame \BoxFramefalse
\newif\ifOverFrame \OverFramefalse
\newif\ifUnderFrame \UnderFramefalse
\def\BOXTHEFRAME#1{%
   \hbox{%
      \ifBoxFrame
         \frame{#1}%
      \else
         {#1}%
      \fi
   }%
}
\def\doFRAMEparams#1{\BoxFramefalse\OverFramefalse\UnderFramefalse\readFRAMEparams#1\end}%
\def\readFRAMEparams#1{%
 \ifx#1\end%
  \let\next=\relax
  \else
  \ifx#1i\dispkind=\z@\fi
  \ifx#1d\dispkind=\@ne\fi
  \ifx#1f\dispkind=\tw@\fi
  \ifx#1t\addtoLaTeXparams{t}\fi
  \ifx#1b\addtoLaTeXparams{b}\fi
  \ifx#1p\addtoLaTeXparams{p}\fi
  \ifx#1h\addtoLaTeXparams{h}\fi
  \ifx#1X\BoxFrametrue\fi
  \ifx#1O\OverFrametrue\fi
  \ifx#1U\UnderFrametrue\fi
  \ifx#1w
    \ifnum\draft=1\wasdrafttrue\else\wasdraftfalse\fi
    \draft=\@ne
  \fi
  \let\next=\readFRAMEparams
  \fi
 \next
 }%
\def\IFRAME#1#2#3#4#5#6{%
      \bgroup
      \let\QCTOptA\empty
      \let\QCTOptB\empty
      \let\QCBOptA\empty
      \let\QCBOptB\empty
      #6%
      \parindent=0pt%
      \leftskip=0pt
      \rightskip=0pt
      \setbox0 = \hbox{\QCBOptA}%
      \@tempdima = #1\relax
      \ifOverFrame
          \typeout{This is not implemented yet}%
          \show\HELP
      \else
         \ifdim\wd0>\@tempdima
            \advance\@tempdima by \@tempdima
            \ifdim\wd0 >\@tempdima
               \textwidth=\@tempdima
               \setbox1 =\vbox{%
                  \noindent\hbox to \@tempdima{\hfill\GRAPHIC{#5}{#4}{#1}{#2}{#3}\hfill}\\%
                  \noindent\hbox to \@tempdima{\parbox[b]{\@tempdima}{\QCBOptA}}%
               }%
               \wd1=\@tempdima
            \else
               \textwidth=\wd0
               \setbox1 =\vbox{%
                 \noindent\hbox to \wd0{\hfill\GRAPHIC{#5}{#4}{#1}{#2}{#3}\hfill}\\%
                 \noindent\hbox{\QCBOptA}%
               }%
               \wd1=\wd0
            \fi
         \else
            \ifdim\wd0>0pt
              \hsize=\@tempdima
              \setbox1 =\vbox{%
                \unskip\GRAPHIC{#5}{#4}{#1}{#2}{0pt}%
                \break
                \unskip\hbox to \@tempdima{\hfill \QCBOptA\hfill}%
              }%
              \wd1=\@tempdima
           \else
              \hsize=\@tempdima
              \setbox1 =\vbox{%
                \unskip\GRAPHIC{#5}{#4}{#1}{#2}{0pt}%
              }%
              \wd1=\@tempdima
           \fi
         \fi
         \@tempdimb=\ht1
         \advance\@tempdimb by \dp1
         \advance\@tempdimb by -#2%
         \advance\@tempdimb by #3%
         \leavevmode
         \raise -\@tempdimb \hbox{\box1}%
      \fi
      \egroup%
}%
\def\DFRAME#1#2#3#4#5{%
 \begin{center}
     \let\QCTOptA\empty
     \let\QCTOptB\empty
     \let\QCBOptA\empty
     \let\QCBOptB\empty
     \ifOverFrame 
        #5\QCTOptA\par
     \fi
     \GRAPHIC{#4}{#3}{#1}{#2}{\z@}
     \ifUnderFrame 
        \nobreak\par\nobreak#5\QCBOptA
     \fi
 \end{center}%
 }%
\def\FFRAME#1#2#3#4#5#6#7{%
 \begin{figure}[#1]%
  \let\QCTOptA\empty
  \let\QCTOptB\empty
  \let\QCBOptA\empty
  \let\QCBOptB\empty
  \ifOverFrame
    #4
    \ifx\QCTOptA\empty
    \else
      \ifx\QCTOptB\empty
        \caption{\QCTOptA}%
      \else
        \caption[\QCTOptB]{\QCTOptA}%
      \fi
    \fi
    \ifUnderFrame\else
      \label{#5}%
    \fi
  \else
    \UnderFrametrue%
  \fi
  \begin{center}\GRAPHIC{#7}{#6}{#2}{#3}{\z@}\end{center}%
  \ifUnderFrame
    #4
    \ifx\QCBOptA\empty
      \caption{}%
    \else
      \ifx\QCBOptB\empty
        \caption{\QCBOptA}%
      \else
        \caption[\QCBOptB]{\QCBOptA}%
      \fi
    \fi
    \label{#5}%
  \fi
  \end{figure}%
 }%
\def\makeactives{
  \catcode`\"=\active
  \catcode`\;=\active
  \catcode`\:=\active
  \catcode`\'=\active
  \catcode`\~=\active
}
   \gdef\activesoff{%
      \def"{\string"}
      \def;{\string;}
      \def:{\string:}
      \def'{\string'}
      \def~{\string~}
    }
\def\FRAME#1#2#3#4#5#6#7#8{%
 \bgroup
 \ifnum\draft=\@ne
   \wasdrafttrue
 \else
   \wasdraftfalse%
 \fi
 \def\LaTeXparams{}%
 \dispkind=\z@
 \def\LaTeXparams{}%
 \doFRAMEparams{#1}%
 \ifnum\dispkind=\z@\IFRAME{#2}{#3}{#4}{#7}{#8}{#5}\else
  \ifnum\dispkind=\@ne\DFRAME{#2}{#3}{#7}{#8}{#5}\else
   \ifnum\dispkind=\tw@
    \edef\@tempa{\noexpand\FFRAME{\LaTeXparams}}%
    \@tempa{#2}{#3}{#5}{#6}{#7}{#8}%
    \fi
   \fi
  \fi
  \ifwasdraft\draft=1\else\draft=0\fi{}%
  \egroup
 }%
\def\TEXUX#1{"texux"}
\long\def\QQQ#1#2{%
     \long\expandafter\def\csname#1\endcsname{#2}}%
\long\def\QQA#1#2{}%
\def\QTR#1#2{{\csname#1\endcsname #2}}
\def\EXPAND#1[#2]#3{}%
\def\NOEXPAND#1[#2]#3{}%
\def\LaTeXparent#1{}%
\def\ChildStyles#1{}%
\def\ChildDefaults#1{}%
\def\QTagDef#1#2#3{}%
  \providecommand{\UNICODE}[2][]{}
\def\QQfnmark#1{\footnotemark}
 \def\abstract{%
  \if@twocolumn
   \section*{Abstract (Not appropriate in this style!)}%
   \else \small 
   \begin{center}{\bf Abstract\vspace{-.5em}\vspace{\z@}}\end{center}%
   \quotation 
   \fi
  }%
   \def\registered{\relax\ifmmode{}\r@gistered
                    \else$\m@th\r@gistered$\fi}%
 \def\r@gistered{^{\ooalign
  {\hfil\raise.07ex\hbox{$\scriptstyle\rm\text{R}$}\hfil\crcr
  \mathhexbox20D}}}}{}%
\def\TEXTsymbol#1{\mbox{$#1$}}%
\newdimen\theight
\def\Column{%
 \vadjust{\setbox\z@=\hbox{\scriptsize\quad\quad tcol}%
  \theight=\ht\z@\advance\theight by \dp\z@\advance\theight by \lineskip
  \kern -\theight \vbox to \theight{%
   \rightline{\rlap{\box\z@}}%
   \vss
   }%
  }%
 }%
\def\qed{%
 \ifhmode\unskip\nobreak\fi\ifmmode\ifinner\else\hskip5\p@\fi\fi
 \hbox{\hskip5\p@\vrule width4\p@ height6\p@ depth1.5\p@\hskip\p@}%
 }%
\def\miss{\hbox{\vrule height2\p@ width 2\p@ depth\z@}}%
\def\tcol#1{{\baselineskip=6\p@ \vcenter{#1}} \Column}  %
\def\newfmtname{LaTeX2e}
  \DeclareOldFontCommand{\rm}{\normalfont\rmfamily}{\mathrm}
  \DeclareOldFontCommand{\sf}{\normalfont\sffamily}{\mathsf}
  \DeclareOldFontCommand{\tt}{\normalfont\ttfamily}{\mathtt}
  \DeclareOldFontCommand{\bf}{\normalfont\bfseries}{\mathbf}
  \DeclareOldFontCommand{\it}{\normalfont\itshape}{\mathit}
  \DeclareOldFontCommand{\sl}{\normalfont\slshape}{\@nomath\sl}
  \DeclareOldFontCommand{\sc}{\normalfont\scshape}{\@nomath\sc}
\def\alpha{{\Greekmath 010B}}%
\def\beta{{\Greekmath 010C}}%
\def\gamma{{\Greekmath 010D}}%
\def\delta{{\Greekmath 010E}}%
\def\epsilon{{\Greekmath 010F}}%
\def\zeta{{\Greekmath 0110}}%
\def\eta{{\Greekmath 0111}}%
\def\theta{{\Greekmath 0112}}%
\def\iota{{\Greekmath 0113}}%
\def\kappa{{\Greekmath 0114}}%
\def\lambda{{\Greekmath 0115}}%
\def\mu{{\Greekmath 0116}}%
\def\nu{{\Greekmath 0117}}%
\def\xi{{\Greekmath 0118}}%
\def\pi{{\Greekmath 0119}}%
\def\rho{{\Greekmath 011A}}%
\def\sigma{{\Greekmath 011B}}%
\def\tau{{\Greekmath 011C}}%
\def\upsilon{{\Greekmath 011D}}%
\def\phi{{\Greekmath 011E}}%
\def\chi{{\Greekmath 011F}}%
\def\psi{{\Greekmath 0120}}%
\def\omega{{\Greekmath 0121}}%
\def\varepsilon{{\Greekmath 0122}}%
\def\vartheta{{\Greekmath 0123}}%
\def\varpi{{\Greekmath 0124}}%
\def\varrho{{\Greekmath 0125}}%
\def\varsigma{{\Greekmath 0126}}%
\def\varphi{{\Greekmath 0127}}%
\def\nabla{{\Greekmath 0272}}
\def\FindBoldGroup{%
   {\setbox0=\hbox{$\mathbf{x\global\edef\theboldgroup{\the\mathgroup}}$}}%
}
\def\Greekmath#1#2#3#4{%
    \if@compatibility
        \ifnum\mathgroup=\symbold
           \mathchoice{\mbox{\boldmath$\displaystyle\mathchar"#1#2#3#4$}}%
                      {\mbox{\boldmath$\textstyle\mathchar"#1#2#3#4$}}%
                      {\mbox{\boldmath$\scriptstyle\mathchar"#1#2#3#4$}}%
                      {\mbox{\boldmath$\scriptscriptstyle\mathchar"#1#2#3#4$}}%
        \else
           \mathchar"#1#2#3#4%
        \fi 
    \else 
        \FindBoldGroup
        \ifnum\mathgroup=\theboldgroup 
           \mathchoice{\mbox{\boldmath$\displaystyle\mathchar"#1#2#3#4$}}%
                      {\mbox{\boldmath$\textstyle\mathchar"#1#2#3#4$}}%
                      {\mbox{\boldmath$\scriptstyle\mathchar"#1#2#3#4$}}%
                      {\mbox{\boldmath$\scriptscriptstyle\mathchar"#1#2#3#4$}}%
        \else
           \mathchar"#1#2#3#4%
        \fi     	    
	  \fi}
\newif\ifGreekBold  \GreekBoldfalse
\let\SAVEPBF=\pbf
\def\pbf{\GreekBoldtrue\SAVEPBF}%
  \newcounter{equationnumber}  
  \def\mathletters{%
     \addtocounter{equation}{1}
     \edef\@currentlabel{\theequation}%
     \setcounter{equationnumber}{\c@equation}
     \setcounter{equation}{0}%
     \edef\theequation{\@currentlabel\noexpand\alph{equation}}%
  }
    \def\BibTeX{{\rm B\kern-.05em{\sc i\kern-.025em b}\kern-.08em
                 T\kern-.1667em\lower.7ex\hbox{E}\kern-.125emX}}}{}%
\def\AmS{{\protect\usefont{OMS}{cmsy}{m}{n}%
                A\kern-.1667em\lower.5ex\hbox{M}\kern-.125emS}}}{}%
\def\@@eqncr{\let\@tempa\relax
    \ifcase\@eqcnt \def\@tempa{& & &}\or \def\@tempa{& &}%
      \else \def\@tempa{&}\fi
     \@tempa
     \if@eqnsw
        \iftag@
           \@taggnum
        \else
           \@eqnnum\stepcounter{equation}%
        \fi
     \fi
     \global\tag@false
     \global\@eqnswtrue
     \global\@eqcnt\z@\cr}
\def\TCItag{\@ifnextchar*{\@TCItagstar}{\@TCItag}}
\def\@TCItag#1{%
    \global\tag@true
    \global\def\@taggnum{(#1)}}
\def\@TCItagstar*#1{%
    \global\tag@true
    \global\def\@taggnum{#1}}
\let\DOTSI\relax
\def\RIfM@{\relax\ifmmode}%
\def\FN@{\futurelet\next}%
\def\iint{\DOTSI\intno@\tw@\FN@\ints@}%
\def\iiint{\DOTSI\intno@\thr@@\FN@\ints@}%
\def\iiiint{\DOTSI\intno@4 \FN@\ints@}%
\def\idotsint{\DOTSI\intno@\z@\FN@\ints@}%
\def\ints@{\findlimits@\ints@@}%
\newif\iflimtoken@
\newif\iflimits@
\def\findlimits@{\limtoken@true\ifx\next\limits\limits@true
 \else\ifx\next\nolimits\limits@false\else
 \limtoken@false\ifx\ilimits@\nolimits\limits@false\else
 \ifinner\limits@false\else\limits@true\fi\fi\fi\fi}%
\def\multint@{\int\ifnum\intno@=\z@\intdots@                          
 \else\intkern@\fi                                                    
 \ifnum\intno@>\tw@\int\intkern@\fi                                   
 \ifnum\intno@>\thr@@\int\intkern@\fi                                 
 \int}
\def\multintlimits@{\intop\ifnum\intno@=\z@\intdots@\else\intkern@\fi
 \ifnum\intno@>\tw@\intop\intkern@\fi
 \ifnum\intno@>\thr@@\intop\intkern@\fi\intop}%
\def\intic@{%
    \mathchoice{\hskip.5em}{\hskip.4em}{\hskip.4em}{\hskip.4em}}%
\def\negintic@{\mathchoice
 {\hskip-.5em}{\hskip-.4em}{\hskip-.4em}{\hskip-.4em}}%
\def\ints@@{\iflimtoken@                                              
 \def\ints@@@{\iflimits@\negintic@
   \mathop{\intic@\multintlimits@}\limits                             
  \else\multint@\nolimits\fi                                          
  \eat@}
 \else                                                                
 \def\ints@@@{\iflimits@\negintic@
  \mathop{\intic@\multintlimits@}\limits\else
  \multint@\nolimits\fi}\fi\ints@@@}%
\def\intkern@{\mathchoice{\!\!\!}{\!\!}{\!\!}{\!\!}}%
\def\plaincdots@{\mathinner{\cdotp\cdotp\cdotp}}%
\def\intdots@{\mathchoice{\plaincdots@}%
 {{\cdotp}\mkern1.5mu{\cdotp}\mkern1.5mu{\cdotp}}%
 {{\cdotp}\mkern1mu{\cdotp}\mkern1mu{\cdotp}}%
 {{\cdotp}\mkern1mu{\cdotp}\mkern1mu{\cdotp}}}%
\def\RIfM@{\relax\protect\ifmmode}
\def\text{\RIfM@\expandafter\text@\else\expandafter\mbox\fi}
\let\nfss@text\text
\def\text@#1{\mathchoice
   {\textdef@\displaystyle\f@size{#1}}%
   {\textdef@\textstyle\tf@size{\firstchoice@false #1}}%
   {\textdef@\textstyle\sf@size{\firstchoice@false #1}}%
   {\textdef@\textstyle \ssf@size{\firstchoice@false #1}}%
   \glb@settings}
\def\textdef@#1#2#3{\hbox{{%
                    \everymath{#1}%
                    \let\f@size#2\selectfont
                    #3}}}
\newif\iffirstchoice@
\def\Let@{\relax\iffalse{\fi\let\\=\cr\iffalse}\fi}%
\def\vspace@{\def\vspace##1{\crcr\noalign{\vskip##1\relax}}}%
\def\multilimits@{\bgroup\vspace@\Let@
 \baselineskip\fontdimen10 \scriptfont\tw@
 \advance\baselineskip\fontdimen12 \scriptfont\tw@
 \lineskip\thr@@\fontdimen8 \scriptfont\thr@@
 \lineskiplimit\lineskip
 \vbox\bgroup\ialign\bgroup\hfil$\m@th\scriptstyle{##}$\hfil\crcr}%
\def\Sb{_\multilimits@}%
\def\endSb{\crcr\egroup\egroup\egroup}%
\def\Sp{^\multilimits@}%
\newdimen\ex@
\def\rightarrowfill@#1{$#1\m@th\mathord-\mkern-6mu\cleaders
 \hbox{$#1\mkern-2mu\mathord-\mkern-2mu$}\hfill
 \mkern-6mu\mathord\rightarrow$}%
\def\leftarrowfill@#1{$#1\m@th\mathord\leftarrow\mkern-6mu\cleaders
 \hbox{$#1\mkern-2mu\mathord-\mkern-2mu$}\hfill\mkern-6mu\mathord-$}%
\def\leftrightarrowfill@#1{$#1\m@th\mathord\leftarrow
\mkern-6mu\cleaders
 \hbox{$#1\mkern-2mu\mathord-\mkern-2mu$}\hfill
 \mkern-6mu\mathord\rightarrow$}%
\def\overrightarrow{\mathpalette\overrightarrow@}%
\def\overrightarrow@#1#2{\vbox{\ialign{##\crcr\rightarrowfill@#1\crcr
 \noalign{\kern-\ex@\nointerlineskip}$\m@th\hfil#1#2\hfil$\crcr}}}%
\def\overleftarrow{\mathpalette\overleftarrow@}%
\def\overleftarrow@#1#2{\vbox{\ialign{##\crcr\leftarrowfill@#1\crcr
 \noalign{\kern-\ex@\nointerlineskip}$\m@th\hfil#1#2\hfil$\crcr}}}%
\def\overleftrightarrow{\mathpalette\overleftrightarrow@}%
\def\overleftrightarrow@#1#2{\vbox{\ialign{##\crcr
   \leftrightarrowfill@#1\crcr
 \noalign{\kern-\ex@\nointerlineskip}$\m@th\hfil#1#2\hfil$\crcr}}}%
\def\underrightarrow{\mathpalette\underrightarrow@}%
\def\underrightarrow@#1#2{\vtop{\ialign{##\crcr$\m@th\hfil#1#2\hfil
  $\crcr\noalign{\nointerlineskip}\rightarrowfill@#1\crcr}}}%
\def\underleftarrow{\mathpalette\underleftarrow@}%
\def\underleftarrow@#1#2{\vtop{\ialign{##\crcr$\m@th\hfil#1#2\hfil
  $\crcr\noalign{\nointerlineskip}\leftarrowfill@#1\crcr}}}%
\def\underleftrightarrow{\mathpalette\underleftrightarrow@}%
\def\underleftrightarrow@#1#2{\vtop{\ialign{##\crcr$\m@th
  \hfil#1#2\hfil$\crcr
 \noalign{\nointerlineskip}\leftrightarrowfill@#1\crcr}}}%
\def\qopnamewl@#1{\mathop{\operator@font#1}\nlimits@}
\let\nlimits@\displaylimits
\def\setboxz@h{\setbox\z@\hbox}
\def\varlim@#1#2{\mathop{\vtop{\ialign{##\crcr
 \hfil$#1\m@th\operator@font lim$\hfil\crcr
 \noalign{\nointerlineskip}#2#1\crcr
 \noalign{\nointerlineskip\kern-\ex@}\crcr}}}}
 \def\rightarrowfill@#1{\m@th\setboxz@h{$#1-$}\ht\z@\z@
  $#1\copy\z@\mkern-6mu\cleaders
  \hbox{$#1\mkern-2mu\box\z@\mkern-2mu$}\hfill
  \mkern-6mu\mathord\rightarrow$}
\def\leftarrowfill@#1{\m@th\setboxz@h{$#1-$}\ht\z@\z@
  $#1\mathord\leftarrow\mkern-6mu\cleaders
  \hbox{$#1\mkern-2mu\copy\z@\mkern-2mu$}\hfill
  \mkern-6mu\box\z@$}
\def\projlim{\qopnamewl@{proj\,lim}}
\def\injlim{\qopnamewl@{inj\,lim}}
\def\varinjlim{\mathpalette\varlim@\rightarrowfill@}
\def\varprojlim{\mathpalette\varlim@\leftarrowfill@}
\def\varliminf{\mathpalette\varliminf@{}}
\def\varliminf@#1{\mathop{\underline{\vrule\@depth.2\ex@\@width\z@
   \hbox{$#1\m@th\operator@font lim$}}}}
\def\varlimsup{\mathpalette\varlimsup@{}}
\def\varlimsup@#1{\mathop{\overline
  {\hbox{$#1\m@th\operator@font lim$}}}}
\def\align{\@verbatim \frenchspacing\@vobeyspaces \@alignverbatim
You are using the "align" environment in a style in which it is not defined.}
\let\csname endalign*\endcsname =\endtrivlist
\def\alignat{\@verbatim \frenchspacing\@vobeyspaces \@alignatverbatim
You are using the "alignat" environment in a style in which it is not defined.}
\let\csname endalignat*\endcsname =\endtrivlist
\def\xalignat{\@verbatim \frenchspacing\@vobeyspaces \@xalignatverbatim
You are using the "xalignat" environment in a style in which it is not defined.}
\let\csname endxalignat*\endcsname =\endtrivlist
\def\gather{\@verbatim \frenchspacing\@vobeyspaces \@gatherverbatim
You are using the "gather" environment in a style in which it is not defined.}
\let\csname endgather*\endcsname =\endtrivlist
\def\multiline{\@verbatim \frenchspacing\@vobeyspaces \@multilineverbatim
You are using the "multiline" environment in a style in which it is not defined.}
\let\csname endmultiline*\endcsname =\endtrivlist
\def\arrax{\@verbatim \frenchspacing\@vobeyspaces \@arraxverbatim
You are using a type of "array" construct that is only allowed in AmS-LaTeX.}
\def\tabulax{\@verbatim \frenchspacing\@vobeyspaces \@tabulaxverbatim
You are using a type of "tabular" construct that is only allowed in AmS-LaTeX.}
\let\csname endarrax*\endcsname =\endtrivlist
\let\csname endtabulax*\endcsname =\endtrivlist
 \def\endequation{%
     \ifmmode\ifinner 
      \iftag@
        \addtocounter{equation}{-1} 
        $\hfil
           \displaywidth\linewidth\@taggnum\egroup \endtrivlist
        \global\tag@false
        \global\@ignoretrue   
      \else
        $\hfil
           \displaywidth\linewidth\@eqnnum\egroup \endtrivlist
        \global\tag@false
        \global\@ignoretrue 
      \fi
     \else   
      \iftag@
        \addtocounter{equation}{-1} 
        \eqno \hbox{\@taggnum}
        \global\tag@false%
        $$\global\@ignoretrue
      \else
        \eqno \hbox{\@eqnnum}
        $$\global\@ignoretrue
      \fi
     \fi\fi
 } 
 \newif\iftag@ \tag@false
 \def\TCItag{\@ifnextchar*{\@TCItagstar}{\@TCItag}}
 \def\@TCItag#1{%
     \global\tag@true
     \global\def\@taggnum{(#1)}}
 \def\@TCItagstar*#1{%
     \global\tag@true
     \global\def\@taggnum{#1}}
     \def\tag{\@ifnextchar*{\@tagstar}{\@tag}}
     \def\@tag#1{%
         \global\tag@true
         \global\def\@taggnum{(#1)}}
     \def\@tagstar*#1{%
         \global\tag@true
         \global\def\@taggnum{#1}}
\def\beq{\begin{equation}}
\def\eeq{\end{equation}}
\begin{document}

\title{Existence of quasipatterns solutions of the Swift-Hohenberg equation }
\author{Boele Braaksma\thanks{%
University of Groningen, Department of Mathematics P.O. Box 800, 9700 AV
Groningen, The Netherlands. E-mail : \texttt{braaksma@rug.nl}} \hspace{1mm},
G\'erard Iooss\thanks{%
Institut Universitaire de France-Laboratoire J.-A. Dieudonn\'e U.M.R. 7351,
Universit\'e de Nice - Sophia Antipolis, Parc Valrose 06108 Nice Cedex 02,
France. E-mail : \texttt{gerard.iooss@unice.fr} } and Laurent Stolovitch 
\thanks{%
CNRS-Laboratoire J.-A. Dieudonn\'e U.M.R. 7351, Universit\'e de Nice -
Sophia Antipolis, Parc Valrose 06108 Nice Cedex 02, France. E-mail : \texttt{%
stolo@unice.fr}. Ce travail a b\'en\'efici\'e d'une aide de l'Agence
Nationale de la Recherche portant la r\'ef\'erence ``ANR-10-BLAN 0102''}}
\maketitle
\date{}

\begin{abstract}
We consider the steady Swift - Hohenberg partial differential equation. It
is a one-parameter family of PDE on the plane, modeling for example Rayleigh
- B\'enard convection. For values of the parameter near its critical value,
we look for small solutions, quasiperiodic in all directions of the plane
and which are invariant under rotations of angle $\pi/q$, $q\geq 4$. We
solve an unusual small divisor problem, and prove the existence of solutions
for small parameter values.

\end{abstract}

\tableofcontents

\section{Introduction}

\label{sec:intro}

In the present paper we study the existence of a special kind of \textbf{%
stationnary solutions} (i.e. independent of $t$), bifurcating from $0$ (i.e. tending towards zero when the parameter $\lambda$ tends towards $0$),
called \textbf{quasipatterns} of the 2-dimensional Swift-Hohenberg PDE

\begin{equation}
\frac{\partial u}{\partial t}=\lambda u-(1+\Delta )^{2}u-u^{3}
\label{eq:shtime}
\end{equation}
where $u$ is the unknown real-valued function on some subset of $\mathbb{%
R^{+}\times 
\mathbb{R}
}^{2}$, $\Delta :=\left( \frac{\partial ^{2}}{\partial x_{1}^{2}}+\frac{%
\partial ^{2}}{\partial x_{2}^{2}}\right) $ and $\lambda $ is a parameter.
These are two-dimensional patterns that have no translation symmetry and are
quasiperiodic in any spatial direction.

Mathematical existence of quasipatterns is one of the outstanding problems
in pattern formation theory. To our knowledge, hereafter is the first proof
of existence of such quasipatterns of a PDE. Quasipatterns were discovered
in nonlinear pattern-forming systems in the Faraday wave experiment~\cite%
{Christiansen1992,Edwards1994}, in which a layer of fluid is subjected to
vertical oscillations. Since their discovery, they have also been found in
nonlinear optical systems, shaken convection and in liquid crystals (see
references in \cite{Arg-Io}) . In spite of the lack of translation symmetry
(in contrast to periodic patterns), the solutions are $\pi/q$-rotation invariant for some integer $q$ (most often observed, $2q$ is $8$, $10$
or $12$).

In many of these experiments, the domain is large compared to the size of
the pattern, and the boundaries appear to have little effect. Furthermore,
the pattern is usually formed in two directions ($x_1$ and $x_2$), while the
third direction ($z$) plays little role. Mathematical models of the
experiments are therefore often posed with two unbounded directions, and the
basic symmetry of the problem is $E(2)$, the Euclidean group of rotations,
translations and reflections of the $(x_{1},x_{2})$ plane.

The above model equation is the simplest pattern-forming PDE, and is
extremely successful for describing primary bifurcations (the first symmetry breaking) of hydrodynamical
instability problems such as the Rayleigh - B\'enard convection. Its
essential properties are that

i) the system is invariant under the group $E(2)$;

ii) the instability occurs for a certain critical value of the parameter
(here $\la=0$) for which critical modes are given by wave vectors sitting on
a circle of \emph{non zero radius} (here the unit circle);

iii) the linear part is selfadjoint and contains the main derivatives.

In contrast to periodic patterns, quasipatterns do not fit into any
spatially periodic domain and have Fourier expansions with wavevectors that
live on a \emph{quasilattice} (defined below). At the onset of pattern
formation, the critical modes have zero growth rate but there are other
modes on the quasilattice that have growth rates arbitrarily close to zero,
and techniques that are used for periodic patterns cannot be applied. These
small growth rates appear as \emph{small divisors}, as seen below, and
correspond at criticality ($\la 
=0) $ to the fact that for the linearized operator at the origin (here $%
-(1+\Delta )^{2}$), {\bf the 0 eigenvalue is not isolated in the spectrum,
appearing as part of the continuous spectrum}.

If a formal computation in powers of $\sqrt{\la}$ is performed in this case without regard to its
validity, this results in a possibly divergent power series in the
parameter, and this approach does not lead to the existence of quasipattern
solutions, but instead to approximate solutions up to an exponentially small
error ~\cite{iooss-rucklidge}.

In this work, \textbf{we prove the existence of quasipattern solutions of
the steady Swift-Hohenberg equations}. Our result rests on the article \cite%
{iooss-rucklidge} by G. Iooss and A.M. Rucklidge which settle the
mathematical foundation of the problem such as the formulation of suitable
functions spaces. We refer to the articles of Rucklidge \cite%
{rucklidge-rucklidge,rucklidge-siber} and Iooss-Rucklidge\cite%
{iooss-rucklidge} for physical motivation as well as for the bibliography. 
\begin{figure}[tbp]
\begin{center}
\includegraphics[width=0.9\hsize]{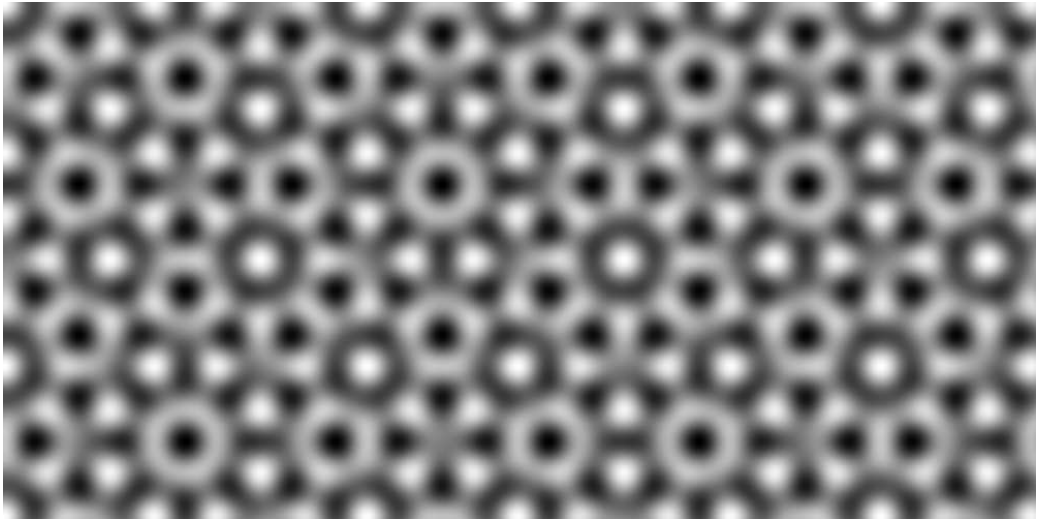}
\end{center}
\caption{Example $8$-fold quasipattern after \protect\cite{iooss-rucklidge}.
This is an approximate solution of the steady Swift--Hohenberg equation~(%
\protect\ref{steadySH}) with $\la=0.1$, computed by using Newton iteration
to find an equilibrium solution truncated to wavenumbers satisfying $|%
\mathbf{k}|\leq \protect\sqrt{5}$ and to the quasilattice $\Gamma _{27}$
obtained with $N_{\mathbf{k}}\leq 27$.}
\end{figure}

\subsection{Main result and sketch of the proof}

The problem is to find a special kind of solutions defined on $\mathbb{%
\mathbb{R}
}^{2}$ of the steady Swift-Hohenberg equation 
\begin{equation}
(1+\Delta )^{2}U-\lambda U+U^{3}=0.  \label{steadySH}
\end{equation}
The parameter $\lambda$ is supposed to be real and small in absolute value. The solutions we are interested in should tend towards zero as the parameter goes to zero.

We study equation (\ref{steadySH}) for $\lambda >0$. Namely, let $Q=2q$ be
an even integer and let $\mathbf{k}_{j}=\exp \frac{i\pi (j-1)}{q}$, $%
j=1,\ldots ,2q$ be the $2q$ unit vectors of the plane, identified with roots
of unity. Let $\Gamma $ be the set of linear combinations of vectors $%
\mathbf{k}_{j}$ with nonnegative integer coefficients. We look for the
existence of a (nonzero) $\pi /q$-rotation invariant solution of the form 
\begin{equation*}
U(\mathbf{x})=\sum_{\mathbf{k}\in \Gamma }u^{(\mathbf{k})}e^{i\mathbf{k}.%
\mathbf{x}}
\end{equation*}%
which belongs to a \textquotedblleft Sobolev\textquotedblright\ like space $%
\mathcal{H}_{s}$, $s\geq 0$~: 
\begin{equation*}
\Vert U\Vert _{s}^{2}:=\sum_{\mathbf{k\in \Gamma }}|u^{(\mathbf{k)}%
}|^{2}(1+N_{\mathbf{k}}^{2})^{s}<+\infty .
\end{equation*}%
The natural number $N_{\mathbf{k}}$ denotes the minimal length of the linear
combinations of the $\mathbf{k}_{j}$'s needed to reach $\mathbf{k}$.

We then show that such a solution exists indeed, for small positive
parameters $\lambda $:

\begin{theorem}
For any $q\geq 4$ and any $s>q/2$, there exists $\lambda _{0}>0,$ such that
the steady Swift-Hohenberg equation for $0<\lambda <\lambda _{0}$, admits a
quasipattern solution in $\mathcal{H}_{s}$, invariant under rotations of
angle $\pi /q.$ Moreover the asymptotic expansion of this solution is given
by the formal series given in \cite{iooss-rucklidge}.
\end{theorem}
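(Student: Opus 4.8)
The plan is to realise the quasipattern as a small, quantitatively controlled perturbation of a truncated formal solution, and to control the perturbation by a Nash--Moser scheme built over the quasilattice $\Gamma$.

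\emph{Approximate solution.} The bifurcating branch has amplitude of order $\sqrt{\lambda}$, so I would start from the formal power series $U\sim\sum_{n\ge 1}\lambda^{n/2}U_{n}$ of \cite{iooss-rucklidge} (only odd $n$ occur, by the symmetry $U\mapsto -U$), where each $U_{n}$ is a $\pi/q$--rotation invariant trigonometric polynomial supported in $\{N_{\mathbf{k}}\le n\}$, together with the Gevrey-type bounds on the $U_{n}$. Optimal truncation of this (generally divergent) series produces, for each small $\lambda>0$, a function $U^{\mathrm{app}}_{\lambda}\in\mathcal{H}_{s}$, invariant under rotations of angle $\pi/q$, such that
\begin{equation*}
\mathcal{F}(U^{\mathrm{app}}_{\lambda}):=(1+\Delta)^{2}U^{\mathrm{app}}_{\lambda}-\lambda U^{\mathrm{app}}_{\lambda}+(U^{\mathrm{app}}_{\lambda})^{3}=r_{\lambda},\qquad \|r_{\lambda}\|_{s}=O\big(e^{-c/\sqrt{\lambda}}\big),
\end{equation*}
and whose asymptotic expansion is, by construction, the formal series. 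I then look for a genuine solution $U=U^{\mathrm{app}}_{\lambda}+v$ in the closed subspace $\mathcal{H}^{\mathrm{sym}}_{s}\subset\mathcal{H}_{s}$ of real, $\pi/q$--rotation invariant functions, which $\mathcal{F}$ preserves (this suppresses the would-be kernel of the linearisation except for the amplitude direction already carried by $U^{\mathrm{app}}_{\lambda}$); recall that $s>q/2$ makes $\mathcal{H}_{s}$ a Banach algebra, so the cubic nonlinearity acts tamely.

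\emph{The linearised operator and the Diophantine estimate.} Subtracting, $v$ must solve
\begin{equation*}
\mathcal{A}_{\lambda}v+3U^{\mathrm{app}}_{\lambda}v^{2}+v^{3}=-r_{\lambda},\qquad \mathcal{A}_{\lambda}:=(1+\Delta)^{2}-\lambda+3(U^{\mathrm{app}}_{\lambda})^{2}.
\end{equation*}
In the Fourier variables on $\Gamma$ the leading part of $\mathcal{A}_{\lambda}$ is the diagonal multiplier with entries $d_{\mathbf{k}}=(1-|\mathbf{k}|^{2})^{2}-\lambda$, perturbed by multiplication by the real function $3(U^{\mathrm{app}}_{\lambda})^{2}$ of size $O(\lambda)$. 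Since $\Gamma$ is dense and $|\mathbf{k}|$ approaches $1$, the $d_{\mathbf{k}}$ accumulate at $0$: this is the small-divisor obstruction. The arithmetic input I would prove is that, for $\mathbf{k}\in\Gamma$ outside the critical set $\{|\mathbf{k}|=1\}$,
\begin{equation*}
\big|\,(1-|\mathbf{k}|^{2})^{2}\,\big|\;\ge\; c\,N_{\mathbf{k}}^{-\tau},\qquad \tau=\tau(q),
\end{equation*}
because $|\mathbf{k}|^{2}-1$ is a nonzero algebraic integer of the maximal real subfield of $\mathbb{Q}(e^{i\pi/q})$, so the product of its Galois conjugates is a nonzero rational integer, at least $1$ in modulus, while each conjugate is $O(N_{\mathbf{k}}^{2})$; the hypothesis $q\ge 4$ (a genuine quasilattice, cyclotomic field of degree $>2$) is what makes this meaningful, and is where the non-degeneracy used below originates.

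\emph{Inverting $\mathcal{A}_{\lambda}$, and the main obstacle.} This is the core. I would split $\Gamma$ into \emph{non-resonant} modes, where $(1-|\mathbf{k}|^{2})^{2}\gg\lambda$, so that $\mathcal{A}_{\lambda}$ is a small relative perturbation of an invertible diagonal operator and is inverted by a Neumann series there, with a loss of at most $N_{\mathbf{k}}^{\tau}$ derivatives and a factor polynomial in $1/\lambda$; and \emph{resonant} modes, those with $|\mathbf{k}|^{2}$ within $O(\sqrt{\lambda})$ of $1$ (this includes the $\mathbf{k}_{j}$ and is an \emph{infinite}, sparse set). On the resonant block $d_{\mathbf{k}}=O(\lambda)$ is comparable to the potential, so $\mathcal{A}_{\lambda}$ there is $\lambda(\mathrm{Id}+B_{\lambda})$ with $B_{\lambda}$ bounded; one must show $\mathrm{Id}+B_{\lambda}$ is invertible with controlled norm, using the non-degeneracy of the bifurcation (nonzero cubic normal-form coefficient). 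A Schur-complement/Lyapunov--Schmidt coupling of the two blocks then yields a tame solution operator, $\|\mathcal{A}_{\lambda}^{-1}g\|_{s}\le C\lambda^{-a}\|g\|_{s+\tau}$. The hard part is precisely this inversion: not the Diophantine bound itself, but the fact that the divisors $(1-|\mathbf{k}|^{2})^{2}-\lambda$ mix the arithmetic of $\Gamma$ with the parameter $\lambda$ — which is why the resonant block is infinite-dimensional and why one wants to avoid excising $\lambda$ — so one has to prove that the quasiperiodic potential $3(U^{\mathrm{app}}_{\lambda})^{2}$ produces no cancellation on that block, uniformly as $\lambda\to 0^{+}$.

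\emph{Nash--Moser loop and conclusion.} Since $\mathcal{A}_{\lambda}^{-1}$ loses $\tau$ derivatives, I would solve the equation for $v$ by a smoothed Newton iteration: with $\Pi_{N_{n}}$ the projection onto $\{N_{\mathbf{k}}\le N_{n}\}$ and $N_{n}=N_{0}^{(3/2)^{n}}$, at step $n$ invert the linearisation at $U^{\mathrm{app}}_{\lambda}+v_{n}$ — a perturbation of $\mathcal{A}_{\lambda}$ by $O(\|v_{n}\|_{s})$, to which the same block splitting applies — and bound the new residual by a quadratic term times the divisor loss. The exponential smallness of $\|r_{\lambda}\|_{s}$ dominates the powers of $1/\lambda$ and of $N_{n}$ produced at each step, giving super-exponential convergence in $\mathcal{H}^{\mathrm{sym}}_{s}$; the limit $v$ solves the equation, $U=U^{\mathrm{app}}_{\lambda}+v$ is the desired quasipattern in $\mathcal{H}_{s}$, and since $\|v\|_{s}$ is smaller than every power of $\sqrt{\lambda}$, its asymptotic expansion coincides with the formal series of \cite{iooss-rucklidge}.
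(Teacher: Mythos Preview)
Your scheme is more elaborate than necessary, and it misses the structural fact that lets the paper avoid Nash--Moser entirely. The paper's key observation is that the linearisation $\mathcal{L}_\epsilon = (1+\Delta)^2 - \lambda_\epsilon + 3U_\epsilon^2$ has a \emph{bounded} inverse on $\mathcal{H}_s$, with $\|\mathcal{L}_\epsilon^{-1}\|_s \le c(s)/\epsilon^2$ and \emph{no loss of derivatives}. This is obtained via an $\epsilon$-dependent orthogonal splitting $\mathcal{H}_s = E_0 \oplus E_1 \oplus E_2$ (far from the unit circle / near the circle but away from the $\mathbf{k}_j$ / in small discs around the $\mathbf{k}_j$), a Lyapunov--Schmidt reduction to $E_2$, and then the decisive point: on rotation-invariant functions the reduced operator $\Lambda_\epsilon$ is block-diagonal in $2q\times 2q$ blocks indexed by $\mathbf{k}'\in\Sigma_1$, with eigenvalues $(|\mathbf{k}'+\mathbf{k}_j|^2-1)^2 + 3\epsilon^2 + O(\epsilon^4) \ge 2\epsilon^2$. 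The constant $3$ is the (positive) mean of $a=3u_0^2-\lambda_2$; this positivity is exactly the ``non-degeneracy of the bifurcation'' you invoke, but it does more than you claim --- it shifts the whole spectrum strictly above $0$, so the small divisors simply disappear for $\epsilon>0$. With $\|\mathcal{L}_\epsilon^{-1}\|_s = O(\epsilon^{-2})$ in hand, a plain contraction/implicit-function argument closes, and a finite truncation $U_\epsilon=\epsilon u_0+\epsilon^3 u_1+\epsilon^5 u_2$ suffices rather than optimal truncation with exponentially small remainder.

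Your resonant-block step --- ``one must show $\mathrm{Id}+B_\lambda$ is invertible with controlled norm'' --- is exactly where the work lies, and you have not supplied a mechanism on that infinite-dimensional block. The paper's mechanism is the finite-block decomposition of $E_2$ afforded by the rotation symmetry together with the explicit eigenvalue computation. If you carry that out you will find the inverse is bounded in $\mathcal{H}_s$, your claimed $\tau$-derivative loss on the non-resonant part evaporates (the $\epsilon$-dependent cutoff gives a divisor $\ge C^2\epsilon$ there, not $N_{\mathbf{k}}^{-\tau}$), and the Nash--Moser loop becomes superfluous. The paper itself remarks that had the $\epsilon^2$-coefficient come out negative, a Nash--Moser scheme of the type you sketch would indeed be required; so your outline is the right template for the harder problem, but overkill for this one.
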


One of the main difficulty is that the linearized operator at $W=0$, has an unbounded inverse. Indeed, it is easy to show that the eigenvalues of $(1+\De)^2$ in ${\cal H}_s$ are $(1-|\bk|^2)^2$ where $\bk\in\Ga$. These numbers accumulate in $0$. It creates a {\it small divisor} problem, such that if $\la>0$ nothing can be said {\it a priori} about $(1+\De)^2-\la\text{Id}$. We use the first terms of the asymptotic expansion of the solution and change the unknown as $U=U_{\ep}+\ep^4 W$ and $\la=\ep^2(\la_2+\la_4\ep^2)$ for some well chosen $U_{\ep}$ and positive $\la_2$. Let ${\cal L}_{_\ep}$ be the linear part at $W=0$ of the nonlinear equation so obtained. 
For $\epsilon=0$, the operator $\mathcal{L}_0 =(1+\Delta)^2$ is a positive selfadjoint operator in $\mathcal{H}_s$. It is bounded from $\mathcal{H}_{s+4}$ into $\mathcal{H}_s$, but it is not Fredholm, since its range is not closed. Its spectrum is an \emph{essential spectrum} filling the half line 
$[0,\infty)$. The set of eigenvalues is dense in the spectrum. The linear operator  $\cal L_{\ep}$ is the sum of $\mathcal{L}_0$ and a bounded operator (multiplication by a small function $O(\ep^2)$) selfadjoint in $\mathcal{H}_0$. If the range of $\mathcal{L}_{\ep}$ were closed, a usual way to estimate the inverse of the selfadjoint operator $\mathcal{L}_{\ep}$ in $\mathcal{H}_0$, would be to estimate the distance from $0$ to its numerical range (see \cite{kato-book}) (containing the spectrum). Such estimate as
\begin{equation*}
{ \langle\cal L_{\ep}U,U\rangle}_0 \geq c \ep^2||U||_{0}^{2}
\end{equation*}
for a certain constant $c>0$, cannot be proved here. So we need to study the linear operator in more details.

We show that there exists an orthogonal decomposition (depending on $\ep$) of the space ${\cal H}_s=E_0\oplus E_1\oplus E_2$, $s>q/2$, such that the solution of the equation ${\cal L}_{\ep}U=f$ in ${\cal H}_s$ can be computed and estimated from its $E_2$-component $U_2$. The latter is solution of a linear equation ${\cal L}^{(2)}_{\ep}U_2=\tilde f$. The main part (with respect to powers of $\ep$) of that operator is an operator $\Lambda _{\epsilon }$. When restricted to rotation invariant elements, it has a block-diagonal structure of fixed finite dimensional blocks. Then, it is possible to estimate all eigenvalues of these selfadjoint blocks. These eigenvalues have the form
$$
(|\bk|^2-1)^2+3\ep^2+O(\ep^4)
$$
for $\bk\in \Ga$. One of the main feature is that {\bf they do not accumulate at the origin}, and
\emph{despite of the small divisor problem} arising for $\epsilon =0$, we are able to give an upper bound in $\mathcal{H}_0$ of the inverse of $\Lambda _{\epsilon }$  of order $1/\lambda $ for nonzero $\lambda $.
Then we extend this estimate in $\mathcal{H}_{s}$ for $s>q/2$.
Finally, we use a variant of the implicit function theorem to conclude to the existence of quasipatterns solutions.

\begin{remark}
If the coefficient (3) of $\ep^2$ were negative, then the operator could have again {\it small divisors}. Then the proof of the existence would have been much more involved. At that point, solving the nonlinear problem in $U_2$ would have required the use of a version of Nash-Moser theorem such as those developed by J. Bourgain, W. Craig, M. Berti and al. (see for instance  \cite{bourgain,craig-panorama,berti-bolle-abstract, berti-book}). Their main feature is the use of the good separation property of the ``singular sites'' of the main linear operator. Indeed, we can show that the operator $(1+\De)^2$ does have this property.
\end{remark}
\begin{remark}
For $\lambda <0$ the solution $U=0$ is isolated in an open ball of radius $%
\sqrt{|\lambda |}$, as it is easily deduced from the following estimate 
\begin{equation*}
||[(1+\Delta )^{2}-\lambda ]^{-1}||\leq |\lambda |^{-1}
\end{equation*}%
which holds in $\mathcal{H}_s$.
\end{remark}
\section{Analysis of the main part of the differential}

\subsection{Setting}

In this section, we recall and improve some of the properties of the function spaces we use, as defined in \cite{iooss-rucklidge}.

Let $Q=2q$, $q\geq 4$ being an integer. Let us define the unit {\it wave vectors} (identifying $\mathbb{C}$ with $\mathbb{R}^2$)
\beq
\bk_j:=e^{i\pi\frac{j-1}{q}},\quad j=1,\ldots, 2q.
\eeq

We define the {\bf quasilattice} $\Ga\subset \Bbb R^2$ to be the set of points spanned by (nonnegative) integer linear combinations of the $k_j$'s : 
\beq\label{defkm}
\bk_m=\sum_{j=1}^{2q}m_j\bk_j,\quad \bm=(m_1,\ldots,m_{2q})\in \Bbb N^{2q}.
\eeq
We have $\bk_j=-\bk_{j+q}$. Hence, we can write 
$$
\bk_m=\sum_{j=1}^{q}m_j' \bk_j,
$$
where, $m_j':=m_j-m_{j+q}$ belongs to $\Bbb Z$. Thus, 
$$
|\bk_m|^2=\sum_{i,j=1}^{q}m_i'm_j'<\bk_i,\bk_j>.
$$
We then define, for any $\bm\in \Bbb N^{2q}$ and $\bk\in \Ga$, 
$$
|\bm|:=\sum_jm_j,\quad N_{\bk}:=\min\{|\bm| : \bk=\bk_m\}.
$$
We have 
\begin{lemma}\cite{iooss-rucklidge}[Lemma 4.1]\label{lem-iooss}
For any $\bk\in \Ga$, we have~:
\begin{itemize}
\item \beq\label{N1} N_{\bk+\bk'}\leq N_{\bk}+N_{\bk'},\quad N_{-\bk}=N_{\bk}\eeq
\item \beq\label{N2} |\bk|\leq N_{\bk}\eeq
\item \beq\label{N3} \card\{\bk\;|\; N_{\bk}=N\}\leq c_1(q)N^{q-1}\eeq
for some constant $c_1(q)$ depending only on $q$.
\end{itemize}
\end{lemma}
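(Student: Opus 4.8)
The plan is to read all three items directly off the definition $N_{\bk}=\min\{\,|\bm| : \bm\in\N^{2q},\ \bk=\bk_m\,\}$, using only that $\bm\mapsto\bk_m$ is additive in $\bm$, that $|\bk_j|=1$, and the relation $\bk_{j+q}=-\bk_j$ (indices read modulo $2q$).

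For \refeq{N1} and \refeq{N2}: to get subadditivity I would pick $\bm,\bm'\in\N^{2q}$ realizing $N_{\bk}$ and $N_{\bk'}$; then $\bm+\bm'\in\N^{2q}$, $\bk_{\bm+\bm'}=\bk+\bk'$ and $|\bm+\bm'|=|\bm|+|\bm'|$, whence $N_{\bk+\bk'}\le N_{\bk}+N_{\bk'}$. For $N_{-\bk}=N_{\bk}$, let $\sigma$ be the permutation of $\{1,\dots,2q\}$ sending $j$ to $j+q$ mod $2q$; it is an involution with $\bk_{\sigma(j)}=-\bk_j$ for every $j$, so $\bm\mapsto\tilde\bm$ with $\tilde m_j:=m_{\sigma(j)}$ is a $|\cdot|$-preserving bijection of $\N^{2q}$ carrying each representation of $\bk$ to one of $-\bk$, and the two minima agree. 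For \refeq{N2}, every admissible $\bm$ gives $|\bk|=\big|\sum_j m_j\bk_j\big|\le\sum_j m_j|\bk_j|=|\bm|$, and minimizing over $\bm$ yields $|\bk|\le N_{\bk}$.

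Item \refeq{N3} is the only one needing a genuine count, and the key idea is to shrink the ``alphabet'' from $2q$ to $q$ letters. Using $\bk_{j+q}=-\bk_j$, any $\bm\in\N^{2q}$ with $\bk_m=\bk$ yields a reduced vector $(n_1,\dots,n_q)\in\Z^q$, $n_j:=m_j-m_{j+q}$, with $\sum_{j=1}^q n_j\bk_j=\bk$ and $\sum_j|n_j|\le|\bm|$; conversely any $(n_j)\in\Z^q$ lifts to some $\bm\in\N^{2q}$ with $|\bm|=\sum_j|n_j|$ by taking $m_j=\max(n_j,0)$ and $m_{j+q}=\max(-n_j,0)$. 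Hence
\[
N_{\bk}=\min\Big\{\ \textstyle\sum_{j=1}^{q}|n_j|\ :\ (n_j)\in\Z^q,\ \sum_{j=1}^{q}n_j\bk_j=\bk\ \Big\},
\]
and in particular $N_{\bk}=N$ forces $\bk$ to lie in the image of the $\ell^1$-sphere $S_N:=\{(n_j)\in\Z^q:\sum_j|n_j|=N\}$ under $(n_j)\mapsto\sum_j n_j\bk_j$ (if some representation had $\sum_j|n_j|<N$, lifting it would give $\bm$ with $|\bm|<N$, contradicting minimality). Therefore $\card\{\bk\in\Ga:N_{\bk}=N\}\le\card S_N$. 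It then remains to note that $\card\{(n_j)\in\Z^q:\sum_j|n_j|\le N\}=\sum_{k=0}^{q}2^k\binom qk\binom Nk$ is a polynomial in $N$ of degree $q$ with leading coefficient $2^q/q!$, so its first difference $\card S_N$ is a polynomial in $N$ of degree $q-1$; this gives the desired bound $\card\{\bk:N_{\bk}=N\}\le c_1(q)N^{q-1}$.

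The one place calling for care is the displayed identity for $N_{\bk}$, that is, checking that minimality is preserved when passing from representations in $\N^{2q}$ to reduced representations in $\Z^q$: one must be sure a length-minimizing $\bm$ projects onto $S_N$, the $\ell^1$-sphere of radius \emph{exactly} $N$, and not merely into the ball of radius $N$ --- the latter would only yield the weaker estimate $O(N^{q})$. The remaining steps are routine bookkeeping with binomial coefficients.
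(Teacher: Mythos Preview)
Your proof is correct. The paper itself does not prove this lemma at all: it is quoted verbatim from \cite{iooss-rucklidge} (their Lemma~4.1) and used as a black box, so there is no in-paper argument to compare against. Your treatment of \refeq{N1} and \refeq{N2} is the standard one, and your approach to \refeq{N3} via the reduced $\Z^q$-representation and the $\ell^1$-sphere count is clean and gives the right exponent.

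One small remark on the concern you flag at the end: it is already resolved by your own displayed identity. Once you have established that $N_{\bk}$ equals the minimum of $\sum_j|n_j|$ over $\Z^q$-representations (both inequalities, via the projection/lift maps you describe), the minimizer over $\Z^q$ lands exactly on $S_N$ by definition of minimum, so there is nothing further to check. The potential pitfall you describe --- a length-minimizing $\bm\in\N^{2q}$ projecting to something with $\sum_j|n_j|<N$ --- is indeed impossible, since lifting that projection back would produce $\bm'\in\N^{2q}$ with $|\bm'|<N$, contradicting $N_{\bk}=N$; but you do not even need this observation once the two-sided identity is in hand. Your combinatorial formula $\sum_{k=0}^q 2^k\binom{q}{k}\binom{N}{k}$ for the $\ell^1$-ball count is correct (hockey-stick identity applied to the sphere counts $\sum_{k\ge1}2^k\binom{q}{k}\binom{N-1}{k-1}$), and its first difference is indeed a polynomial of degree $q-1$ in~$N$.
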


As in \cite{iooss-rucklidge}, we use function spaces defined as 
\begin{equation}
{\mathcal{H}}_{s}=\left\{ W=\sum_{\mathbf{k}\in \Gamma }W^{(\mathbf{k})}e^{i%
\mathbf{k}\cdot \mathbf{x}};\,||W||_{s}^{2}=\sum_{\mathbf{k}\in \Gamma
}(1+N_{\mathbf{k}}{}^{2})^{s}|W^{(\mathbf{k})}|^{2}<\infty \right\} ,
\label{defHs}
\end{equation}%
which are Hilbert spaces with the scalar product%
\begin{equation}
\langle W,V\rangle _{s}=\sum_{\mathbf{k}\in \Gamma }(1+N_{\mathbf{k}%
}{}^{2})^{s}W^{(\mathbf{k})}\overline{V}^{(\mathbf{k})}.
\label{scalarproduct}
\end{equation}%


\begin{lemma} \label{algebraa}
For $s>q/2$, for any $U\in \mathcal{H}_{s}$ and any $V\in 
\mathcal{H}_{0}$, we have%
\begin{equation*}
||UV||_{0}\leq c_{s}||U||_{s}||V||_{0}
\end{equation*}%
for a certain constant $c_{s}>0.$
\end{lemma}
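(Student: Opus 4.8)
The plan is to work directly with the quasi-Fourier expansions on the quasilattice. Write $U=\sum_{\mathbf{p}\in\Gamma}U^{(\mathbf{p})}e^{i\mathbf{p}\cdot\mathbf{x}}$ and $V=\sum_{\mathbf{q}\in\Gamma}V^{(\mathbf{q})}e^{i\mathbf{q}\cdot\mathbf{x}}$. Since $\mathbf{k}_{j+q}=-\mathbf{k}_j$, the set $\Gamma$ is stable under $\mathbf{k}\mapsto-\mathbf{k}$ and under addition, hence is a subgroup of $\mathbb{R}^2$; in particular $\mathbf{k}-\mathbf{p}\in\Gamma$ whenever $\mathbf{k},\mathbf{p}\in\Gamma$. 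Collecting in the product $UV$ the terms with a fixed total wave vector $\mathbf{k}$ gives $(UV)^{(\mathbf{k})}=\sum_{\mathbf{p}\in\Gamma}U^{(\mathbf{p})}V^{(\mathbf{k}-\mathbf{p})}$, which is a convolution over the group $\Gamma$. Putting $a_{\mathbf{p}}=|U^{(\mathbf{p})}|$ and $b_{\mathbf{q}}=|V^{(\mathbf{q})}|$, the triangle inequality gives the pointwise bound $|(UV)^{(\mathbf{k})}|\le(a*b)_{\mathbf{k}}$, hence $\|UV\|_0\le\|a*b\|_{\ell^2(\Gamma)}$.

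Next I would use the elementary convolution inequality $\|a*b\|_{\ell^2}\le\|a\|_{\ell^1}\,\|b\|_{\ell^2}$ on the discrete group $\Gamma$; this is just Minkowski's inequality, since $a*b=\sum_{\mathbf{p}}a_{\mathbf{p}}\,\tau_{\mathbf{p}}b$ and each shift $\tau_{\mathbf{p}}$ is an isometry of $\ell^2(\Gamma)$. As $\|b\|_{\ell^2}=\|V\|_0$, it only remains to bound $\|a\|_{\ell^1}=\sum_{\mathbf{p}\in\Gamma}|U^{(\mathbf{p})}|$ by $c_s\|U\|_s$. Inserting the weights $(1+N_{\mathbf{p}}^2)^{\pm s/2}$ and applying Cauchy--Schwarz,
\[
\sum_{\mathbf{p}\in\Gamma}|U^{(\mathbf{p})}|\le\Big(\sum_{\mathbf{p}\in\Gamma}(1+N_{\mathbf{p}}^2)^s|U^{(\mathbf{p})}|^2\Big)^{1/2}\Big(\sum_{\mathbf{p}\in\Gamma}(1+N_{\mathbf{p}}^2)^{-s}\Big)^{1/2}=c_s\,\|U\|_s ,
\]
where $c_s:=\big(\sum_{\mathbf{p}\in\Gamma}(1+N_{\mathbf{p}}^2)^{-s}\big)^{1/2}$. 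Combining the two steps gives the claim, provided $c_s<\infty$.

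Finally I would verify the finiteness of that series by grouping the points of $\Gamma$ according to the value of $N_{\mathbf{p}}$ and invoking the counting estimate (\ref{N3}) of Lemma~\ref{lem-iooss}: the number of $\mathbf{p}\in\Gamma$ with $N_{\mathbf{p}}=N$ is at most $c_1(q)N^{q-1}$ (for $N\ge1$), so
\[
\sum_{\mathbf{p}\in\Gamma}(1+N_{\mathbf{p}}^2)^{-s}= 1+\sum_{N\ge1}\card\{\mathbf{p}\in\Gamma : N_{\mathbf{p}}=N\}\,(1+N^2)^{-s}\le 1+\sum_{N\ge1}c_1(q)\,N^{q-1}(1+N^2)^{-s},
\]
and the right-hand side converges exactly when $2s-(q-1)>1$, i.e. when $s>q/2$. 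This is the only place the hypothesis on $s$ enters, and it is precisely what fixes the threshold $q/2$; everything else is soft. I do not expect a genuine obstacle here: the only points requiring care are the purely formal bookkeeping at the start (that $\Gamma$ is closed under subtraction, so $(UV)^{(\mathbf{k})}$ really is a convolution over a group), and the remark that all the sums converge absolutely a posteriori, since $a\in\ell^1(\Gamma)$ by the above and $b\in\ell^2(\Gamma)$, so that $UV$ is indeed a well-defined element of $\mathcal{H}_0$.
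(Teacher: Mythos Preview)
Your proof is correct and follows essentially the same route as the paper: both reduce the estimate to the finiteness of $\sum_{\mathbf{p}\in\Gamma}(1+N_{\mathbf{p}}^{2})^{-s}$, established via the counting bound~(\ref{N3}). The only cosmetic difference is packaging: you invoke the $\ell^{1}*\ell^{2}\to\ell^{2}$ convolution inequality (via Minkowski) and then bound $\|a\|_{\ell^{1}}$ by Cauchy--Schwarz, whereas the paper applies Cauchy--Schwarz directly to each coefficient $(UV)^{(\mathbf{k})}$ and then swaps the order of summation --- the two are standard rearrangements of the same computation.
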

\begin{proof}

Using Cauchy-Schwarz inequality, we have
\begin{eqnarray*}
\|UV\|_{0}^2 & \leq & \sum_{\bk\in \Ga}\left|\sum_{\bm\in \Ga}U^{(\bm)}V^{(\bk-\bm)}\right|^2\\
& \leq & \sum_{\bk\in \Ga}\left(\sum_{\bm\in \Ga}|U^{(\bm)}|^2(1+N_{\bm}^2)^{s}\right)
\left(\sum_{\bm'\in \Ga}|V^{(\bk-\bm')}|^2(1+N_{\bm'}^2)^{-s}\right)\\
& \leq & ||U||_{s}^2 \left(\sum_{\bk\in \Ga}|V^{(\bk-\bm')}|^2\right) R_{s}
\end{eqnarray*}
where $R_s:=\sum_{\bm' \in \Ga}(1+N_{\bm'}^2)^{-s}$. This last sum converges if $s>q/2$. Indeed, according to \cite{iooss-rucklidge}[(24)], $\card\{\bk\in\Ga\;|\;N_{\bk}=N\}\leq c(q)N^{q-1}$ for some constant $c(q)$.  Hence
$\|UV\|_{0}^2 \leq \|U\|_{s}^2 \|V\|_{0}^2R_s.$
\end{proof}

\begin{lemma}(Moser-Nirenberg type inequality)\label{algebrab}
Let $s,s'>q/2$ and let $U,V\in \cH_{s}\cap\cH_{s'}$. Then,
\beq\label{algebra}
\|UV\|_{s} \leq C(s,s')(\|U\|_{s}\|V\|_{s'}+\|U\|_{s'}\|V\|_{s})
\eeq
for some positive constant $C(s,s')$ that depends only on $s$ and $s'$. For $\ell \geq 0$ and $s>\ell +q/2$, ${\mathcal{H}}_{s}$ is
continuously embedded into $\mathcal{C}^{\ell }$
\end{lemma}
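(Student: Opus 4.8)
The plan is to prove the algebra-type estimate \eqref{algebra} by the standard device of splitting the Fourier support according to which factor carries the larger frequency $N_{\bk}$, and then to deduce the embedding into $\mathcal{C}^\ell$ from the case $V=1$ (or directly from summability of Fourier coefficients). First I would write $UV = \sum_{\bk}(UV)^{(\bk)}e^{i\bk\cdot\bk}$ with $(UV)^{(\bk)}=\sum_{\bm\in\Ga}U^{(\bm)}V^{(\bk-\bm)}$, and use the subadditivity $N_{\bk}\leq N_{\bm}+N_{\bk-\bm}$ from \eqref{N1} to get $(1+N_{\bk}^2)^{s/2}\leq C_s\big((1+N_{\bm}^2)^{s/2}+(1+N_{\bk-\bm}^2)^{s/2}\big)$. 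Splitting the sum over $\bm$ into the region $N_{\bm}\geq N_{\bk-\bm}$ (where the first term dominates, and $(1+N_{\bk-\bm}^2)^{s'/2}$ is a harmless extra factor one can insert and remove) and its complement, one bounds $(1+N_{\bk}^2)^{s/2}|(UV)^{(\bk)}|$ by a sum of two convolutions: one of $\{(1+N_{\bm}^2)^{s/2}|U^{(\bm)}|\}$ with $\{(1+N_{\bm}^2)^{-s'/2}\cdot(1+N_{\bm}^2)^{s'/2}|V^{(\bm)}|\}$, and the symmetric one with the roles of $U,V$ and $s,s'$ swapped.

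The key step is then a Young/Cauchy--Schwarz estimate entirely analogous to the proof of Lemma \ref{algebraa}: in each region, apply Cauchy--Schwarz in $\bm$, pulling out $\|U\|_s$ (resp.\ $\|V\|_{s'}$) and leaving a factor $R_{s'}:=\sum_{\bm\in\Ga}(1+N_{\bm}^2)^{-s'}<\infty$, which converges precisely because $s'>q/2$ by the counting bound \eqref{N3}; symmetrically one uses $R_s<\infty$ since $s>q/2$. Summing the resulting bound over $\bk\in\Ga$ and using $\|V\|_{s'}^2=\sum_{\bk}(1+N_{\bk}^2)^{s'}|V^{(\bk)}|^2$ (the convolution structure lets one sum in $\bk$ before the $\bm$-sum, as in Lemma \ref{algebraa}) gives $\|UV\|_s^2\leq C(s,s')^2\big(\|U\|_s\|V\|_{s'}+\|U\|_{s'}\|V\|_s\big)^2$, which is \eqref{algebra}.

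For the embedding statement, given $\ell\geq 0$ and $s>\ell+q/2$, I would estimate the $\mathcal{C}^\ell$ norm of $W=\sum_{\bk}W^{(\bk)}e^{i\bk\cdot\bk}$ by $\sum_{\bk\in\Ga}(1+|\bk|)^\ell |W^{(\bk)}|$, since each derivative up to order $\ell$ brings down a factor bounded by $(1+|\bk|)^\ell\leq (1+N_{\bk})^\ell$ by \eqref{N2}. Then Cauchy--Schwarz gives $\sum_{\bk}(1+N_{\bk})^\ell|W^{(\bk)}|\leq \|W\|_s\big(\sum_{\bk\in\Ga}(1+N_{\bk}^2)^{\ell-s}\big)^{1/2}$, and the last sum is finite because $s-\ell>q/2$, again by \eqref{N3}. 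This shows the series converges absolutely to a $\mathcal{C}^\ell$ function with $\|W\|_{\mathcal{C}^\ell}\leq C\|W\|_s$.

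The only genuine obstacle is bookkeeping: making the frequency split clean enough that in each region one really does have a bounded ($\ell^1$ or $\ell^2$) convolution kernel, and checking that inserting the auxiliary weights $(1+N_{\bm}^2)^{\pm s'/2}$ does not spoil the Cauchy--Schwarz step. This is routine once one mirrors the argument of Lemma \ref{algebraa}; no new ideas beyond \eqref{N1}, \eqref{N2}, \eqref{N3} are needed.
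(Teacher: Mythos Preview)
Your plan is correct and in fact more elementary than the paper's proof. You split according to $N_{\bm}\gtrless N_{\bk-\bm}$, transfer the weight $(1+N_{\bk}^2)^{s/2}$ to the dominant factor via \eqref{N1}, and then run the Cauchy--Schwarz of Lemma~\ref{algebraa} with the auxiliary weight $(1+N_{\bk-\bm}^2)^{\pm s'/2}$; after summing over $\bk$ this yields $R_{s'}\,\|U\|_s^2\|V\|_{s'}^2$ for one piece and the symmetric term for the other. The paper instead follows \cite{iooss-rucklidge} and handles the high--low piece (your region $N_{\bm}>3N_{\bk-\bm}$) via a dyadic Littlewood--Paley decomposition $u=\sum_p \Delta_p u$ and paraproduct-type estimates, reducing to $\sum_p 2^{2ps}\|\Delta_p u\|_0^2$. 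Your direct weighted-$\ell^2$ argument avoids dyadic blocks entirely and is self-contained; the paper's route is the standard paraproduct machinery, which is heavier here but transfers more readily to settings where the weights are not pure powers. The embedding into $\mathcal{C}^\ell$ is not proved in the paper; your argument via \eqref{N2}, Cauchy--Schwarz, and \eqref{N3} is the expected one.
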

We postpone the proof to the appendix.

\subsection{Formal computation}

\label{sec:formal}

Let us look for formal solutions of the steady Swift--Hohenberg equation 
\begin{equation}
\lambda U-(1+\Delta )^{2}U-U^{3}=0,  \label{eq:sh}
\end{equation}%
We characterise the functions of interest by their Fourier coefficients on
the \emph{quasilattice} $\Gamma $ generated by the $2q$ equally spaced unit
vectors $\mathbf{k}_{j}$ (see (\ref{defkm})):%
\begin{equation*}
U(\mathbf{x})=\sum_{\mathbf{k}\in \Gamma }u^{(\mathbf{k})}e^{i\mathbf{k}.%
\mathbf{x}},\text{ \ }\mathbf{x}=(x_{1},x_{2})\in 
\mathbb{R}
^{2}.
\end{equation*}

We seek a non trivial solution, bifurcating from $0$, parameterized by $%
\epsilon ,$ and which is invariant under rotations by $\pi /q$. As it is
shown for example in \cite{iooss-rucklidge}, a formal computation with
identification of orders in $\epsilon $ leads to 
\begin{equation}
U(x_{1},x_{2})=\epsilon u_{0}(x_{1},x_{2})+\epsilon
^{3}u_{1}(x_{1},x_{2})+\dots \qquad \lambda =\epsilon ^{2}\lambda
_{2}+\epsilon ^{4}\lambda _{4}+\dots  \label{eq:formal}
\end{equation}

and gives at order $\mathcal{O}(\epsilon )$ 
\begin{equation}
0=(1+\Delta )^{2}u_{0}.  \label{order1}
\end{equation}%
We take as our basic solution a quasipattern that is invariant under
rotations by $\pi /q$: 
\begin{equation}
u_{0}=\sum_{j=1}^{2q}e^{i\mathbf{k}_{j}\cdot \mathbf{x}}.  \label{U1}
\end{equation}

At order $\mathcal{O}(\epsilon ^{3})$ we have 
\begin{equation}
\lambda _{2}u_{0}-u_{0}^{3}=(1+\Delta )^{2}u_{1}.  \label{order3}
\end{equation}%
In order to solve this equation for $u_{1}$, we must impose a solvability
condition, namely that the coefficients of $e^{i\mathbf{k}_{j}\cdot \mathbf{x%
}}$, \ $j=1,...,2q$ on the left hand side of this equation must be zero.
Because of the invariance under rotations by $\pi /q,$ it is sufficient to
cancel the coefficient of $e^{i\mathbf{k}_{1}\cdot \mathbf{x}}.$ For the
computation of the coefficient, we need the following property

\textbf{Property: }\emph{If we have} 
\begin{equation*}
\mathbf{k}_{j}+\mathbf{k}_{l}+\mathbf{k}_{r}+\mathbf{k}_{s}=0\text{ for }%
j,l,r,s\in \{1,2q\}
\end{equation*}%
\emph{then either} $\mathbf{k}_{j}+\mathbf{k}_{l}=0,$ \emph{or} $\mathbf{k}%
_{j}+\mathbf{k}_{r},$ \emph{or} $\mathbf{k}_{j}+\mathbf{k}_{s}=0$ \emph{\
(there are two pairs of opposite unit vectors).}

\begin{proof} Since there are 4 unit vectors on the unit circle, we can assume without restriction, that $\mathbf{k}_{j}$ and 
$\mathbf{k}_{l}$ make an angle $2\theta \leq \pi /2.$ Then $|\mathbf{k}_{j}+\mathbf{k%
}_{l}|=2\cos \theta \geq \sqrt{2}.$ It results that $|\mathbf{k}_{r}+\mathbf{%
k}_{s}|=2\cos \theta $ with $\mathbf{k}_{r}$ and $\mathbf{k}_{s}$ symmetric
with respect to the direction of the bissectrix of $(\mathbf{k}_{j},\mathbf{k%
}_{l}),$ making the same angle as $\mathbf{k}_{j}$ and $\mathbf{k}_{l}$ with
the bissectrix.\ So $\{\mathbf{k}_{r}$, $\mathbf{k}_{s}\}$ is the symmetric
with respect to 0 of $\{\mathbf{k}_{j}$, $\mathbf{k}_{l}\}.$
\end{proof}

This yields 
\begin{equation}
\lambda _{2}=3(2q-1)  \label{mu2}
\end{equation}%
\emph{which is strictly positive}. Moreover 
\begin{eqnarray}
u_{1} &=&\sum_{\mathbf{k}\in \Gamma ,N_{\mathbf{k}}\neq 1,N_{\mathbf{k}}\leq 3}\alpha _{\mathbf{k}%
}e^{i\mathbf{k}\cdot \mathbf{x}},  \label{u1} \\
\alpha _{3\mathbf{k}_{j}} &=&-1/64,\text{ \ }\alpha _{2\mathbf{k}_{j}+%
\mathbf{k}_{l}}=-\frac{3}{(1-|2\mathbf{k}_{j}+\mathbf{k}_{l}|^{2})^{2}},%
\text{ \ }\mathbf{k}_{j}+\mathbf{k}_{l}\neq 0,  \notag \\
\alpha _{\mathbf{k}_{j}+\mathbf{k}_{l}+\mathbf{k}_{r}} &=&-\frac{6}{(1-|%
\mathbf{k}_{j}+\mathbf{k}_{l}+\mathbf{k}_{r}|^{2})^{2}},\text{ }j\neq l\neq
r\neq j,  \notag \\
\mathbf{k}_{j}+\mathbf{k}_{l} &\neq &0,\mathbf{k}_{j}+\mathbf{k}_{r}\neq 0,%
\mathbf{k}_{r}+\mathbf{k}_{l}\neq 0.  \notag
\end{eqnarray}%
We notice that for any $\mathbf{k}$, $\alpha _{\mathbf{k}}<0$ in $u_{1}$.
At order $\mathcal{O}(\epsilon ^{5})$ we have 
\begin{equation}
\lambda _{4}u_{0}+\lambda _{2}u_{1}-3u_{0}^{2}u_{1}=(1+\Delta )^{2}u_{2}.
\label{order5}
\end{equation}%
The solvability condition gives $\lambda _{4}$ equal to the coefficient of $%
e^{i\mathbf{k}_{1}\cdot \mathbf{x}}$ in $3u_{0}^{2}u_{1}$. From the
expression (\ref{u1}) of $u_{1}$ 
\begin{equation*}
\lambda _{4}=\sum_{\mathbf{k}_{j}+\mathbf{k}_{l}+\mathbf{k}=\mathbf{k}_{1},%
\text{ }N_{\mathbf{k}}=3}\alpha _{\mathbf{k}}
\end{equation*}%
where all coefficients $\alpha _{\mathbf{k}}$ are negative, it results that%
\begin{equation}
\lambda _{4}<0.  \label{mu4}
\end{equation}%

\subsection{Formulation of the problem}

Let us define the new unknown function $W$ in rewriting (\ref{eq:formal})
as: 
\begin{eqnarray}
U &=&U_{\epsilon }+\epsilon ^{4}W  \notag \\
U_{\epsilon } &=&\epsilon u_{0}+\epsilon ^{3}u_{1}+\epsilon
^{5}u_{2}  \label{eq:perturbed} \\
\lambda _{\epsilon } &=&\epsilon ^{2}\lambda _{2}+\epsilon ^{4}\lambda
_{4}  \notag
\end{eqnarray}%
where $u_{0}$, $u_{1}$, $u_{2},$ $\lambda _{2}$, $\lambda _{4},$ are as above. Given a particular (small) positive value of $%
\lambda $, we get $\epsilon ^{2}$ by the implicit function theorem, and
since $\lambda _{2}>0$, we obtain a unique positive $\epsilon $. All the
corrections are in $W$. The aim is to show that the quasi-periodic function $%
W$ exists and is small as $\epsilon $ tends towards $0$. By construction we
have%
\begin{equation*}
\lambda _{\epsilon }U_{\epsilon }-(1+\Delta )^{2}U_{\epsilon }-U_{\epsilon
}^{3}=:-\epsilon ^{7}f_{\epsilon }
\end{equation*}%
where $f_{\epsilon }$ is quasi-periodic, of order $\mathcal{O}(1)$ with a
finite expansion, and is function of $\epsilon ^{2}$. After substituting (%
\ref{eq:perturbed}) into the PDE (\ref{eq:sh}), we obtain an equation of the
form

\begin{equation*}
\mathcal{F}(\epsilon ,W)=0,
\end{equation*}%
with 
\begin{equation}
\mathcal{F}(\epsilon ,W)=:\mathcal{L}_{\epsilon }W+\epsilon ^{3}f_{\epsilon
}+3\epsilon ^{4}U_{\epsilon }W^{2}+\epsilon ^{8}W^{3},  \label{basic-equW}
\end{equation}%
where 
\begin{eqnarray}
\mathcal{L}_{\epsilon } &=&(1+\Delta )^{2}-\lambda _{\epsilon }+3U_{\epsilon
}{}^{2}=:{\mathfrak{L}}_{\epsilon }+\epsilon ^{6}\mathcal{P}_{\epsilon }, 
\label{eq:Lepsi} \\
{\mathfrak{L}}_{\epsilon } &=&(1+\Delta )^{2}+\epsilon ^{2}a+\epsilon ^{4}b,
\label{delL_ep} \\
a &=&3u_{0}^{2}-\lambda _{2},\text{ \ }b=6u_{0}u_{1}-\lambda _{4},  \notag \\
\mathcal{P}_{\epsilon } &=&6u_{0}u_{2}+3(u_{1}+\epsilon ^{2}u_{2})^{2}.  \notag
\end{eqnarray}%
\begin{remark}\label{degree-truncation}
The degree of truncation, that is the degree in $\epsilon$ in the expansion of $U_{\ep}$, is chosen so that the power of $\ep$ in front of both $W^2$ and $f_{\ep}$ are greater than $2$. This is crucial for the very last step of the proof.
\end{remark}

It is clear that the operator $\mathcal{P}_{\epsilon }$ is an operator
bounded in any $\mathcal{H}_{r}$, $r\geq 0$ uniformly bounded in $\epsilon $%
, for $\epsilon \leq \epsilon _{0}$.

A nice property of the operator ${\mathfrak{L}}_{\epsilon }$ is that the
averages $a_{0}$ and $b_{0}$ of $a$ and $b$ are strictly $>0$. Indeed, we
have for any $q$ 
\begin{equation*}
a_{0}=3,\text{ \ \ \ }b_{0}=-\lambda _{4}>0.
\end{equation*}%
For $a_{0}$ this results from (\ref{mu2}) and a simple examination of $%
u_{0}^{2}$ the average of which is $2q,$ and for $b_{0}$ we observe that the
average of $u_{0}u_{1}$ is 0, due to the form of $u_{1}.$

Assume that we could prove that ${\mathcal{L}}_{\epsilon }^{-1}$ is $%
\mathcal{O}(\epsilon ^{-2})$. Then, provided that we are in ${\mathcal{H}}%
_{s},s>q/2$ which is a Banach algebra, we should get from (\ref{basic-equW}) 
\begin{equation*}
W=\mathcal{O}(\epsilon)+\mathcal{O}(\epsilon ^{2}||W||^{2})
\end{equation*}%
The standard implicit function theorem then would allow to conclude and to
get $W=\mathcal{O}(\epsilon)$.

In fact, \emph{it is not expected that the operator $\mathcal{L}_{\epsilon }$ has
a bounded inverse, due to the small divisor problem mentioned in section
1.1}. Notice in particular that it is shown in \cite{iooss-rucklidge} that
there exists $c>0$ such that for any $\mathbf{k}\in \Gamma \backslash \{%
\mathbf{k}_{j};j=1,2,...,2q\}$%
\begin{equation}
\left\vert |\mathbf{k}|^{2}-1\right\vert \geq \frac{c}{N_{\mathbf{k}%
}^{2l_{0}}},  \label{dioph1}
\end{equation}%
where $l_{0}+1$ is the order of the algebraic number $\omega =2\cos \pi /q.$
This estimate is similar to the Siegel's diophantine condition for linearization of vector fields \cite{Arn2}.

This lower bound shows that the inverse of ${\mathfrak{L}}_{0}$ on the
orthogonal complement of its kernel is an unbounded operator in $\mathcal{H}%
_{s}$, only bounded from $\mathcal{H}_{s}$ to $\mathcal{H}_{s-4l_{0}}.$ In
other words, $0$ \emph{belongs to the continuous spectrum of }${\mathfrak{L}}_{0}$
and the main difficulty to be solved below is to find a bound for the
inverse ${\mathcal{L}}_{\epsilon }^{-1}$ for small values of $\epsilon .$
Notice that ${\mathcal{L}}_{\epsilon }$ is selfadjoint in $\mathcal{H}_{0}$
but not in $\mathcal{H}_{s}$ for $s>0.$ It is tempting to work on its small
(real) eigenvalues to obtain a bound of its inverse. However, we are in
infinite dimensions, so the spectrum does not contain only eigenvalues, and
an option would be to truncate the space to functions with finite Fourier
expansions (with $\mathbf{k}$ such that $N_{\mathbf{k}}\leq N).$ Since our
method consists in reducing the study to an operator in a smaller space, it
is preferable to use the eigenvalues later, on the reduced operator.

\subsection{Splitting of the space and first reduction of the problem\label%
{special splitting section}}

Let us split the space $\mathcal{H}_{s}$ into three mutually othogonal (in
any ${\mathcal{H}}_{s}$) subspaces. We define 
\begin{eqnarray*}
E_{0} &=&\left\{ W=\sum_{\mathbf{k}\in \Gamma }W^{(\mathbf{k})}e^{i\mathbf{k}%
\cdot \mathbf{x}}\in {\mathcal{H}}_{s};||\mathbf{k}|^{2}-1|^{2}\geq \delta
^{2},\text{ and }|\mathbf{k}-\mathbf{k}_{j}|>\delta _{1},,j\in
\{1,2q\}\right\} , \\
E_{1} &=&\left\{ W=\sum_{\mathbf{k}\in \Gamma }W^{(\mathbf{k})}e^{i\mathbf{k}%
\cdot \mathbf{x}}\in {\mathcal{H}}_{s};\mathbf{k}\in \sigma _{1}\right\} , \\
E_{2} &=&\left\{ W=\sum_{\mathbf{k}\in \Gamma }W^{(\mathbf{k})}e^{i\mathbf{k}%
\cdot \mathbf{x}}\in {\mathcal{H}}_{s};\exists j\in \{1,..2q\}\text{ such
that }\mathbf{k}\in \sigma _{2,j}\right\} ,
\end{eqnarray*}%
where (see figure \ref{fig:spectra})%
\begin{eqnarray*}
\sigma _{1} &=&\{\mathbf{k}\in \Gamma ;\text{ }||\mathbf{k}%
|^{2}-1|^{2}<\delta ^{2}\text{ and }|\mathbf{k}-\mathbf{k}_{j}|>\delta _{1}=%
\sqrt{3\delta },j\in \{1,2q\}\}, \\
\sigma _{2,j} &=&\{\mathbf{k}\in \Gamma ;\text{ }|\mathbf{k}-\mathbf{k}%
_{j}|\leq \delta _{1}\},\text{ }\sigma _{2}=\cup _{j=1}^{2q}\sigma _{2,j}.
\end{eqnarray*}%
\begin{figure}[tbp]
\begin{center}
\includegraphics[width=0.75\hsize]{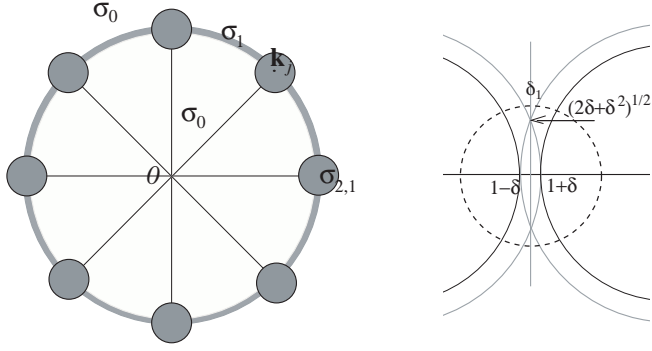}
\end{center}
\par
\caption{Division of the Fourier spectrum into $\protect\sigma _{0}$, $%
\protect\sigma _{1}$ and $\protect\sigma _{2}$.}
\label{fig:spectra}
\end{figure}
In figure \ref{fig:spectra} the annulus centered at the unit circle has a
thickness $2\delta $ and the little discs should have a radius $\delta _{1}=%
\sqrt{3\delta }$, such that the intersection of the shaded area with the
shifted one, centered at the point (2$\mathbf{k}_{1}$,0) is, for $\delta $
small enough, reduced to the disc centered at $\mathbf{k}_{1}.$ In the
sequel,\emph{\ we choose} $\delta =C\epsilon ^{1/2}$ \emph{with} $C$\emph{\
large enough} and 
\begin{equation*}
\delta _{1}=\epsilon ^{1/4}\sqrt{3C}
\end{equation*}%
hence $2\delta +\delta ^{2}<\delta _{1}^{2}$ is verified for $\epsilon
^{1/2}<1/C$ and the intersection $\sigma _{1}\cap \{\sigma _{1}+2\mathbf{k}%
_{1}\}$ is empty (\emph{hint:} solve $\delta _{1}^{2}=(1+\delta )^{2}-1$ for
intersecting the circle centered in $0,$ of radius $1+\delta $ with the line
of abscissa $1$ parallel to $y$ axis).

This leads to (see figure \ref{fig:spectra-intersec})%
\begin{equation*}
\sigma _{1}\cap \{\sigma _{1}+\mathbf{k}_{j}+\mathbf{k}_{l},\text{ }%
j,l=1,..2q,\text{ \ }\mathbf{k}_{j}+\mathbf{k}_{l}\neq 0\}=\varnothing .
\end{equation*}

\begin{figure}[tbp]
\includegraphics[width=1.0\hsize]{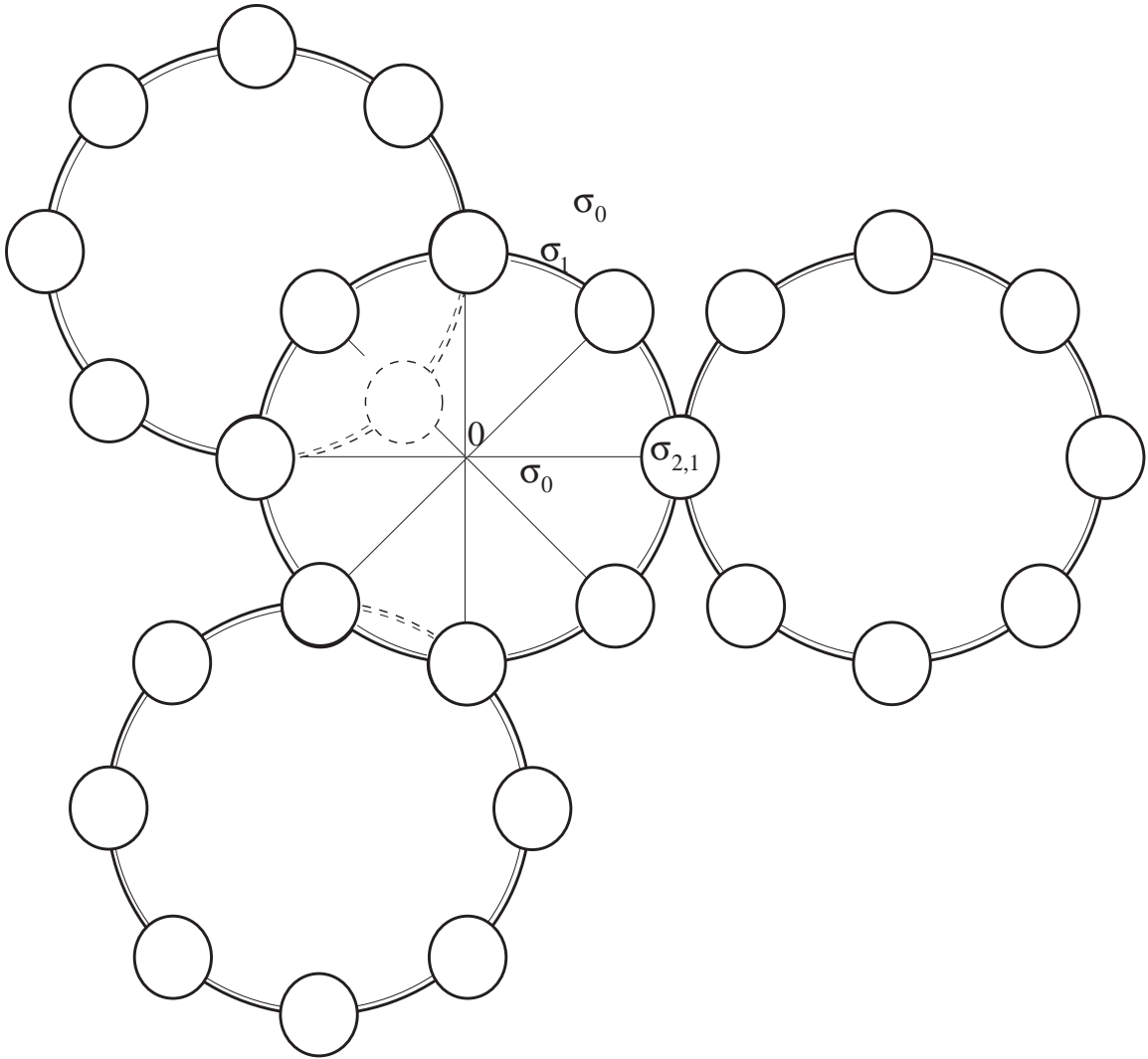}
\caption{Empty intersections of $\protect\sigma _{1}$ with \ $\protect\sigma %
_{1}$ shifted by $\mathbf{k}_{j}+\mathbf{k}_{l}$}
\label{fig:spectra-intersec}
\end{figure}
The subspaces $E_{l}$ are closed as intersections of closed subspaces
(kernel of certain coefficients, still continuous functionnals here) and we
have the orthogonal decomposition%
\begin{equation*}
{\mathcal{H}}_{s}=E_{0}\overset{\bot }{\oplus }E_{1}\overset{\bot }{\oplus }%
E_{2}.
\end{equation*}%
The orthogonal projections associated with this decomposition are denoted by 
$P_{0},P_{1},P_{2}.$ We also notice that the \emph{multiplication operator
by a function} having a \emph{finite Fourier expansion} with wave vectors in 
$\Gamma $ is a \emph{bounded} linear operator in ${\mathcal{H}}_{r}$ for any 
$r\geq 0$. Indeed a finite Fourier expansion belongs to ${\mathcal{H}}_{s}$
for any $s,$ and Lemmas \ref{algebraa} and \ref{algebrab} apply.

\subsection{Reduction to the subspace $E_{2}$}

\label{sec:bound}The aim here is to solve with respect to $U\in {\mathcal{H}}%
_{s}$ the equation%
\begin{equation}
\mathcal{L}_{\epsilon }U=f,  \label{equ for e.v of L_ep}
\end{equation}%
where $\mathcal{L}_{\epsilon }$ is defined in (\ref{eq:Lepsi}) and where $%
f\in {\mathcal{H}}_{s}$ is given.

\begin{remark}
We denote by $\mathcal{L}_{0}$ the operator $\mathcal{L}_{0,0}=\mathcal{L}%
_{0,W}=(1+\Delta )^{2}=\mathfrak{L}_0.$ For being more explicit in formulae,
we denote by $P_{j}\mathcal{L}_{0}P_{j}$ the restriction of $\mathcal{L}_{0}$
to the invariant subspace $E_{j},j=0,1,2.$
\end{remark}

We decompose this equation on the subspaces $E_{0},E_{1},E_{2},$ which
gives, after noticing that the subspaces $E_{0},$ $E_{1},$ $E_{2}$ are
invariant under $\mathcal{L}_{0},$ and that $\mathcal{L}_{0}|_{E_{1}}$, $%
\mathcal{L}_{0}|_{E_{2}}$ are bounded operators, and $P_{j}\mathcal{L}%
_{0}P_{0}=0$ for $j\in \{1,2\},$%
\begin{eqnarray}
P_{0}\mathcal{L}_{\epsilon }U_{0}+P_{0}\{(\epsilon ^{2}a+\epsilon
^{4}b+\epsilon ^{6}\mathcal{P}_{\epsilon })(U_{1}+U_{2})\} &=&f_{0}
\label{P0} \\
P_{1}\mathcal{L}_{\epsilon }U_{1}+P_{1}\{(\epsilon ^{2}a+\epsilon
^{4}b+\epsilon ^{6}\mathcal{P}_{\epsilon })(U_{0}+U_{2})\} &=&f_{1}
\label{P1} \\
P_{2}\mathcal{L}_{\epsilon }U_{2}+P_{2}\{(\epsilon ^{2}a+\epsilon
^{4}b+\epsilon ^{6}\mathcal{P}_{\epsilon })(U_{0}+U_{1})\} &=&f_{2}
\label{P2}
\end{eqnarray}%
where $f_{j}=P_{j}f,j=0,1,2.$

We have the following

\begin{lemma}
\label{LemFirstDecomp} Let fix $S\geq 0$ and choose $s$ such that $0\leq
s\leq S.$ Then, there exists $\epsilon _{0}>0,$ such that choosing $C$ large
enough in the definition of $\delta $, $\epsilon \leq \epsilon _{0}$, 
the equation $\mathcal{L}_{\epsilon }U=f$ in $\mathcal{H}_{s}$ reduces to 
\begin{equation*}
\mathcal{L}_{\epsilon }^{(2)}U_{2}=\mathcal{Q}(\epsilon )f
\end{equation*}%
where $U_{2}=P_{2}U$,
\begin{eqnarray}
\mathcal{L}_{\epsilon }^{(2)}&=&\Lambda _{\epsilon}+\epsilon ^{4}R_{\epsilon }\notag \\
\Lambda _{\epsilon }U_{2} &=&P_{2}(\mathcal{L}_{0}+\epsilon ^{2}a)U_{2} 
\notag \\
\mathcal{R}_{\epsilon }U_{2} &=&P_{2}\{bU_{2}-\widetilde{a}(P_{0}\mathcal{L}%
_{0}P_{0})^{-1}P_{0}(\widetilde{a}U_{2})\}+O(\epsilon ^{2})U_{2},
\label{def Lambda_ep}
\end{eqnarray}%
Here $O(\epsilon ^{2})$ denotes a bounded linear operator in ${\cal H}_s$ which norm is bounded by $c\ep^2$. 
The components $U_0:=P_{0}U$ and $U_1:=P_{1}U$ are functions of $U_2$ and $f$ and satisfy the following inequalities~:
\begin{eqnarray}
||U_0||_{s} &\leq &c\epsilon ^{2}||U_{2}||_{s}+\frac{c}{\epsilon }%
||(P_{0}+P_{1})f||_{s}  \notag \\
||U_1||_{s} &\leq &c\epsilon ^{4}||U_{2}||_{s}+\frac{c}{\epsilon ^{2}}%
||(\epsilon P_{0}+P_{1})f||_{s}  \notag
\end{eqnarray}%
where $\widetilde{a}=a-a_{0},$ for a certain $c>0,$ only depending on $s,$
and $R_{\epsilon }$ and $\mathcal{Q}(\epsilon )$ are bounded linear
operators in $\mathcal{H}_{s}$, depending smoothly on $\epsilon $.
\end{lemma}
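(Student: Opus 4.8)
\emph{Strategy.} I would run a two–stage Schur complement (Lyapunov–Schmidt) reduction of the block system \eqref{P0}--\eqref{P2}: eliminate $U_1$, then $U_0$, and read off the operator acting on the surviving component $U_2$. Write $\mathcal L_\epsilon=(1+\Delta)^2+V_\epsilon$, $V_\epsilon=\epsilon^2a+\epsilon^4b+\epsilon^6\mathcal P_\epsilon$, and $a=a_0+\widetilde a$ with $a_0=3$. Each $E_j$ is invariant under $(1+\Delta)^2$ and under the scalar $3\epsilon^2$, so every other block of $\mathcal L_\epsilon$ --- diagonal or off–diagonal --- is produced by multiplication by $\widetilde a,b,\mathcal P_\epsilon$; these are \emph{finite} trigonometric polynomials with wavevectors of length bounded by a fixed $M=M(q)$ (fixed by the orders of $u_0,u_1,u_2$ in \eqref{eq:Lepsi}), hence bounded in every $\mathcal H_r$ with norm uniform for $r\in[0,S]$ and $\epsilon\le\epsilon_0$ (the elementary version of Lemmas \ref{algebraa}--\ref{algebrab}). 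In particular $\|P_i\mathcal L_\epsilon P_j\|_{\mathcal H_s\to\mathcal H_s}=O(\epsilon^2)$ for $i\ne j$.

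\emph{Geometric/arithmetic input.} Three facts, all consequences of the choice $\delta=C\epsilon^{1/2}$, $\delta_1=\sqrt{3C}\,\epsilon^{1/4}$ together with the separation \eqref{dioph1} (which says in particular that the only unit–modulus points of $\Gamma$ are the $\mathbf k_n$, and that a point $\mathbf k_n+\mathbf w$ with $N_{\mathbf w}$ bounded is either some $\mathbf k_m$ or satisfies $\bigl||\mathbf k_n+\mathbf w|^2-1\bigr|\ge c_\star$ for a fixed $c_\star>0$):
\begin{itemize}
\item[(a)] $P_1\widetilde aP_1=0$, i.e.\ the emptiness $\sigma_1\cap\{\sigma_1+\mathbf k_j+\mathbf k_l\}=\varnothing$ recorded before Figure \ref{fig:spectra-intersec};
\item[(b)] $P_1\mathcal L_\epsilon P_2=P_2\mathcal L_\epsilon P_1=0$;
\item[(c)] $P_0\mathcal L_\epsilon P_2$ and $P_2\mathcal L_\epsilon P_0$ are supported on \emph{non–resonant} modes, namely $\mathbf k_n+\mathbf w$ (up to $O(\delta_1)$) with $N_{\mathbf w}$ bounded and $\bigl||\mathbf k_n+\mathbf w|^2-1\bigr|\ge c_\star$; on the span of such modes the diagonal operators $(P_0\mathcal L_0P_0)^{-1}$ and $\widehat A_\epsilon^{-1}$ (defined below) have $\epsilon$–independent norm, and $P_1\mathcal L_\epsilon P_0\,(P_0\mathcal L_0P_0)^{-1}P_0\mathcal L_\epsilon P_2=0$.
\end{itemize}
All of (b)--(c) use the same mechanism: $P_2$ is supported within $\delta_1\to0$ of the $\mathbf k_n$, so translating it by one or two of the bounded wavevectors of $V_\epsilon$ either brings it back into some $\sigma_{2,m}$ (if a translate of $\mathbf k_n$ lands on the circle it \emph{is} some $\mathbf k_m$) or pushes it to modes with $\bigl||\mathbf k|^2-1\bigr|$ bounded below --- never into $\sigma_1$ or near the unit circle.

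\emph{The diagonal blocks and the reduction.} On $E_1$, (a) and $a_0=3>0$ give $P_1\mathcal L_\epsilon P_1=P_1(1+\Delta)^2P_1+3\epsilon^2P_1+O(\epsilon^4)$: a diagonal multiplier $\ge3\epsilon^2$ plus an $O(\epsilon^4)$ term, hence invertible on $E_1$ with $\|(P_1\mathcal L_\epsilon P_1)^{-1}\|=O(\epsilon^{-2})$ by a Neumann series. On $E_0$, $P_0(1+\Delta)^2P_0\ge\delta^2\,\mathrm{Id}=C^2\epsilon\,\mathrm{Id}$, so for $C$ large and $\epsilon\le\epsilon_0(C)$ (the rest of $P_0\mathcal L_\epsilon P_0$ being $O(\epsilon^2)=o(\delta^2)$) the Schur complement $\widehat A_\epsilon:=P_0\mathcal L_\epsilon P_0-P_0\mathcal L_\epsilon P_1(P_1\mathcal L_\epsilon P_1)^{-1}P_1\mathcal L_\epsilon P_0$ (its correction is $O(\epsilon^2)O(\epsilon^{-2})O(\epsilon^2)=O(\epsilon^2)$) is invertible on $E_0$ with $\|\widehat A_\epsilon^{-1}\|=O(\epsilon^{-1})$ --- this is exactly where $\delta\sim\epsilon^{1/2}$ is used, so that the biharmonic term dominates the $O(\epsilon^2)$ potential. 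By (b), \eqref{P1} gives $U_1=(P_1\mathcal L_\epsilon P_1)^{-1}(f_1-P_1\mathcal L_\epsilon P_0U_0)$; substituting into \eqref{P0} yields $\widehat A_\epsilon U_0=f_0-P_0\mathcal L_\epsilon P_1(P_1\mathcal L_\epsilon P_1)^{-1}f_1-P_0\mathcal L_\epsilon P_2U_2$, so $U_0$ and then $U_1$ are explicit functions of $(U_2,f)$; substituting both into \eqref{P2} (where $P_2\mathcal L_\epsilon P_1=0$) gives $\mathcal L_\epsilon^{(2)}U_2=\mathcal Q(\epsilon)f$ with $\mathcal L_\epsilon^{(2)}=P_2\mathcal L_\epsilon P_2-P_2\mathcal L_\epsilon P_0\,\widehat A_\epsilon^{-1}\,P_0\mathcal L_\epsilon P_2$ and $\mathcal Q(\epsilon)f=f_2-P_2\mathcal L_\epsilon P_0\widehat A_\epsilon^{-1}\bigl(f_0-P_0\mathcal L_\epsilon P_1(P_1\mathcal L_\epsilon P_1)^{-1}f_1\bigr)$. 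Finally $P_2\mathcal L_\epsilon P_2=\Lambda_\epsilon+\epsilon^4P_2(b+\epsilon^2\mathcal P_\epsilon)P_2$ and, using (c), $P_2\mathcal L_\epsilon P_0\widehat A_\epsilon^{-1}P_0\mathcal L_\epsilon P_2=\epsilon^4P_2\widetilde a\,(P_0\mathcal L_0P_0)^{-1}P_0\widetilde aP_2+(\text{lower order})$ with $(P_0\mathcal L_0P_0)^{-1}$ acting only on non–resonant modes (so $R_\epsilon$ is a bounded operator); this gives \eqref{def Lambda_ep}. The same non–resonance feature --- $\widehat A_\epsilon^{-1}$ acting as $O(1)$ on the range of $P_0\mathcal L_\epsilon P_2$, and $P_1\mathcal L_\epsilon P_0$ killing that range after one diagonal step --- upgrades the crude bounds $O(\epsilon^{-1}),O(\epsilon^{-2}),O(\epsilon^2)$ into the announced inequalities for $\|U_0\|_s,\|U_1\|_s$; all constants stay uniform in $s\in[0,S]$ because only fixed trigonometric polynomials are multiplied.

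\emph{Main obstacle.} The real content is (a)--(c): (a) prevents the floor $3\epsilon^2$ of $P_1\mathcal L_\epsilon P_1$ from being destroyed by an $O(\epsilon^2)$ perturbation; (b) decouples $E_1$ from $E_2$ so that $\mathcal L_\epsilon^{(2)}$ is a perturbation of $\Lambda_\epsilon$ of precisely the stated size and form; (c) keeps $R_\epsilon$ bounded and sharpens the $U_0,U_1$ estimates. They rest on the arithmetic separation \eqref{dioph1} of the quasilattice and on the quantitative choice of $\delta,\delta_1$; once these are in hand, the rest is a routine Schur–complement computation with Neumann–series estimates.
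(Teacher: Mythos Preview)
Your approach is correct and rests on the same two ingredients as the paper: the geometric/arithmetic lemma (your (a)--(c), the paper's Lemma~\ref{prop Proj}) and a block elimination with Neumann--series bounds. The differences are organizational rather than substantive.

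\emph{Order of elimination.} You eliminate $U_1$ first (using your exact decoupling (b)) and then form the Schur complement $\widehat A_\epsilon$ on $E_0$; the paper does the opposite, solving \eqref{P0} for $U_0$ via $(P_0\mathcal L_\epsilon P_0)^{-1}$ first, then substituting into \eqref{P1} and inverting $P_1(\mathcal L_0+\epsilon^2a_0)P_1+O(\epsilon^3)$. The two orderings yield the same reduced operator on $E_2$, and the paper's route avoids introducing $\widehat A_\epsilon$ at all.

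\emph{Strength of the geometric input.} Your claim (b), $P_1\mathcal L_\epsilon P_2=P_2\mathcal L_\epsilon P_1=0$, is slightly stronger than what the paper isolates in Lemma~\ref{prop Proj}: there only $P_1(aU_2)=P_1(bU_2)=0$ and $\widetilde aU_1\in E_0$ are stated, while the $\epsilon^6\mathcal P_\epsilon$ cross terms are simply kept and absorbed later (they contribute at order $\epsilon^6\cdot\epsilon^{-2}$ or higher). Your stronger version is nonetheless true by the same mechanism---$\mathcal P_\epsilon$ is a fixed finite trigonometric polynomial, so the finitely many translates $\mathbf k_j+\mathbf k_{\mathbf m}$ are either some $\mathbf k_r$ or a fixed distance from the unit circle---and it streamlines your bookkeeping. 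Likewise, the vanishing $P_1\widetilde a(P_0\mathcal L_0P_0)^{-1}P_0\widetilde a P_2=0$ in your (c) is exactly what the paper records (``$P_1[a(P_0\mathcal L_0P_0)^{-1}P_0(aU_2)]=0$'') and uses to obtain the $\epsilon^4$ (rather than $\epsilon^2$) bound on $U_1$.

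In short: same proof, reordered, with a marginally cleaner decoupling statement on your side; nothing is missing.
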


We need the following Lemma

\begin{lemma}
\label{prop Proj}For $\epsilon $ small enough, we have%
\begin{equation}
\widetilde{a}U_{1}\in E_{0},\text{ \ }P_{1}(aU_{2})=0,\text{ }%
P_{1}(bU_{2})=0,  \label{propProj}
\end{equation}%
where $\widetilde{a}$ is the oscillating part of $a:$%
\begin{equation*}
\widetilde{a}=a-a_{0}.
\end{equation*}%
Moreover, the Fourier spectra of $P_{0}(aU_{2})$ and $P_{0}(bU_{2})$ are at
a distance of order 1 of the unit circle.
\end{lemma}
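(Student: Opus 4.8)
The plan is to prove all four assertions by bookkeeping of Fourier supports, and everything rests on two elementary facts. The first is the relation $\sigma_1\cap\{\sigma_1+\mathbf k_j+\mathbf k_l\}=\varnothing$ for $\mathbf k_j+\mathbf k_l\neq 0$, already established just before Figure~\ref{fig:spectra-intersec}. The second is a statement about unit wave vectors: \emph{if $\mathbf a,\mathbf b,\mathbf c$ have unit length and $|\mathbf a+\mathbf b+\mathbf c|=1$, then two of them are opposite} — and then $\mathbf a+\mathbf b+\mathbf c$ equals the third one; this follows in one line by rotating so that $\mathbf a=1,\mathbf b=e^{i\alpha},\mathbf c=e^{i\beta}$ and rewriting $|\mathbf a+\mathbf b+\mathbf c|^2=1$ as $\cos\tfrac{\alpha-\beta}{2}\bigl(\cos\tfrac{\alpha+\beta}{2}+\cos\tfrac{\alpha-\beta}{2}\bigr)=0$. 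Two auxiliary observations make these usable: $\widetilde a=3u_0^2-\lambda_2-3$ has the finite Fourier support $\{\mathbf k_j+\mathbf k_l\ :\ 1\leq j,l\leq 2q,\ \mathbf k_j+\mathbf k_l\neq 0\}$ (its average is exactly $a_0=3$), and multiplication by such trigonometric polynomials is bounded on each $\mathcal H_r$; and all the ``gaps'' that occur below are bounded below by positive constants depending only on $q$, since only finitely many configurations of the $\mathbf k_r$'s are involved, so for $\epsilon$ small they dominate $\delta=C\epsilon^{1/2}$ and $\delta_1=\epsilon^{1/4}\sqrt{3C}$.

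To prove $\widetilde a U_1\in E_0$: a Fourier mode of $\widetilde aU_1$ has the form $\mathbf k+\mathbf k_j+\mathbf k_l$ with $\mathbf k\in\sigma_1$ and $\mathbf k_j+\mathbf k_l\neq 0$; by the first fact it does not lie in $\sigma_1$, so it suffices to exclude $\sigma_2$. If $|\mathbf k+\mathbf k_j+\mathbf k_l-\mathbf k_r|\leq\delta_1$ for some $r$, then $\mathbf k$ is within $\delta_1$ of the sum $\mathbf k_r+\mathbf k_{j+q}+\mathbf k_{l+q}$ of three unit vectors, in which $\mathbf k_{j+q}$ and $\mathbf k_{l+q}$ are not opposite; by the second fact, either this sum is one of the $\pm\mathbf k_m$'s (namely when $\mathbf k_r$ is opposite to $\mathbf k_{j+q}$ or to $\mathbf k_{l+q}$), which forces $\mathbf k$ to lie within $\delta_1$ of a unit wave vector and so contradicts $\mathbf k\in\sigma_1$; or $||\mathbf k_r+\mathbf k_{j+q}+\mathbf k_{l+q}|^2-1|$ is bounded below by a $q$-dependent constant, which contradicts $||\mathbf k|^2-1|^2<\delta^2$ once $\epsilon$ is small enough. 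Hence every such mode lies in $\sigma_0$.

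To prove $P_1(aU_2)=0$ and to locate $P_0(aU_2)$: write $a=a_0+\widetilde a$; the piece $a_0U_2$ stays in $E_2$, and a mode of $\widetilde aU_2$ is $\mathbf k+\mathbf k_j+\mathbf k_l$ with $\mathbf k\in\sigma_{2,m}$ and $\mathbf k_j+\mathbf k_l\neq0$, hence within $\delta_1$ of $\mathbf k_m+\mathbf k_j+\mathbf k_l$. By the second fact applied to $\{\mathbf k_m,\mathbf k_j,\mathbf k_l\}$: if $\mathbf k_m$ is opposite to $\mathbf k_j$ or to $\mathbf k_l$, this sum equals $\mathbf k_l$ resp.\ $\mathbf k_j$, so the mode lies in some $\sigma_{2,\cdot}$; otherwise $|\mathbf k_m+\mathbf k_j+\mathbf k_l|\neq 1$, bounded away from $1$, so the mode is at distance of order $1$ from the unit circle and in particular not in $\sigma_1$. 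In either case the $E_1$-component vanishes, and only modes of the second type can feed into $E_0$; this gives $P_1(aU_2)=0$ and shows $P_0(aU_2)$ is supported at distance of order $1$ from the unit circle.

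The statements for $b$ are handled the same way, but using the explicit form (\ref{u1}) of $u_1$: every Fourier mode of $b=6u_0u_1-\lambda_4$ is $0$ or a sum $\mathbf k_{j_0}+\mathbf k_{j_1}+\mathbf k_{j_2}+\mathbf k_{j_3}$ in which $\mathbf k_{j_1},\mathbf k_{j_2},\mathbf k_{j_3}$ are pairwise non-opposite (repetitions allowed) — precisely what the conditions in (\ref{u1}), together with the ``Property'' of Section~\ref{sec:formal}, encode. So a mode of $bU_2$ is, up to $\delta_1$, a sum of at most five unit wave vectors, namely $\mathbf k_m$ together with a mode of $b$; running through the finitely many such configurations and applying the opposite-pair mechanism repeatedly (equivalently, sorting out which of the resulting vanishing sums of four or six roots of unity are unions of antipodal pairs and of $120^{\circ}$-triples), one checks that each such sum is either one of the wave vectors $\mathbf k_r$ — so the mode lands in $\sigma_2$ — or has modulus bounded away from $1$ — so the mode is at distance of order $1$ from the unit circle and not in $\sigma_1$. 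This yields $P_1(bU_2)=0$ and puts the Fourier spectrum of $P_0(bU_2)$ away from the unit circle. I expect this last step to be the main obstacle: whereas $\widetilde a$ contributes only sums of two unit vectors, $b$ contributes sums of up to four, so after adding $\mathbf k_m$ one must carry out a finite but somewhat lengthy case analysis — organised by which partial sums cancel — to be sure no sum of modulus one escapes the family $\{\mathbf k_r\}$; once $\delta$ and $\delta_1$ are fixed small compared with the finitely many $q$-dependent gaps, everything else is routine planar geometry.
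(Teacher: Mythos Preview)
Your proof is correct and follows essentially the same strategy as the paper's: both reduce everything to tracking where Fourier modes land after shifting by the finitely many wave vectors in $\widetilde a$ and $b$, and both exploit that only finitely many configurations of $2q$th roots are involved, so all nonzero ``gaps'' from the unit circle are bounded below by $q$-dependent constants. Your ``second fact'' (three unit vectors summing to a unit-length vector forces an opposite pair) is exactly the content of the paper's geometric observation that the unit circle intersects its translates by $\mathbf k_j+\mathbf k_l$ ($\mathbf k_j+\mathbf k_l\neq 0$) precisely in the points $\mathbf k_r$; the paper just packages this as ``the intersection $\mathfrak a_0\cap\{\cup_{|\mathbf m|=2}\mathfrak a_{\mathbf k_{\mathbf m}}\}$ is exactly the union of the little closed discs'' rather than isolating the trigonometric identity. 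For the $b$-part the paper simply asserts that $|\mathbf k_j+\mathbf k_{\mathbf m}|\neq 1$ unless $\mathbf k_j+\mathbf k_{\mathbf m}=\mathbf k_r$ and invokes finiteness of the cases $|\mathbf m|\leq 4$, whereas you correctly unpack the structure of the Fourier support of $b$ from~(\ref{u1}) and flag the residual case analysis; so your treatment of this step is in fact more careful than the paper's, not less.
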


By distance of order 1 of the circle, we mean a strictly positive distance
as $\epsilon $ tends to 0, in the decomposition into subspaces $E_{j}$.

\begin{proof} We need to prove that i) for $%
\mathbf{k}\in \sigma _{1},$ then $\mathbf{k}+\mathbf{k}_{\mathbf{m}}\notin
\sigma _{1}\cup \sigma _{2}$ for $\mathbf{k}_{\mathbf{m}}\neq 0,|\mathbf{m}%
|=2,$ and ii) for $\mathbf{k}\in \sigma _{2},$ then $\mathbf{k}+\mathbf{k}_{%
\mathbf{m}}\notin \sigma _{1}$ with $|\mathbf{m}|=2$ or $4.$

For showing this, let us first observe that the intersections of the unit
circle with all circles of radius 1, centered at $\mathbf{k}_{\mathbf{m}},$
with $\mathbf{k}_{\mathbf{m}}\neq 0,$ $|\mathbf{m}|=2,$ are exactly the $2q$
points $\mathbf{k}_{r},$ \ $r=1,...,2q.$ Let us define the \emph{\ region }$%
\mathfrak{a}_{0}$ (shaded region in figure \ref{fig:spectra} (a)) defined by%
\begin{equation*}
\mathfrak{a}_{0}=\sigma _{1}\cup _{j\in \{1,..,2q\}}\sigma _{2,j}
\end{equation*}%
which is the union of the annulus $\sigma _{1}$ and the discs $\cup _{j\in
\{1,..,2q\}}\sigma _{2,j}.$ Now consider the intersection of $\mathfrak{a}%
_{0} $ with the union of shifted analogue annuli $\mathfrak{a}_{\mathbf{k}_{%
\mathbf{m}}}$ 
\begin{equation*}
\mathfrak{a}_{\mathbf{k}_{\mathbf{m}}}=\{\mathbf{k}+\mathbf{k}_{\mathbf{m}};%
\mathbf{k}\in \mathfrak{a}_{0},\text{ }\mathbf{k}_{\mathbf{m}}\neq 0,|%
\mathbf{m}|=2\}
\end{equation*}%
centered at all $\mathbf{k}_{\mathbf{m}},$such that $|\mathbf{m}|=2$ (see
figure \ref{fig:spectra-intersec}). It is clear that for any given $q,$ and $%
\epsilon $ small enough, the little discs of radius $\delta _{1}$ are such
that the intersection $\mathfrak{a}_{0}\cap \{\cup _{|\mathbf{m}|=2}%
\mathfrak{a}_{\mathbf{k}_{\mathbf{m}}}\}$ is exactly the union of the little
closed discs centered at each $\mathbf{k}_{j}.$ It results that for $U\in
E_{1}\oplus E_{2},$ the product $\widetilde{a}U$ for which the corresponding
wave vectors belong to some $\mathfrak{a}_{\mathbf{k}_{\mathbf{m}}},$ has a
zero projection on $E_{1}.$ This proves that%
\begin{equation*}
P_{1}(\widetilde{a}U_{1})=0,\text{ }P_{1}(\widetilde{a}U_{2})=0,
\end{equation*}%
which implies%
\begin{equation*}
P_{1}(aU_{2})=0.
\end{equation*}%
It is also clear that 
\begin{equation*}
\mathfrak{a}_{0}\cap \{\mathbf{k}+\mathbf{k}_{\mathbf{m}};\mathbf{k}\in
\sigma _{1},\text{ }\mathbf{k}_{\mathbf{m}}\neq 0,|\mathbf{m}%
|=2\}=\varnothing ,
\end{equation*}%
which moreover implies that%
\begin{equation*}
P_{2}(\widetilde{a}U_{1})=0,
\end{equation*}%
and (\ref{propProj}) is proved for the part concerning $a.$ Now observe that
we have $|\mathbf{k}_{j}+\mathbf{k}_{\mathbf{m}}|\neq 1$ except when $%
\mathbf{k}_{j}+\mathbf{k}_{\mathbf{m}}=\mathbf{k}_{r}$ for some $r\in
(1,2q). $ Since we only consider the finite number of cases $|\mathbf{m}|=2$
or $4,$ it is clear that in choosing $\epsilon $ small enough (i.e. $\delta
_{1}$ small enough), then for $\mathbf{k}\in \sigma _{2},$ $\mathbf{k}+%
\mathbf{k}_{\mathbf{m}}\notin \sigma _{1}.$ It results in particular that 
\begin{equation*}
P_{1}(bU_{2})=0.
\end{equation*}%
The last assertion of Lemma \ref{prop Proj} results from the fact that $|%
\mathbf{k}+\mathbf{k}_{\mathbf{m}}|\neq 1$ for the Fourier spectrum of terms 
$\in P_{0}(aU_{2})$ and $P_{0}(bU_{2}),$ with a distance to the unit circle
equivalent to $\left||\mathbf{k}_{j}+\mathbf{k}_{\mathbf{m}}|-1\right|$
when it is not zero. 
\end{proof}%
\begin{proof} \emph{of Lemma}
\ref{LemFirstDecomp} : We know by construction that 
\begin{equation*}
||(P_{0}\mathcal{L}_{0}P_{0})^{-1}||_{s}\leq \frac{1}{\epsilon C^{2}},
\end{equation*}%
Hence, we have%
\begin{equation}
(P_{0}\mathcal{L}_{\epsilon}P_{0})^{-1}=[1+(P_{0}\mathcal{L}%
_{0}P_{0})^{-1}(\epsilon ^{2}a+\epsilon ^{4}b+\epsilon ^{6}\mathcal{P}%
_{\epsilon})]^{-1}(P_{0}\mathcal{L}_{0}P_{0})^{-1},
\label{invResolventinE_0}
\end{equation}%
and the estimate%
\begin{equation*}
||\epsilon ^{2}a+\epsilon ^{4}b+\epsilon ^{6}\mathcal{P}_{\epsilon
}||_{s}\leq c(s)\epsilon ^{2}
\end{equation*}%
 leads for $%
\epsilon $ small enough ($s\leq S$) to 
\begin{eqnarray*}
(P_{0}\mathcal{L}_{\epsilon}P_{0})^{-1} &=&[\mathbb{I}-(P_{0}\mathcal{L}%
_{0}P_{0})^{-1}(\epsilon ^{2}a+\epsilon ^{4}b+\epsilon ^{6}\mathcal{P}%
_{\epsilon })+ \\
&&+\{(P_{0}\mathcal{L}_{0}P_{0})^{-1}(\epsilon ^{2}a+\epsilon ^{4}b+\epsilon
^{6}\mathcal{P}_{\epsilon })\}^{2}+O(\epsilon ^{3})](P_{0}\mathcal{L}%
_{0}P_{0})^{-1}
\end{eqnarray*}%
with a convergent Neumann power series in the bracket, as soon as $\frac{%
c(s)\epsilon }{C^{2}}<1,$ which holds for $\epsilon $ small enough ($s\leq S$%
). The first consequence is 
\begin{equation*}
||(P_{0}\mathcal{L}_{\epsilon }P_{0})^{-1}||_{s}\leq \frac{1}{\epsilon
(C^{2}-c(s)\epsilon )}.
\end{equation*}%
Notice that
\begin{equation}
U_0=-(P_{0}\mathcal{L}_{\epsilon }P_{0})^{-1}P_{0}\{(\epsilon ^{2}a+\epsilon
^{4}b+\epsilon ^{6}\mathcal{P}_{\epsilon })(U_1+U_{2})\}+(P_{0}\mathcal{L}_{\epsilon }P_{0})^{-1}f_0
\label{invU_0}
\end{equation}
The last property of Lemma \ref{prop Proj} and (\ref{invResolventinE_0}) imply that
in (\ref{invU_0}) we have, for $\epsilon \in (0,\epsilon _{0})$ and for any $s$%
\begin{eqnarray*}
||(P_{0}\mathcal{L}_{0}P_{0})^{-1}P_{0}(aU_{2})||_{s} &\leq
&d(s)||U_{2}||_{s}, \\
||(P_{0}\mathcal{L}_{0}P_{0})^{-1}P_{0}(bU_{2})||_{s} &\leq
&d(s)||U_{2}||_{s},
\end{eqnarray*}%
\begin{equation*}
||(P_{0}\mathcal{L}_{0}P_{0})^{-1}a(P_{0}\mathcal{L}%
_{0}P_{0})^{-1}P_{0}(aU_{2})||_{s}\leq d(s)||U_{2}||_{s}.
\end{equation*}%
It results that%
\begin{eqnarray*}
-(P_{0}\mathcal{L}_{\epsilon }P_{0})^{-1}P_{0}\{(\epsilon ^{2}a+\epsilon
^{4}b+\epsilon ^{6}\mathcal{P}_{\epsilon })U_{2}\} &=&-\epsilon ^{2}(P_{0}%
\mathcal{L}_{0}P_{0})^{-1}P_{0}(aU_{2})+ \\
&&+\mathcal{O}(\epsilon ^{4}||U_{2}||_{s}),
\end{eqnarray*}%
hence%
\begin{equation}
U_{0}=Q_{0,1}(\epsilon )U_{1}+Q_{0,2}(\epsilon )U_{2}+(P_{0}\mathcal{L}%
_{\epsilon }P_{0})^{-1}f_{0}  \label{U_0 fn of U_1 U-2}
\end{equation}%
with%
\begin{equation*}
Q_{0,j}(\epsilon )=:-(P_{0}\mathcal{L}_{\epsilon
}P_{0})^{-1}P_{0}(\epsilon ^{2}a+\epsilon ^{4}b+\epsilon ^{6}\mathcal{P}%
_{\epsilon })P_{j},\text{ \ }j=1,2,
\end{equation*}%
and for $s\leq S,$ $\epsilon \in (0,\epsilon _{0}(S))$ the following
estimates hold:%
\begin{equation*}
Q_{0,2}(\epsilon )U_{2}=-\epsilon ^{2}(P_{0}\mathcal{L}%
_{0}P_{0})^{-1}P_{0}(aU_{2})+\epsilon ^{4}Q_{0,2}^{\prime }(\epsilon
)U_{2},
\end{equation*}%
\begin{eqnarray}
||Q_{0,1}(\epsilon )U_{1}||_{s} &\leq &\frac{c(s)\epsilon }{%
C^{2}-c(s)\epsilon }||U_{1}||_{s}\leq c_{1}(s)\epsilon ||U_{1}||_{s},
\label{estimU_0} \\
||Q_{0,2}^{\prime }(\epsilon )U_{2}||_{s} &\leq &c_{1}(s)||U_{2}||_{s}, 
\notag \\
||(P_{0}\mathcal{L}_{\epsilon }P_{0})^{-1}f_{0}||_{s} &\leq &\frac{c_{1}(s)%
}{\epsilon }||f_{0}||_{s}.  \notag
\end{eqnarray}%
It results from Lemma \ref{prop Proj} that equation (\ref{P1}) leads to%
\begin{equation}
P_{1}(\mathcal{L}_{0}+\epsilon ^{2}a_{0}+\epsilon ^{4}b+\epsilon ^{6}%
\mathcal{P}_{\epsilon })U_{1}+P_{1}\{(\epsilon ^{2}a+\epsilon
^{4}b+\epsilon ^{6}\mathcal{P}_{\epsilon })U_{0}\}+P_{1}\epsilon ^{6}%
\mathcal{P}_{\epsilon }U_{2}=f_{1}  \label{equW_1}
\end{equation}%
and since $a_{0}=3>0$ and the operator $P_{1}\mathcal{L}_{0}P_{1}$ is
positive, we can invert the operator $P_{1}(\mathcal{L}_{0}+\epsilon
^{2}a_{0})P_{1}$ with the estimate%
\begin{equation*}
||\{P_{1}(\mathcal{L}_{0}+\epsilon ^{2}a_{0})P_{1}\}^{-1}||\leq \frac{1}{%
3\epsilon ^{2}}.
\end{equation*}%
Now replacing $U_{0}$ by its expression (\ref{U_0 fn of U_1 U-2}) into
equation (\ref{equW_1}), we introduce an operator acting on $U_{1}$ of the
form%
\begin{equation*}
P_{1}(\epsilon ^{4}b+\epsilon ^{6}\mathcal{P}_{\epsilon
})P_{1}+P_{1}(\epsilon ^{2}a+\epsilon ^{4}b+\epsilon ^{6}\mathcal{P}%
_{\epsilon })Q_{0,1}(\epsilon )
\end{equation*}%
which is bounded by $O(\epsilon ^{3}),$ perturbing $P_{1}(\mathcal{L}%
_{0}+\epsilon ^{2}a_{0})P_{1}$ the inverse of which is bounded by $%
1/3\epsilon ^{2}$. It results that, for $\epsilon $ small enough, the
operator acting on $U_{1}$ has a bounded inverse, with%
\begin{equation*}
||\{P_{1}[\mathcal{L}_{\epsilon }+(\epsilon ^{2}a+\epsilon ^{4}b+\epsilon
^{6}\mathcal{P}_{\epsilon })Q_{0,1}(\epsilon )]P_{1}\}^{-1}||_{s}\leq 
\frac{1}{\epsilon ^{2}}.
\end{equation*}%
Moreover by Lemma \ref{prop Proj} we have $P_{1}[a(P_{0}\mathcal{L}%
_{0}P_{0})^{-1}P_{0}(aU_{2})]=0,$ hence it results that there are bounded
linear operators
\begin{equation*}
\mathcal{B}_{\epsilon }^{(0)}:E_{2}\rightarrow E_{0},\text{ \ }\mathcal{B}%
_{\epsilon }^{(1)}:E_{2}\rightarrow E_{1},
\end{equation*}%
such that%
\begin{eqnarray}
U_{1} &=&\epsilon ^{4}\mathcal{B}^{(1)}(\epsilon )U_{2}+\frac{1}{\epsilon }%
\mathcal{Q}^{(1,0)}\mathcal{(}\epsilon )f_{0}+\frac{1}{\epsilon ^{2}}%
\mathcal{Q}^{(1,1)}\mathcal{(}\epsilon )f_{1},  \label{invW_1} \\
U_{0} &=&\epsilon ^{2}\mathcal{B}^{(0)}(\epsilon )U_{2}+\frac{1}{\epsilon }%
\mathcal{Q}^{(0,0)}\mathcal{(}\epsilon )f_{0}+\frac{1}{\epsilon }\mathcal{Q%
}^{(0,1)}\mathcal{(}\epsilon )f_{1},  \label{invW_0}
\end{eqnarray}%
with the estimates%
\begin{eqnarray*}
||\mathcal{B}^{(j)}(\epsilon )||_{s} &\leq &c_{2}(s),j=0,1, \\
||\mathcal{Q}^{(i,j)}\mathcal{(}\epsilon )||_{s} &\leq &c_{2}(s),\text{ \ }%
i,j=0,1,
\end{eqnarray*}%
uniform in $\epsilon \in (0,\epsilon _{0})$ and in $W$ bounded in $\mathcal{H%
}_{s}.$ \ Moreover, as $\epsilon \rightarrow 0$%
\begin{eqnarray*}
\mathcal{B}^{(0)}(\epsilon) &=&\mathcal{B}_{0}^{(0)}+\mathcal{O}(\epsilon
^{2}), \\
\mathcal{B}_{0}^{(0)}U_{2} &\sim &-(P_{0}\mathcal{L}%
_{0}P_{0})^{-1}P_{0}(\widetilde{a}U_{2}).
\end{eqnarray*}%
Equation (\ref{P2}) now reads%
\begin{equation*}
P_{2}\mathcal{L}_{\epsilon }U_{2}+P_{2}\{(\epsilon ^{2}a+\epsilon
^{4}b+\epsilon ^{6}\mathcal{P}_{\epsilon })U_{0}\}+P_{2}\{(\epsilon
^{4}b+\epsilon ^{6}\mathcal{P}_{\epsilon })U_{1}\}=f_{2}
\end{equation*}%
and replacing $U_{0}$ and $U_{1}$ by their expressions (\ref{invW_0}), (\ref%
{invW_1}) in function of $U_{2}$ leads to 
\begin{eqnarray*}
&&P_{2}\mathcal{L}_{\epsilon }U_{2}+P_{2}\{(\epsilon ^{2}a+\epsilon
^{4}b+\epsilon ^{6}\mathcal{P}_{\epsilon })\epsilon ^{2}\mathcal{B}%
^{(0)}(\epsilon )U_{2}+ \\
&&+P_{2}\{(\epsilon ^{4}b+\epsilon ^{6}\mathcal{P}_{\epsilon })\epsilon
^{4}\mathcal{B}^{(1)}(\epsilon )U_{2}\} \\
&=&f_{2}-P_{2}\left( (\epsilon ^{2}a+\epsilon ^{4}b+\epsilon ^{6}\mathcal{P}%
_{\epsilon })[\frac{1}{\epsilon }\mathcal{Q}^{(0,0)}\mathcal{(}\epsilon
)f_{0}+\frac{1}{\epsilon }\mathcal{Q}^{(0,1)}\mathcal{(}\epsilon
)f_{1}]\right) + \\
&&-P_{2}\left( (\epsilon ^{4}b+\epsilon ^{6}\mathcal{P}_{\epsilon })[\frac{%
1}{\epsilon }\mathcal{Q}^{(1,0)}\mathcal{(}\epsilon )f_{0}+\frac{1}{%
\epsilon ^{2}}\mathcal{Q}^{(1,1)}\mathcal{(}\epsilon )f_{1}]\right) ,
\end{eqnarray*}%
this gives (see the definition of $\Lambda _{\epsilon }$ in Lemma \ref%
{LemFirstDecomp}) 
\begin{equation*}
\Lambda _{\epsilon }U_{2}+\epsilon ^{4}R_{\epsilon} U_{2}=\mathcal{Q}%
(\epsilon )f
\end{equation*}%
with the announced properties for bounded operators $R_{\epsilon}$ and $%
\mathcal{Q}(\epsilon)$ in $\mathcal{H}_{s}.$ Lemma \ref{LemFirstDecomp}
is proved.
 \end{proof}

%

\subsection{Structure of the reduced operator $\Lambda _{\protect\epsilon }$}

\subsubsection{General structure - Invariant subspaces}

We study in this section the structure of the operator $\Lambda _{\epsilon }$
defined by (\ref{def Lambda_ep}). We first observe that we deal with
functions $U$ which are invariant under the rotation $R_{\frac{\pi }{q}}$ of
the plane. So, let us define a new subspace of $\mathcal{H}_{s}$ for such
functions:%
\begin{equation*}
E_{2}^{(S)}=\{U\in E_{2};U(R_{\frac{\pi }{q}}\mathbf{x})=U(\mathbf{x})\}.
\end{equation*}%
This implies immediately that%
\begin{equation}
U^{(\mathbf{k})}=U^{(R_{\frac{\pi }{q}}\mathbf{k})},  \label{symCoef}
\end{equation}%
and for any $U\in E_{2}^{(S)}$ we have the following decomposition%
\begin{equation*}
U=\sum_{j=1,...,2q}U_{2,j}
\end{equation*}%
where%
\begin{equation}
U_{2,j}(\mathbf{x})=\sum_{\mathbf{k}\in \sigma _{2,j}}U^{(\mathbf{k})}e^{i%
\mathbf{k}\cdot \mathbf{x}}=U_{2,1}(R_{\frac{(1-j)\pi }{q}}\mathbf{x}).
\label{W2,j}
\end{equation}%
It results that any $U\in E_{2}^{(S)}$ may be written as%
\begin{equation*}
U(\mathbf{x})=\sum_{j=1,...,2q}U_{2,1}(R_{\frac{(1-j)\pi }{q}}\mathbf{x}).
\end{equation*}%
Let us notice that in the little disc $\sigma _{2,1}$ we have%
\begin{equation*}
\sigma _{2,1}\ni \mathbf{k}=\mathbf{k}_{1}+\mathbf{k}^{\prime },|\mathbf{k}%
^{\prime }|\leq \delta _{1}=\epsilon ^{1/4}\sqrt{3C},
\end{equation*}%
and let decompose the discs $\sigma _{2,l}$ into $2q$ equal sectors $\mathbf{%
k}_{l}+\Sigma _{m},m=1,...,2q$ such that%
\begin{equation}
\Sigma _{m}=\left\{ \mathbf{k}^{\prime }\in \Gamma ;|\mathbf{k}^{\prime
}|\leq \delta _{1},\arg \mathbf{k}^{\prime }\in \lbrack \frac{(m-1)\pi }{q}-%
\frac{\pi }{2q},\frac{(m-1)\pi }{q}+\frac{\pi }{2q})\right\} .
\label{Sigma_m}
\end{equation}%
For any $\mathbf{k}^{\prime }\in \Sigma _{l}$, we define the set of $2q$
spectral points $\sigma _{\mathbf{k}^{\prime }}^{(l)}$ by%
\begin{equation*}
\sigma _{\mathbf{k}^{\prime }}^{(l)}=\left\{ \mathbf{k}=\mathbf{k}_{j}+%
\mathbf{k}^{\prime }\in \sigma _{2,j};\text{ }j=1,...,2q\right\} .
\end{equation*}%
The subspace of $\mathcal{H}_{s}$ associated with $\sigma _{\mathbf{k}%
^{\prime }}^{(l)}$ is denoted by $E_{2,\mathbf{k}^{\prime }}^{(l)},$ so that
any $U\in E_{2}^{(S)}$ may be written as%
\begin{eqnarray*}
U(\mathbf{x}) &=&\sum_{j=1,...,2q}\sum_{\mathbf{k}_{1}+\mathbf{k}^{\prime
}\in \sigma _{2,1}}U^{(\mathbf{k}_{1}+\mathbf{k}^{\prime })}e^{i(\mathbf{k}%
_{j}+R_{\frac{(j-1)\pi }{q}}\mathbf{k}^{\prime })\cdot \mathbf{x}} \\
&=&\sum_{l=1,...,2q}\sum_{\mathbf{k}^{\prime }\in \Sigma
_{l}}\sum_{j=1,...,2q}U^{(\mathbf{k}_{1}+R_{\frac{(1-j)\pi }{q}}\mathbf{k}%
^{\prime })}e^{i(\mathbf{k}_{j}+\mathbf{k}^{\prime })\cdot \mathbf{x}},
\end{eqnarray*}%
hence%
\begin{eqnarray*}
U &=&\sum_{l=1}^{2q}\sum_{\mathbf{k}^{\prime }\in \Sigma _{l}}U^{(l,\mathbf{k%
}^{\prime })},\text{ \ }U^{(l,\mathbf{k}^{\prime })}\in E_{2,\mathbf{k}%
^{\prime }}^{(l)}, \\
U^{(l,\mathbf{k}^{\prime })}(\mathbf{x}) &=&\sum_{j=1,...,2q}U^{(\mathbf{k}%
_{1}+R_{\frac{(1-j)\pi }{q}}\mathbf{k}^{\prime })}e^{i(\mathbf{k}_{j}+%
\mathbf{k}^{\prime })\cdot \mathbf{x}}=\sum_{j=1,...,2q}U^{(\mathbf{k}_{j}+%
\mathbf{k}^{\prime })}e^{i(\mathbf{k}_{j}+\mathbf{k}^{\prime })\cdot \mathbf{%
x}}
\end{eqnarray*}%
and any $U\in E_{2}^{(S)}$ is \emph{completely determined by the set of }$%
2q- $\emph{\ dimensional} $U^{(1,\mathbf{k}^{\prime })}\in E_{2,\mathbf{k}%
^{\prime }}^{(1)},$ $\mathbf{k}^{\prime }\in \Sigma _{1},$ identified with
the set of components%
\begin{equation*}
\{U^{(\mathbf{k}_{j}+\mathbf{k}^{\prime })}=U^{(\mathbf{k}_{1}+R_{\frac{%
(1-j)\pi }{q}}\mathbf{k}^{\prime })},\text{ \ }j=1,...,2q\}.
\end{equation*}%
Indeed we have%
\begin{equation*}
U^{(l,\mathbf{k}^{\prime })}(\mathbf{x})=U^{(l+1,R_{\frac{\pi }{q}}\mathbf{k}%
^{\prime })}(R_{\frac{\pi }{q}}\mathbf{x}),
\end{equation*}%
hence 
\begin{eqnarray*}
U^{(l,\mathbf{k}^{\prime })}(\mathbf{x}) &=&U^{(1,R_{(1-l)\frac{\pi }{q}}%
\mathbf{k}^{\prime })}(R_{(1-l)\frac{\pi }{q}}\mathbf{x}),\text{ \ }\mathbf{k%
}^{\prime }\in \Sigma _{l}, \\
U^{(l,R_{\frac{(l-1)\pi }{q}}\mathbf{k}^{\prime })}(\mathbf{x})
&=&\sum_{j=1,...,2q}U^{({\mathbf{k}_{1}+R_{\frac{(l-j)\pi }{q}}\mathbf{k}%
^{\prime }})}e^{i(\mathbf{k}_{j}+R_{\frac{(l-1)\pi }{q}}\mathbf{k}^{\prime
})\cdot \mathbf{x}},\text{ \ }\mathbf{k}^{\prime }\in \Sigma _{1},
\end{eqnarray*}%
where%
\begin{equation}
U^{(1,\mathbf{k}^{\prime })}(\mathbf{x})=\sum_{j=1,...,2q}U^{({\mathbf{k}%
_{1}+R_{\frac{(1-j)\pi }{q}}\mathbf{k}^{\prime }})}e^{i(\mathbf{k}_{j}+%
\mathbf{k}^{\prime })\cdot \mathbf{x}},\text{ \ }\mathbf{k}^{\prime }\in
\Sigma _{1},  \label{W(1,k')}
\end{equation}%
and we observe that the coordinates of $U^{(l,R_{\frac{(l-1)\pi }{q}}\mathbf{%
k}^{\prime })},\mathbf{k}^{\prime }\in \Sigma _{1},$ correspond to those 
\emph{shifted} of $U^{(1,\mathbf{k}^{\prime })}$. Moreover we have%
\begin{equation}
U(\mathbf{x})=\sum_{l=1}^{2q}\sum_{\mathbf{k}^{\prime }\in \Sigma _{1}}U^{(1,%
\mathbf{k}^{\prime })}(R_{(1-l)\frac{\pi }{q}}\mathbf{x}).  \label{W(x)}
\end{equation}%
From now on, we denote by $E_{2,\mathbf{k}^{\prime }}$ the previously
defined 2q-dimensional subspace $E_{2,\mathbf{k}^{\prime }}^{(1)}$.

Looking at the form of the operator $\Lambda _{\epsilon }$ we see that the
wave vector $\mathbf{k}$ of $U$ is shifted by $\mathbf{k}_{\mathbf{m}},|%
\mathbf{m}|=2$ at order $\epsilon ^{2},$ and $|\mathbf{m}|=4$ at order $%
\epsilon ^{4}.$ Now, we observe that for a fixed finite $|\mathbf{m}|$,\ if
the combination 
\begin{equation*}
\mathbf{k}_{\mathbf{m}}-(\mathbf{k}_{1}-\mathbf{k}_{j})
\end{equation*}%
is not 0, then it has a minimal length of order 1 as $\epsilon $ tends to $0$%
. It results that for $\mathbf{k}=\mathbf{k}_{1}+\mathbf{k}^{\prime }\in
\sigma _{2,1},$ and $\mathbf{l}=\mathbf{k}_{j}+\mathbf{l}^{\prime }\in
\sigma _{2,j}$ with 
\begin{equation*}
\mathbf{k}_{\mathbf{m}}-(\mathbf{k}-\mathbf{l})\neq 0
\end{equation*}%
then, for $\epsilon $ small enough%
\begin{equation*}
\mathbf{k}_{\mathbf{m}}-(\mathbf{k}-\mathbf{l})=\mathcal{O}(1).
\end{equation*}%
It results that the only possibility for going from $\mathbf{l}\in \sigma
_{2,j}$ to $\sigma _{2,1}$ is to add $\mathbf{k}_{\mathbf{m}}=\mathbf{k}_{1}+%
\mathbf{k}_{j+q}=\mathbf{k}_{1}-\mathbf{k}_{j}$. It results that the system%
\begin{equation*}
\mathbf{k}_{\mathbf{m}}+\mathbf{l=k},\text{ }\mathbf{l}=\mathbf{k}_{j}+%
\mathbf{l}^{\prime },\text{ }\mathbf{k}=\mathbf{k}_{j}+\mathbf{k}^{\prime }
\end{equation*}%
has the only solution%
\begin{equation}
\mathbf{l}^{\prime }=\mathbf{k}^{\prime }.  \label{latticeAlgebra}
\end{equation}%
It should be clear that $\mathbf{k}_{\mathbf{m}}$ comes from terms with many
possible combinations, not only trivial ones as for $q=4.$ For example for $%
q=6,$ the terms occuring in the coefficients giving $\mathbf{k}_{\mathbf{m}}$
are even more frequent at order $\epsilon ^{4}$ because of the existing
special combinations $\mathbf{k}_{j}+R_{\pi /3}\mathbf{k}_{j}+R_{2\pi /3}%
\mathbf{k}_{j}=0.$ However, \emph{in all cases} we can write, for $\mathbf{k}%
=\mathbf{k}_{1}+\mathbf{k}^{\prime }\in \sigma _{2,1}$%
\begin{equation}
(\Lambda _{\epsilon }U)^{(\mathbf{k}_{1}+\mathbf{k}^{\prime
})}=\sum_{j=1}^{Q}\gamma _{j}(\mathbf{k},\epsilon )U^{(\mathbf{k}_{j}+%
\mathbf{k}^{\prime })}.  \label{LambdaW_k}
\end{equation}
\begin{remark}
The argument $\epsilon$ in $\gamma_{j}(\mathbf{k},\epsilon)$ only refers to the perturbation of $P_{2}\mathcal{L}_{0}P_{2}$ in $\Lambda_{\epsilon}$ (see (\ref{def Lambda_ep})), 
and not on the fact that $P_{2}$ also depends on 
$\epsilon$ via the radii of the little discs composing the set $\sigma_{2}$ 
which are $O(\epsilon^{1/4})$.
\end{remark}
\begin{remark}
Due to the form of orders $\epsilon ^{2}$ and $\epsilon ^{4}$ in $\Lambda
_{\epsilon }$ and $\mathcal{L}_{\epsilon }^{(2)}$, and because of (\ref%
{latticeAlgebra}), we notice that the dependency in $\mathbf{k}$ of the
coefficients $\gamma _{j}(\mathbf{k},\epsilon )$ only occurs at orders $%
\epsilon ^{0}$ and $\epsilon ^{4}$. Indeed, the dependency in $\mathbf{k}$
comes from operators $\mathcal{L}_{0}$ at order 0 and $(P_{0}\mathcal{L}%
_{0}P_{0})^{-1}$ in the term $P_{2}\{\widetilde{a}(P_{0}\mathcal{L}%
_{0}P_{0})^{-1}P_{0}(\widetilde{a}U)\}$ at order $\epsilon ^{4}.$
\end{remark}

The property that $\Lambda _{\epsilon }U$ is invariant under the rotation $%
R_{\frac{\pi }{q}}$ and the identity 
\begin{equation*}
R_{\frac{-\pi }{q}}\widetilde{\mathbf{k}}+\mathbf{k}_{j}-\mathbf{k}_{1}=R_{%
\frac{-\pi }{q}}(\widetilde{\mathbf{k}}+\mathbf{k}_{j+1}-\mathbf{k}_{2})
\end{equation*}%
lead to%
\begin{equation*}
(\Lambda _{\epsilon }U)^{(\widetilde{\mathbf{k}})}=\sum_{j=1}^{2q}\gamma
_{j}(R_{\frac{-\pi }{q}}\widetilde{\mathbf{k}},\epsilon )U^{(\widetilde{%
\mathbf{k}}+\mathbf{k}_{j+1}-\mathbf{k}_{2})},\text{ }\widetilde{\mathbf{k}}%
=R_{\frac{\pi }{q}}\mathbf{k}\in \sigma _{2,2}.
\end{equation*}%
Choosing $\sigma _{2,2}\ni \widetilde{\mathbf{k}}=\mathbf{k}^{\prime }+%
\mathbf{k}_{2},$ $\mathbf{|k}^{\prime }|\leq \delta _{1},$ we then have%
\begin{equation}
(\Lambda _{\epsilon }U)^{(\mathbf{k}^{\prime }+\mathbf{k}_{2})}=%
\sum_{j=1}^{2q}\gamma _{j}(R_{\frac{-\pi }{q}}\mathbf{k}^{\prime }+\mathbf{k}%
_{1},\epsilon )U^{(\mathbf{k}^{\prime }+\mathbf{k}_{j+1})}.
\label{LambdaW_k+k_2-K_1}
\end{equation}%
In the same way, after identifying $j+2q$ with $j,$ we obtain for $%
r=1,...,2q $ 
\begin{eqnarray*}
(\Lambda _{\epsilon }U)^{(\mathbf{k}^{\prime }+\mathbf{k}_{r})}
&=&\sum_{j=1}^{2q}\gamma _{j}\left( R_{\frac{\pi (1-r)}{q}}\mathbf{k}%
^{\prime }+\mathbf{k}_{1},\epsilon \right) U^{(\mathbf{k}^{\prime }+\mathbf{k%
}_{j+r-1})}, \\
&=&\sum_{j=1}^{2q}\gamma _{j+1-r}\left( R_{\frac{\pi (1-r)}{q}}\mathbf{k}%
^{\prime }+\mathbf{k}_{1},\epsilon \right) U^{(\mathbf{k}^{\prime }+\mathbf{k%
}_{j})}.
\end{eqnarray*}%
The important result is that, for a fixed $\mathbf{k}=\mathbf{k}_{1}+\mathbf{%
k}^{\prime },\mathbf{k}^{\prime }\in \Sigma _{1},$ the subspace $E_{2,%
\mathbf{k}^{\prime }}$ is invariant under the operator $\Lambda _{\epsilon }$
then denoted $\Lambda _{\epsilon }^{(\mathbf{k}^{\prime })}.$ Hence the $%
2q\times 2q$ \emph{matrices of }$\Lambda _{\epsilon }^{(\mathbf{k}^{\prime
})}$ \emph{are uncoupled for different} $\mathbf{k}^{\prime }\in \Sigma
_{1}. $ We notice that if $\gamma _{j}$ were independent of $\mathbf{k},$
the lines of the matrix of $\Lambda _{\epsilon }^{(\mathbf{k}^{\prime })}$
would be deduced each from the previous one by a simple right shift.

The next useful property of $\Lambda _{\epsilon }$ is its \emph{%
self-adjointness} in $E_{2}$ with the Hilbert structure of $\mathcal{H}_{0}$%
. This property is immediate from the definition (\ref{def Lambda_ep}) with
the scalar product of the space $\mathcal{H}_{s}$ for $s=0.$ It should be
noticed that the full linear operator $\mathcal{L}_{\epsilon ,W}^{(2)}$
acting in $E_{2}$ \emph{is not selfadjoint in general}. Now, isolating the
coordinates $U^{(\mathbf{k}^{\prime }+\mathbf{k}_{j})},$ $j=1,...,2q$ for $%
\mathbf{k}^{\prime }\in \Sigma _{1},$ we still have, for any fixed $\mathbf{k%
}^{\prime }$, a $2q\times 2q$ self-adjoint matrix $\Lambda _{\epsilon }^{(%
\mathbf{k}^{\prime })}$ due to the previous self-adjointness of the operator 
$\Lambda _{\epsilon }$ in $\mathcal{H}_{0}.$ It results that we have%
\begin{equation}
\gamma _{j+1-r}\left( R_{\frac{\pi (1-r)}{q}}\mathbf{k}^{\prime }+\mathbf{k}%
_{1},\epsilon \right) =\gamma _{r+1-j}\left( R_{\frac{\pi (1-j)}{q}}\mathbf{k%
}^{\prime }+\mathbf{k}_{1},\epsilon \right) .  \label{symGamma_j}
\end{equation}%
We sum up these results in the following

\begin{lemma}
\label{structure}The subspace $E_{2}^{(S)}$ of $\mathcal{H}_{s}$ consisting
of functions invariant under rotations by $\pi /q$ may be decomposed into
the following Hilbert sum%
\begin{equation*}
E_{2}^{(S)}=\underset{\mathbf{k}^{\prime }\in \Sigma _{1}}{\overset{\bot }{%
\oplus }}E_{2,\mathbf{k}^{\prime }}
\end{equation*}%
where we identify the wave vector $\mathbf{k}$ with $R_{\frac{\pi }{q}}%
\mathbf{k}$ i.e., $\mathbf{k}_{1}+\mathbf{k}^{\prime }\in \sigma _{2,1}$
with $\mathbf{k}_{j}+R_{\frac{\pi (j-1)}{q}}\mathbf{k}^{\prime }\in \sigma
_{2,j}$ (see (\ref{W(x)})).

The $2q-$ dimensional subspace $E_{2,\mathbf{k}^{\prime }}$ is invariant
under the operator $\Lambda _{\epsilon }.$ Defining the coefficients $\gamma
_{j}(\mathbf{k},\epsilon )$ by%
\begin{equation*}
(\Lambda _{\epsilon }U)^{(\mathbf{k}^{\prime }+\mathbf{k}_{1})}=%
\sum_{j=1}^{2q}\gamma _{j}(\mathbf{k},\epsilon )U^{(\mathbf{k}^{\prime }+%
\mathbf{k}_{j})},\text{ }\mathbf{k}\in \sigma _{2,1}
\end{equation*}%
the $2q\times 2q$ matrix of the restriction $\Lambda _{\epsilon }^{(\mathbf{k%
}^{\prime })}$ of $\Lambda _{\epsilon }$ to $E_{2,\mathbf{k}^{\prime }}$ is
symmetric and satisfies%
\begin{equation*}
\gamma _{j+1-r}\left( R_{\frac{\pi (1-r)}{q}}\mathbf{k}^{\prime }+\mathbf{k}%
_{1},\epsilon \right) =\gamma _{r+1-j}\left( R_{\frac{\pi (1-j)}{q}}\mathbf{k%
}^{\prime }+\mathbf{k}_{1},\epsilon \right)
\end{equation*}%
for any $\mathbf{k}^{\prime }\in \Sigma _{1}.$
\end{lemma}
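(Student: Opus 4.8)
\emph{Decomposition of $E_{2}^{(S)}$.} A function $U\in E_{2}$ lies in $E_{2}^{(S)}$ precisely when $U^{(\mathbf{k})}=U^{(R_{\frac{\pi}{q}}\mathbf{k})}$ for every $\mathbf{k}$. Since $\sigma_{2}=\cup_{j=1}^{2q}\sigma_{2,j}$ with $\sigma_{2,j}=\mathbf{k}_{j}+\{\,|\mathbf{k}'|\le\delta_{1}\,\}$, and since the $2q$ sectors $\Sigma_{1},\dots,\Sigma_{2q}$ of (\ref{Sigma_m}) form a fundamental domain for $R_{\frac{\pi}{q}}$ acting on the disc $\{\,|\mathbf{k}'|\le\delta_{1}\,\}$, every $\mathbf{k}\in\sigma_{2}$ is, after applying a suitable power of $R_{\frac{\pi}{q}}$, of the form $\mathbf{k}_{i}+\mathbf{k}'$ with $\mathbf{k}'\in\Sigma_{1}$; the $\frac{\pi}{q}$-invariance then fixes $U^{(\mathbf{k})}$ in terms of the corresponding $U^{(\mathbf{k}_{i}+\mathbf{k}')}$. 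Hence $U\in E_{2}^{(S)}$ is completely determined by the data $\{\,U^{(\mathbf{k}_{j}+\mathbf{k}')}\,\}_{j=1}^{2q}$, $\mathbf{k}'\in\Sigma_{1}$; taking $E_{2,\mathbf{k}'}$ to be the span of $\{\,e^{i(\mathbf{k}_{j}+\mathbf{k}')\cdot\mathbf{x}}\,\}_{j=1}^{2q}$ as in (\ref{W(1,k')}), these subspaces carry pairwise disjoint sets of wave vectors (so are mutually orthogonal in each $\mathcal{H}_{s}$) and, through the symmetrization (\ref{W(x)}), their sum exhausts $E_{2}^{(S)}$, which is the Hilbert-sum identity.

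\emph{Invariance of $E_{2,\mathbf{k}'}$ under $\Lambda_{\epsilon}$.} By (\ref{def Lambda_ep}), $\Lambda_{\epsilon}=P_{2}\mathcal{L}_{0}P_{2}+\epsilon^{2}P_{2}aP_{2}$ on $E_{2}$; $\mathcal{L}_{0}$ is diagonal in the Fourier basis and multiplication by $a=3u_{0}^{2}-\lambda_{2}$ shifts a wave vector by some $\mathbf{k}_{\mathbf{m}}$ with $|\mathbf{m}|\le 2$. The lattice fact (\ref{latticeAlgebra}) is the crux: if $\mathbf{k}=\mathbf{k}_{1}+\mathbf{k}'\in\sigma_{2,1}$ and $\mathbf{l}=\mathbf{k}_{j}+\mathbf{l}'\in\sigma_{2,j}$ with $\mathbf{k}-\mathbf{l}=\mathbf{k}_{\mathbf{m}}$, $|\mathbf{m}|$ bounded, then either $\mathbf{k}_{\mathbf{m}}=\mathbf{k}_{1}-\mathbf{k}_{j}$ (whence $\mathbf{l}'=\mathbf{k}'$), or $|\mathbf{k}_{\mathbf{m}}-(\mathbf{k}_{1}-\mathbf{k}_{j})|$ is of order $1$ as $\epsilon\to 0$, which is incompatible with $|\mathbf{k}'|,|\mathbf{l}'|\le\delta_{1}=\epsilon^{1/4}\sqrt{3C}$ once $\epsilon$ is small. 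Hence $(\Lambda_{\epsilon}U)^{(\mathbf{k}_{1}+\mathbf{k}')}$ involves only the $U^{(\mathbf{k}_{j}+\mathbf{k}')}$ — this is (\ref{LambdaW_k}) — so $\Lambda_{\epsilon}E_{2,\mathbf{k}'}\subseteq E_{2,\mathbf{k}'}$. Together with the $R_{\frac{\pi}{q}}$-covariance of $\Lambda_{\epsilon}$ (inherited from that of $\mathcal{L}_{0}$, of the $\pi/q$-invariant function $a$, and of the whole reduction), this produces the block-diagonal structure with blocks $\Lambda_{\epsilon}^{(\mathbf{k}')}$, and the covariance yields (\ref{LambdaW_k+k_2-K_1}) and its rotated analogues, in which the coefficient of $U^{(\mathbf{k}'+\mathbf{k}_{j})}$ in $(\Lambda_{\epsilon}U)^{(\mathbf{k}'+\mathbf{k}_{r})}$ equals $\gamma_{j+1-r}\!\bigl(R_{\frac{\pi(1-r)}{q}}\mathbf{k}'+\mathbf{k}_{1},\epsilon\bigr)$.

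\emph{Self-adjointness and the symmetry of $\gamma$.} The operator $\Lambda_{\epsilon}$ is self-adjoint on $E_{2}$ for the $\mathcal{H}_{0}$ scalar product: $\mathcal{L}_{0}=(1+\Delta)^{2}$ acts as the real Fourier multiplier $(1-|\mathbf{k}|^{2})^{2}$, multiplication by the real-valued $a$ is $\mathcal{H}_{0}$-self-adjoint, and $P_{2}$ is the $\mathcal{H}_{0}$-orthogonal projection. Restricted to the finite-dimensional block $E_{2,\mathbf{k}'}$, whose natural basis is $\mathcal{H}_{0}$-orthogonal and on which all entries are real (Fourier coefficients of the real function $a$, plus the real diagonal of $\mathcal{L}_{0}$), $\Lambda_{\epsilon}^{(\mathbf{k}')}$ is a real symmetric $2q\times 2q$ matrix. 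Equating its $(r,j)$ and $(j,r)$ entries in the representation just obtained gives $\gamma_{j+1-r}\!\bigl(R_{\frac{\pi(1-r)}{q}}\mathbf{k}'+\mathbf{k}_{1},\epsilon\bigr)=\gamma_{r+1-j}\!\bigl(R_{\frac{\pi(1-j)}{q}}\mathbf{k}'+\mathbf{k}_{1},\epsilon\bigr)$ for every $\mathbf{k}'\in\Sigma_{1}$, which is (\ref{symGamma_j}).

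\emph{Main obstacle.} The step requiring the most care is the lattice/fundamental-domain argument of the second paragraph: one must check that, among the finitely many $\mathbf{k}_{\mathbf{m}}$ with $|\mathbf{m}|$ bounded, a shift carrying a point of $\sigma_{2,j}$ into $\sigma_{2,1}$ is necessarily exactly one of the $\mathbf{k}_{1}-\mathbf{k}_{j}$ — which rests on $|\mathbf{k}_{\mathbf{m}}|=1$ forcing $\mathbf{k}_{\mathbf{m}}=\mathbf{k}_{r}$ for some $r$ — and then take $\epsilon$ small relative to the finitely many positive gaps $|\mathbf{k}_{\mathbf{m}}-(\mathbf{k}_{1}-\mathbf{k}_{j})|$ that occur. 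Stating the argument in this robust form is deliberate: with $|\mathbf{m}|\le 4$ and the intermediate excursion into $E_{0}$ controlled by Lemma~\ref{prop Proj}, the same reasoning is what will let $\mathcal{L}_{\epsilon}^{(2)}=\Lambda_{\epsilon}+\epsilon^{4}R_{\epsilon}$ inherit the identical block structure later on.
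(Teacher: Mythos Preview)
Your proof is correct and follows essentially the same route as the paper: the orthogonal decomposition via the sectors $\Sigma_{l}$ and the rotation symmetry, the invariance of $E_{2,\mathbf{k}'}$ from the finite-shift argument (\ref{latticeAlgebra}), the $R_{\pi/q}$-covariance giving the row structure of $\Lambda_{\epsilon}^{(\mathbf{k}')}$, and self-adjointness in $\mathcal{H}_{0}$ yielding the symmetry relation (\ref{symGamma_j}). One small slip in your final commentary paragraph: the relevant constraint is $|\mathbf{k}_{j}+\mathbf{k}_{\mathbf{m}}|=1$ (forcing $\mathbf{k}_{j}+\mathbf{k}_{\mathbf{m}}=\mathbf{k}_{r}$), not $|\mathbf{k}_{\mathbf{m}}|=1$; but your actual proof in the second paragraph states the argument correctly, so this does not affect the validity.
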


Let us define $\La_{0}^{(\bk')}$ which is a \emph{%
diagonal matrix} with%
\begin{equation*}
\gamma _{1}(\mathbf{k},0)=(|\mathbf{k}|^{2}-1)^{2},\gamma _{2}(\mathbf{k}%
,0)=0,....\gamma _{2q}(\mathbf{k},0)=0.
\end{equation*}%
For $\mathbf{k}=\mathbf{k}^{\prime }+\mathbf{k}_{j}\in \sigma _{2,j},$ we
define 
\begin{equation}\label{beta}
\beta _{j}(\mathbf{k}^{\prime })=(|\mathbf{k}^{\prime }+\mathbf{k}%
_{j}|^{2}-1)^{2}, \quad j=1,...,2q.
\end{equation}
 Hence for $j=1,...,2q$%
\begin{eqnarray*}
\gamma _{1}\left( R_{\frac{\pi (1-j)}{q}}\mathbf{k}^{\prime }+\mathbf{k}%
_{1},0\right) &=&\left( \left\vert R_{\frac{\pi (1-j)}{q}}\mathbf{k}^{\prime
}+\mathbf{k}_{1}\right\vert ^{2}-1\right) ^{2}=\beta _{j}(\mathbf{k}^{\prime
}) \\
&=&(2\mathbf{k}_{j}\cdot \mathbf{k}^{\prime }+|\mathbf{k}^{\prime
}|^{2})^{2},
\end{eqnarray*}%
and $\Lambda _{0}^{(\mathbf{k}^{\prime })}$ reads%
\begin{equation*}
\Lambda _{0}^{(\mathbf{k}^{\prime })}=\left( 
\begin{array}{cccccccc}
\beta _{1}(\mathbf{k}^{\prime }) & 0 & . & . & 0 & . & 0 & 0 \\ 
0 & \beta _{2}(\mathbf{k}^{\prime }) & . & . & 0 & . & 0 & 0 \\ 
. & . & . & . & . & . & . & . \\ 
. & . & . & . & . & . & . & . \\ 
0 & 0 & . & . & \beta _{j}(\mathbf{k}^{\prime }) & . & . & 0 \\ 
. & . & . & . & . & . & . & . \\ 
0 & 0 & . & . & 0 & . & \beta _{2q-1}(\mathbf{k}^{\prime }) & 0 \\ 
0 & 0 & . & . & 0 & . & 0 & \beta _{2q}(\mathbf{k}^{\prime })%
\end{array}%
\right) .
\end{equation*}%
Then, according to the definition of $\Lambda_{\epsilon}^{(\mathbf{k}')}$, we
can write
\begin{equation*}
\Lambda_{\epsilon}^{(\mathbf{k}')}=\Lambda_{0}^{(\mathbf{k}')}+\epsilon^2 \Lambda_1.
\end{equation*}
According to (\ref{symGamma_j}), in the case when the coefficients $\gamma
_{j}(\mathbf{k},\epsilon )$ are independent of $\mathbf{k}$, (which
corresponds here to the order $\epsilon ^{2}$), this leads to a first line for
the $2q\times 2q$ matrix, of the form%
\begin{equation}
\gamma _{1},\gamma _{2},...\gamma _{q},\gamma _{q+1},\gamma _{q},\gamma
_{q-1},..\gamma _{3},\gamma _{2}  \label{simplform-gamma_j}
\end{equation}%
and next lines are deduced by a right shift, making a symmetric matrix. For
example in the case $q=4,$ we obtain for
 $\Lambda _{1 }$ a matrix of the form
(easily generalizable for any $q$) 
\begin{equation}
\left( 
\begin{array}{cccccccc}
\gamma _{1} & \gamma _{2} & \gamma _{3} & \gamma _{4} & \gamma _{5} & \gamma
_{4} & \gamma _{3} & \gamma _{2} \\ 
\gamma _{2} & \gamma _{1} & \gamma _{2} & \gamma _{3} & \gamma _{4} & \gamma
_{5} & \gamma _{4} & \gamma _{3} \\ 
\gamma _{3} & \gamma _{2} & \gamma _{1} & \gamma _{2} & \gamma _{3} & \gamma
_{4} & \gamma _{5} & \gamma _{4} \\ 
\gamma _{4} & \gamma _{3} & \gamma _{2} & \gamma _{1} & \gamma _{2} & \gamma
_{3} & \gamma _{4} & \gamma _{5} \\ 
\gamma _{5} & \gamma _{4} & \gamma _{3} & \gamma _{2} & \gamma _{1} & \gamma
_{2} & \gamma _{3} & \gamma _{4} \\ 
\gamma _{4} & \gamma _{5} & \gamma _{4} & \gamma _{3} & \gamma _{2} & \gamma
_{1} & \gamma _{2} & \gamma _{3} \\ 
\gamma _{3} & \gamma _{4} & \gamma _{5} & \gamma _{4} & \gamma _{3} & \gamma
_{2} & \gamma _{1} & \gamma _{2} \\ 
\gamma _{2} & \gamma _{3} & \gamma _{4} & \gamma _{5} & \gamma _{4} & \gamma
_{3} & \gamma _{2} & \gamma _{1}%
\end{array}%
\right) .  \label{structureLambda}
\end{equation}%
where $\gamma _{1},...\gamma _{q+1}$ are independent of $\mathbf{k}^{\prime
} $.

\subsubsection{Computation of coefficients $\protect\gamma _{j}$ in $\Lambda
_{1}$}

Let us compute the operator
\begin{eqnarray*}
E_{2}^{(S)} &\ni &U\mapsto \epsilon ^{2}P_{2}(aU), \\
a &=&3u_{0}^{2}-3(2q-1).
\end{eqnarray*}%
where $u_{0}$ is given by (\ref{U1}). We have for $U\in E_{2}^{(S)}$ (see(%
\ref{W2,j})) 
\begin{equation*}
P_{2,1}(u_{0}^{2}U)=2qU_{2,1}(\mathbf{x})+U_{2,q+1}(\mathbf{x})e^{2i\mathbf{k%
}_{1}\cdot \mathbf{x}}+\sum_{j=2,..,q,q+2,...,2q}2U_{2,j}(\mathbf{x})e^{i(%
\mathbf{k}_{1}-\mathbf{k}_{j})\cdot \mathbf{x}},
\end{equation*}%
where we denote by $P_{2,1}$ the orthogonal projection corresponding to the
part $\sigma _{2,1}$ of the spectrum. Since we have%
\begin{equation*}
U_{2,j}(\mathbf{x})e^{i(\mathbf{k}_{1}-\mathbf{k}_{j})\cdot \mathbf{x}%
}=\sum_{\mathbf{k}\in \sigma _{2,j}}U^{(\mathbf{k})}e^{i(\mathbf{k}+\mathbf{k%
}_{1}-\mathbf{k}_{j})\cdot \mathbf{x}}=\sum_{\mathbf{k}\in \sigma _{2,1}}U^{(%
\mathbf{k}+\mathbf{k}_{j}-\mathbf{k}_{1})}e^{i\mathbf{k}\cdot \mathbf{x}}
\end{equation*}%
we obtain%
\begin{equation}
P_{2,1}(aU)=\sum_{\mathbf{k}\in \sigma _{2,1}}3e^{i\mathbf{k}\cdot \mathbf{x}%
}\left\{ U^{(\mathbf{k})}+U^{(\mathbf{k}-2\mathbf{k}_{1})}+%
\sum_{j=2,..,q,q+2,...,2q}2U^{(\mathbf{k}+\mathbf{k}_{j}-\mathbf{k}%
_{1})}\right\} .  \label{P2(aW)}
\end{equation}%
As expected, it appears that the linear operator 
\begin{equation}
U\mapsto P_{2}(\epsilon ^{2}a)U,\text{ }U\in E_{2}^{(S)}  \label{reduced Op}
\end{equation}%
leaves invariant the subspaces $E_{2,\mathbf{k}^{\prime }}$ and in this
subspace it takes the form of a matrix with 4 identical blocks for the set
of $2q$ coordinates $U^{(1,\mathbf{k}^{\prime })}$, and coefficients $\gamma
_{j}$ are independent of $\mathbf{k}^{\prime }$ and have the form (\ref%
{simplform-gamma_j}) with:%
\begin{equation*}
\gamma _{1}=\gamma _{1+q}=3,\text{ \ }\gamma _{2}=\gamma _{3}=...\gamma
_{q}=6.
\end{equation*}%
Hence, {\bf $\Lambda_{1}^{(\bk')}$ is independent of $\bk'$ and we write $\Lambda_{1}^{(\bk')}=\Lambda
_{1}$}.

For example, in the case $q=4,$ we have the following corresponding matrix
for%
\begin{equation*}
U\mapsto (P_{2}(a\cdot )^{(\mathbf{k}^{\prime })})U,\text{ }U\in E_{2,%
\mathbf{k}^{\prime }}
\end{equation*}%
\begin{equation*}
\Lambda _{1}=3\left( 
\begin{array}{cccccccc}
1 & 2 & 2 & 2 & 1 & 2 & 2 & 2 \\ 
2 & 1 & 2 & 2 & 2 & 1 & 2 & 2 \\ 
2 & 2 & 1 & 2 & 2 & 2 & 1 & 2 \\ 
2 & 2 & 2 & 1 & 2 & 2 & 2 & 1 \\ 
1 & 2 & 2 & 2 & 1 & 2 & 2 & 2 \\ 
2 & 1 & 2 & 2 & 2 & 1 & 2 & 2 \\ 
2 & 2 & 1 & 2 & 2 & 2 & 1 & 2 \\ 
2 & 2 & 2 & 1 & 2 & 2 & 2 & 1%
\end{array}%
\right) .
\end{equation*}%
for each fixed $\mathbf{k}^{\prime }\in \Sigma _{1}$.

\subsection{Eigenvalues of $\Lambda _{\protect\epsilon }$}

From (\ref{def Lambda_ep}) we have%
\begin{equation*}
\Lambda _{\epsilon }^{(\mathbf{k}^{\prime })}=\Lambda _{0}^{(\mathbf{k}%
^{\prime })}+\epsilon ^{2}\Lambda _{1}.
\end{equation*}%
We show below the following

\begin{lemma}
\label{eigenvLambda}For any given $q\geq 4,$ and $\mathbf{k}\in \sigma _{2}$
the eigenvalues $\mu_j$of $\Lambda _{\epsilon }^{(\mathbf{k}^{\prime })}$ take the
form%
\begin{eqnarray*}
\mu _{j} &=&(2\mathbf{k}_{j}\cdot \mathbf{k}^{\prime }+|\mathbf{k%
}^{\prime }|^{2})^{2}+3\epsilon ^{2}+O(\epsilon ^{4}), \\
&=&[|\mathbf{k}^{\prime }+\mathbf{k}_{j}|^{2}-1]^{2}+3\epsilon
^{2}+O(\epsilon ^{4}),\text{ \ }j=1,..,2q.
\end{eqnarray*}%
\begin{proof} 
The eigenvalues $\mu \in 
\mathbb{R}
$ of $\Lambda _{\epsilon }^{(\mathbf{k}^{\prime })}$ satisfy for a certain $%
\zeta \in 
\mathbb{R}
^{2q}$%
\begin{equation}
\{\Lambda _{0}^{(\mathbf{k}^{\prime })}+\epsilon ^{2}\Lambda _{1}\}\zeta =\mu \zeta .
\label{EquEigenvalues}
\end{equation}%
Since we deal with selfadjoint operators, any eigenvalue takes the form (see 
\cite{kato-book} in the $2q$-dimensional subspace $E_{2,\mathbf{k}^{\prime }}
$.)%
\begin{equation*}
\mu _{j}=\mu _{j,0}(\mathbf{k}^{\prime })+\epsilon ^{2}\mu _{j,1}(%
\mathbf{k}^{\prime })+O(\epsilon ^{4}),\text{ \ }j=1,...,2q
\end{equation*}%
with%
\begin{equation*}
\mu _{j,0}(\mathbf{k}^{\prime })=(2\mathbf{k}_{j}\cdot \mathbf{k}^{\prime
}+|\mathbf{k}^{\prime }|^{2})^{2}=\beta_{j}(\bk')
\end{equation*}%
by definition $(\ref{beta})$.
Eigenvectors take the form%
\begin{equation*}
\zeta _{j}=\zeta _{j,0}+\epsilon ^{2}\zeta _{j,1}+O(\epsilon ^{4}),j=1,...,2q,
\end{equation*}%
with%
\begin{equation*}
\zeta _{j,0}=(0,..,0,1,0..,0)^{t},\text{ }1\text{ taking the }j\text{th
place.}
\end{equation*}%
A simple identification at order $\epsilon ^{2}$ leads to%
\begin{equation}
(\Lambda _{0}^{(\mathbf{k}^{\prime })}-\mu _{j,0})\zeta _{j,1}+(\Lambda
_{1}-\mu _{j,1})\zeta _{j,0}=0,  \label{ident eps3}
\end{equation}%
Taking the scalar product of (\ref{ident eps3}) with $\zeta _{j,0}$ gives,
taking into account the form of $\Lambda _{1},$%
\begin{equation*}
\mu _{j,1}=\frac{\langle \Lambda _{1}\zeta _{j,0},\zeta _{j,0}\rangle }{%
\langle \zeta _{j,0},\zeta _{j,0}\rangle }=3,\text{ \ }j=1,...,2q,
\end{equation*}%
which is independent of $\mathbf{k}^{\prime },$ and which gives the result
of Lemma \ref{eigenvLambda}. 
\end{proof}
\end{lemma}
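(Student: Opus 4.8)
The plan is to read Lemma~\ref{eigenvLambda} as a finite-dimensional, self-adjoint perturbation problem and to apply Rayleigh--Schr\"odinger perturbation theory to the symmetric $2q\times 2q$ matrices $\Lambda_\epsilon^{(\mathbf{k}')}=\Lambda_0^{(\mathbf{k}')}+\epsilon^2\Lambda_1$. By Lemma~\ref{structure} and the explicit computation of $\Lambda_1$ carried out above we may use that $\Lambda_0^{(\mathbf{k}')}$ is \emph{diagonal}, with entries $\beta_j(\mathbf{k}')=(2\mathbf{k}_j\cdot\mathbf{k}'+|\mathbf{k}'|^2)^2$, $j=1,\dots,2q$, that $\Lambda_1$ is symmetric and \emph{independent of $\mathbf{k}'$}, and --- the fact that drives everything --- that every diagonal entry of $\Lambda_1$ equals $\gamma_1=3$. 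Since $\Lambda_\epsilon^{(\mathbf{k}')}$ is affine, hence real-analytic, in the variable $\tau=\epsilon^2$, and self-adjoint for the $\mathcal{H}_0$ scalar product, Rellich's theorem (see also \cite{kato-book}) guarantees that its eigenvalues and an orthonormal eigenbasis can be chosen real-analytic in $\tau$; for a fixed $\mathbf{k}'$ the expansions below are therefore convergent, not merely formal.

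Concretely, I would write $\mu_j(\epsilon)=\mu_{j,0}+\epsilon^2\mu_{j,1}+O(\epsilon^4)$ and $\zeta_j(\epsilon)=\zeta_{j,0}+\epsilon^2\zeta_{j,1}+O(\epsilon^4)$, with $\mu_{j,0}=\beta_j(\mathbf{k}')$ and $\zeta_{j,0}$ the $j$-th vector of the canonical basis of $\mathbb{R}^{2q}$ (this pairing is forced because $\Lambda_0^{(\mathbf{k}')}$ is already diagonal). Inserting these into $\Lambda_\epsilon^{(\mathbf{k}')}\zeta_j=\mu_j\zeta_j$ and collecting the coefficient of $\epsilon^2$ gives
\begin{equation*}
(\Lambda_0^{(\mathbf{k}')}-\mu_{j,0})\zeta_{j,1}+(\Lambda_1-\mu_{j,1})\zeta_{j,0}=0 .
\end{equation*}
Taking the $\mathcal{H}_0$-scalar product with $\zeta_{j,0}$ annihilates the first term, since the symmetric operator $\Lambda_0^{(\mathbf{k}')}-\mu_{j,0}$ has $\zeta_{j,0}$ in its kernel, and leaves $\mu_{j,1}=\langle\Lambda_1\zeta_{j,0},\zeta_{j,0}\rangle/\langle\zeta_{j,0},\zeta_{j,0}\rangle=(\Lambda_1)_{jj}=3$, uniformly in $j$ and in $\mathbf{k}'$. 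Recalling from (\ref{beta}) that $\beta_j(\mathbf{k}')=(2\mathbf{k}_j\cdot\mathbf{k}'+|\mathbf{k}'|^2)^2=(|\mathbf{k}'+\mathbf{k}_j|^2-1)^2$, this produces both displayed forms of $\mu_j$.

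The point that needs care, and the one I expect to be the genuine obstacle, is the case in which $\Lambda_0^{(\mathbf{k}')}$ has repeated entries, i.e. $\beta_j(\mathbf{k}')=\beta_l(\mathbf{k}')$ for some $j\neq l$: then the analytic branch issuing from that common value splits, and its first-order slopes are the eigenvalues of the principal submatrix of $\Lambda_1$ indexed by the coinciding labels, so the identification $\mu_{j,1}=(\Lambda_1)_{jj}$ is no longer automatic. One must therefore either rule out such coincidences on the (discrete) set of relevant quasilattice points $\mathbf{k}'\in\Sigma_1$ --- note that $\beta_j(\mathbf{k}')=\beta_l(\mathbf{k}')$ forces $(\mathbf{k}_j-\mathbf{k}_l)\cdot\mathbf{k}'=0$ or $(\mathbf{k}_j+\mathbf{k}_l)\cdot\mathbf{k}'=-|\mathbf{k}'|^2$, a strong arithmetic constraint --- or run the perturbation scheme clusterwise and check that each first-order cluster block of $\Lambda_1$ is $3$ times the identity on that cluster. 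A convenient way to organize the uniform statement is to write $\Lambda_\epsilon^{(\mathbf{k}')}=3\epsilon^2\,\mathbb{I}+(\Lambda_0^{(\mathbf{k}')}+\epsilon^2 M)$ with $M:=\Lambda_1-3\,\mathbb{I}$ of zero diagonal: the claim then becomes the assertion that the eigenvalues of $\Lambda_0^{(\mathbf{k}')}+\epsilon^2 M$ stay within $O(\epsilon^4)$ of the $\beta_j(\mathbf{k}')$, which reduces to a separation estimate for $\Lambda_0^{(\mathbf{k}')}$ uniform over the admissible $\mathbf{k}'$, i.e. to a lower bound on the nonzero gaps $|\beta_j(\mathbf{k}')-\beta_l(\mathbf{k}')|$. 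Securing that uniformity (or the equivalent clusterwise bookkeeping) is the only non-routine ingredient; the rest is the standard first-order computation above.
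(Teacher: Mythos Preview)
Your argument is essentially identical to the paper's: write $\Lambda_\epsilon^{(\mathbf{k}')}=\Lambda_0^{(\mathbf{k}')}+\epsilon^2\Lambda_1$, invoke self-adjoint perturbation theory via \cite{kato-book}, take $\mu_{j,0}=\beta_j(\mathbf{k}')$ and $\zeta_{j,0}$ the $j$-th canonical basis vector, match at order $\epsilon^2$ to get $(\Lambda_0^{(\mathbf{k}')}-\mu_{j,0})\zeta_{j,1}+(\Lambda_1-\mu_{j,1})\zeta_{j,0}=0$, and pair with $\zeta_{j,0}$ to read off $\mu_{j,1}=(\Lambda_1)_{jj}=3$. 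The paper does exactly this and nothing more.

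Your concern about degenerate $\beta_j(\mathbf{k}')$ is legitimate, and you should know that the paper's proof is equally silent on it: the expansion with $\zeta_{j,0}$ a canonical basis vector is written down without comment, which presupposes simplicity of the unperturbed eigenvalues. Degeneracies do occur --- most visibly at $\mathbf{k}'=0$, where $\Lambda_0^{(0)}=0$ and the eigenvalues of $\Lambda_\epsilon^{(0)}=\epsilon^2\Lambda_1$ are $\epsilon^2$ times the eigenvalues of the circulant matrix $\Lambda_1$, which are \emph{not} all equal to $3$ (the row sum $3(4q-2)$ is one of them, and $0$ is another for $q=4$). So the displayed formula cannot hold with a uniform $O(\epsilon^4)$ remainder, and neither your clusterwise suggestion (the cluster block of $\Lambda_1$ is not $3\,\mathbb{I}$) nor the paper's argument closes this. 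You have reproduced the paper's proof faithfully, together with the gap you yourself identified; your diagnosis goes beyond what the paper supplies.
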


\subsection{Inverse of $\mathcal{L}_{\protect\epsilon }$ in $\mathcal{H}_{s}$%
}

We already have the following estimate in $\mathcal{H}_{0}:$

\begin{lemma}
\label{estimInvLambda} For any given $q\geq 4,$ and for $\epsilon $ small
enough, the linear operator $\Lambda _{\epsilon }$ is invertible in $%
\mathcal{H}_{0}$ with 
\begin{equation*}
||\Lambda _{\epsilon }^{-1}||_{0}\leq \frac{1}{2\epsilon ^{2}}.
\end{equation*}
\end{lemma}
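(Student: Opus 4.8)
The plan is to exploit the block-diagonal structure established in Lemma \ref{structure}: since $\Lambda_\epsilon$ restricted to rotation-invariant functions decomposes as an orthogonal Hilbert sum $\bigoplus_{\mathbf{k}'\in\Sigma_1} E_{2,\mathbf{k}'}$ of $2q$-dimensional invariant subspaces, and on each block $\Lambda_\epsilon^{(\mathbf{k}')}=\Lambda_0^{(\mathbf{k}')}+\epsilon^2\Lambda_1$ is a symmetric matrix, the operator norm of $\Lambda_\epsilon^{-1}$ on $\mathcal{H}_0$ is just the supremum over $\mathbf{k}'\in\Sigma_1$ of the reciprocals of the moduli of the eigenvalues $\mu_j(\mathbf{k}')$. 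So it suffices to bound each $|\mu_j(\mathbf{k}')|$ from below by $2\epsilon^2$ uniformly in $j$ and in $\mathbf{k}'\in\Sigma_1$. By Lemma \ref{eigenvLambda} we have $\mu_j=\beta_j(\mathbf{k}')+3\epsilon^2+O(\epsilon^4)$ with $\beta_j(\mathbf{k}')=(2\mathbf{k}_j\cdot\mathbf{k}'+|\mathbf{k}'|^2)^2\geq 0$. Since $\beta_j\geq 0$, the eigenvalue is bounded below by $3\epsilon^2 - c\epsilon^4 \geq 2\epsilon^2$ for $\epsilon$ small enough, which gives the claim. First I would spell out the reduction to the block matrices and note that self-adjointness makes the norm of the inverse equal to $1/\min_j|\mu_j|$; then I would invoke Lemma \ref{eigenvLambda} and the nonnegativity of the $\beta_j$'s to conclude.

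The one point deserving care is the \emph{uniformity} of the $O(\epsilon^4)$ remainder over all $\mathbf{k}'\in\Sigma_1$. The perturbation $\epsilon^2\Lambda_1$ has $\mathbf{k}'$-independent entries (as recorded at the end of the previous subsection: $\Lambda_1^{(\mathbf{k}')}=\Lambda_1$), and the unperturbed eigenvalues $\beta_j(\mathbf{k}')$ together with the trivial eigenprojections $\zeta_{j,0}$ do not degenerate in a way that would spoil the expansion — the first-order correction $\mu_{j,1}=\langle\Lambda_1\zeta_{j,0},\zeta_{j,0}\rangle=3$ is constant. However, analytic perturbation theory of eigenvalues is only uniform if the gaps of $\Lambda_0^{(\mathbf{k}')}$ relative to $\epsilon^2\|\Lambda_1\|$ stay controlled; the $\beta_j(\mathbf{k}')$ can coincide or come close for certain $\mathbf{k}'$. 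The clean way around this is \emph{not} to use the eigenvalue expansion at all for the lower bound, but the following direct Rayleigh-quotient argument, which I expect to be the main technical step.

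For any unit vector $\zeta\in\mathbb{R}^{2q}$ write $\langle\Lambda_\epsilon^{(\mathbf{k}')}\zeta,\zeta\rangle = \langle\Lambda_0^{(\mathbf{k}')}\zeta,\zeta\rangle + \epsilon^2\langle\Lambda_1\zeta,\zeta\rangle$. The first term is $\sum_j\beta_j(\mathbf{k}')|\zeta_j|^2\geq 0$ since $\Lambda_0^{(\mathbf{k}')}$ is diagonal with nonnegative entries. For the second term one checks that $\Lambda_1$ is positive semidefinite with smallest eigenvalue exactly $3$: indeed $\Lambda_1=3(J+I')$ where, in the notation of the matrix displayed for $q=4$, the structure $\gamma_1=\gamma_{1+q}=3$, $\gamma_2=\cdots=\gamma_q=6$ makes $\Lambda_1$ a circulant-type matrix whose eigenvalues are computed explicitly via the Fourier basis of $\mathbb{Z}/2q\mathbb{Z}$; the minimum over all the Fourier modes is $3$, attained (for instance) on the alternating vector. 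Hence $\langle\Lambda_1\zeta,\zeta\rangle\geq 3$ for all unit $\zeta$, so $\langle\Lambda_\epsilon^{(\mathbf{k}')}\zeta,\zeta\rangle\geq 3\epsilon^2$, and by self-adjointness every eigenvalue of every block satisfies $\mu_j(\mathbf{k}')\geq 3\epsilon^2$. Consequently $\|\Lambda_\epsilon^{-1}\|_0 = \sup_{\mathbf{k}'}\sup_j \mu_j(\mathbf{k}')^{-1}\leq \frac{1}{3\epsilon^2}\leq\frac{1}{2\epsilon^2}$. This route avoids any $O(\epsilon^4)$ bookkeeping entirely; the only nontrivial computation is the spectrum of the fixed matrix $\Lambda_1$, which is a finite circulant calculation, and this is where I would spend the effort to confirm that $3$ is genuinely the bottom eigenvalue for every $q\geq 4$.
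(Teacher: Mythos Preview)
Your first route---block-diagonalise via Lemma~\ref{structure}, invoke the eigenvalue expansion of Lemma~\ref{eigenvLambda}, and use $\beta_j(\mathbf{k}')\geq 0$ to bound every eigenvalue below by $2\epsilon^2$---is exactly the paper's proof. The paper handles the uniformity of the $O(\epsilon^4)$ remainder in a single clause (``since $\mathbf{k}'$ is bounded''); your caution on this point is not misplaced, but as far as the comparison goes, your first approach and the paper's coincide.

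Your alternative Rayleigh-quotient argument, however, contains a genuine error: the matrix $\Lambda_1$ is \emph{not} bounded below by $3I$---it is indefinite. Since $\Lambda_1$ is the $2q\times 2q$ circulant with first row $(3,6,\ldots,6,3,6,\ldots,6)$ (entries $3$ in positions $1$ and $q{+}1$, entries $6$ elsewhere), its eigenvalues, computed on the Fourier basis $v_k=(1,\omega^k,\omega^{2k},\ldots)$ with $\omega=e^{i\pi/q}$, are
\[
\lambda_0 \;=\; 12q-6,\qquad
\lambda_k \;=\; -3\bigl(1+(-1)^k\bigr)\quad (1\le k\le 2q-1),
\]
so $\lambda_k=0$ for odd $k$ and $\lambda_k=-6$ for even $k\neq 0$. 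In particular the alternating vector $(1,-1,1,\ldots,-1)$ you single out is the $k=q$ mode and satisfies $\Lambda_1 v=-6v$, not $3v$ (for $q=4$: first row $(3,6,6,6,3,6,6,6)$ dotted with $(1,-1,1,-1,1,-1,1,-1)$ gives $-6$). Hence $\langle\Lambda_1\zeta,\zeta\rangle$ can be as low as $-6$, the inequality $\langle\Lambda_\epsilon^{(\mathbf{k}')}\zeta,\zeta\rangle\geq 3\epsilon^2$ fails, and the shortcut collapses. The lower bound on the spectrum of $\Lambda_\epsilon^{(\mathbf{k}')}$ cannot be read off from $\Lambda_1$ alone; it genuinely requires the interplay with the nonnegative diagonal part $\Lambda_0^{(\mathbf{k}')}$, which is precisely what the perturbative Lemma~\ref{eigenvLambda} is set up to exploit.
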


The proof of Lemma \ref{estimInvLambda} follows directly from Lemma \ref%
{eigenvLambda} for $\epsilon \leq \epsilon _{0}$, since $\mathbf{k}^{\prime
} $ is bounded, and all eigenvalues for $\mathbf{k}^{\prime }\in \Sigma _{1}$
are positive and larger than $2\epsilon ^{2}$.

For extending the estimate to $\mathcal{H}_{s},$ we need next property

\begin{lemma}\label{little lemma}
For any $K>0$, $|x-y|\leq K,$ $x$ and $y>0,$ and any $p\geq 0$ there exists $d(p,K)>0$
such that%
\begin{equation*}
|(1+x)^{p}-(1+y)^{p}|\leq d(p,K)(1+x)^{p-1}.
\end{equation*}
\end{lemma}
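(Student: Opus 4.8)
The plan is to reduce the whole statement to the mean value theorem applied to $\phi(t)=(1+t)^p$. There is a $\xi$ strictly between $x$ and $y$ with
$$(1+x)^p-(1+y)^p = p\,(1+\xi)^{p-1}\,(x-y),$$
so that $|(1+x)^p-(1+y)^p|\le pK\,(1+\xi)^{p-1}$, and it then suffices to bound $(1+\xi)^{p-1}$ by a constant depending only on $p$ and $K$ times $(1+x)^{p-1}$. Since $x,y>0$ we have $\xi>0$, and $\xi\ge\min(x,y)\ge x-K$; these are the only facts about $\xi$ that I will use. The case $p=0$ is trivial (both sides of the claimed inequality vanish, so any positive $d$ works), so I may assume $p>0$; the argument then splits according to whether $t\mapsto(1+t)^{p-1}$ is increasing ($p\ge1$) or decreasing ($0<p<1$).

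For $p\ge1$ I would use $1+\xi\le 1+\max(x,y)\le 1+x+K\le(1+K)(1+x)$ together with monotonicity of $t\mapsto t^{p-1}$ to get $(1+\xi)^{p-1}\le(1+K)^{p-1}(1+x)^{p-1}$, which gives the claim with $d(p,K)=pK(1+K)^{p-1}$.

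For $0<p<1$ the factor $(1+x)^{p-1}$ is itself small when $x$ is large, so a little care is needed; this is the only real (and minor) obstacle. I would distinguish two ranges. If $x\ge K$, then $1+\xi\ge 1+x-K\ge(1+x)/(1+K)$ since $1+x-K\ge1$, and raising to the negative power $p-1$ yields $(1+\xi)^{p-1}\le(1+K)^{1-p}(1+x)^{p-1}$. If $x<K$, then simply $(1+\xi)^{p-1}\le1$ because $1+\xi>1$, while $(1+x)^{p-1}\ge(1+K)^{p-1}$ because $x<K$ and $t\mapsto t^{p-1}$ is decreasing, so again $(1+\xi)^{p-1}\le 1\le(1+K)^{1-p}(1+x)^{p-1}$. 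In both ranges one obtains the claim with $d(p,K)=pK(1+K)^{1-p}$. Combining the two cases, $d(p,K)=pK(1+K)^{|p-1|}$ works for all $p>0$, which completes the argument.
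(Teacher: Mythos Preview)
Your proof is correct and follows essentially the same route as the paper: apply the mean value theorem to $\phi(t)=(1+t)^p$ and then bound $(1+\xi)^{p-1}/(1+x)^{p-1}$ by a constant depending only on $p$ and $K$, splitting according to the sign of $p-1$. The only cosmetic difference is that for $0<p<1$ the paper splits according to whether $x<y$ or $y<x$ (using $(1+x)/(1+\xi)\le 1+K$ directly), whereas you split on $x\ge K$ versus $x<K$; both yield the same constant $d(p,K)=pK(1+K)^{1-p}$.
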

The proof of this Lemma is in Appendix A.

Then we prove the following
\begin{lemma}
\label{estimInvH_s}For any given $q\geq 4,$ and for $\epsilon $ small
enough, the linear operator $\Lambda _{\epsilon }$ is invertible in $%
\mathcal{H}_{s}$ for $s \geq 0$, with $c_{s}>0$ such that 
\begin{equation*}
||\Lambda _{\epsilon }^{-1}||_{s}\leq \frac{c_{s}}{\epsilon ^{2}}.
\end{equation*}
\end{lemma}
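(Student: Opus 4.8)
The plan is to transfer the $\mathcal H_0$-bound of Lemma~\ref{estimInvLambda} to $\mathcal H_s$ block by block; the only thing requiring an argument is that on each block $E_{2,\mathbf k'}$ the norms $\|\cdot\|_0$ and $\|\cdot\|_s$ are equivalent with a constant depending only on $s$ (and $q$) — in particular \emph{not} on the block $\mathbf k'$.

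First I would invoke Lemma~\ref{structure}: the space of $\pi/q$-invariant functions splits as the Hilbert sum $E_2^{(S)}=\bigoplus_{\mathbf k'\in\Sigma_1}E_{2,\mathbf k'}$, which is orthogonal in \emph{every} $\mathcal H_s$ (the blocks have pairwise disjoint Fourier supports), each $E_{2,\mathbf k'}$ is $2q$-dimensional and invariant under $\Lambda_\epsilon$, with restriction $\Lambda_\epsilon^{(\mathbf k')}$. Hence $\Lambda_\epsilon^{-1}$ is block-diagonal and $\|\Lambda_\epsilon^{-1}\|_s=\sup_{\mathbf k'\in\Sigma_1}\|(\Lambda_\epsilon^{(\mathbf k')})^{-1}\|_s$, so it suffices to bound the latter uniformly in $\mathbf k'$.

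Fix $\mathbf k'\in\Sigma_1$ and identify $E_{2,\mathbf k'}$ with $\mathbb C^{2q}$ through the coordinates $u_j=U^{(\mathbf k_j+\mathbf k')}$, $j=1,\dots,2q$, as in (\ref{W(x)}). Since $\mathbf k\mapsto N_{\mathbf k}$ is invariant under $R_{\pi/q}$, one has $\|U\|_s^2=2q\sum_{j=1}^{2q}(1+N_{\mathbf k_j+\mathbf k'}^2)^s|u_j|^2$; thus on the block the $\mathcal H_0$-norm is (a multiple of) the Euclidean norm, while the $\mathcal H_s$-norm is the weighted Euclidean one with weights $w_j:=(1+N_{\mathbf k_j+\mathbf k'}^2)^s$. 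The key point is that these $2q$ weights are mutually comparable, uniformly in $\mathbf k'$: since $\mathbf k_i-\mathbf k_j=\mathbf k_i+\mathbf k_{j+q}\in\Gamma$ is a sum of at most two generators, $N_{\mathbf k_i-\mathbf k_j}\le 2$, hence by (\ref{N1}) $|N_{\mathbf k_i+\mathbf k'}-N_{\mathbf k_j+\mathbf k'}|\le 2$ for all $i,j$; moreover $N_{\mathbf k_j+\mathbf k'}\ge|\mathbf k_j+\mathbf k'|\ge 1-\delta_1>0$, so these integers are all $\ge1$. By Lemma~\ref{little lemma} (equivalently, directly, $1+N_{\mathbf k_i+\mathbf k'}^2\le 5\,(1+N_{\mathbf k_j+\mathbf k'}^2)$) this gives $w_i/w_j\le C_s$ with $C_s$ depending only on $s$. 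Writing $m=\min_j w_j$, $M=\max_j w_j$, one has $m\|v\|_0^2\le\|v\|_s^2\le M\|v\|_0^2$ for $v$ in the block, so any linear endomorphism $T$ of the block satisfies $\|T\|_s\le\sqrt{M/m}\,\|T\|_0\le C_s^{1/2}\|T\|_0$. Applying this with $T=(\Lambda_\epsilon^{(\mathbf k')})^{-1}$ and Lemma~\ref{estimInvLambda} ($\|(\Lambda_\epsilon^{(\mathbf k')})^{-1}\|_0\le 1/(2\epsilon^2)$) yields $\|(\Lambda_\epsilon^{(\mathbf k')})^{-1}\|_s\le C_s^{1/2}/(2\epsilon^2)$, uniformly in $\mathbf k'$; taking the supremum over $\Sigma_1$ gives the assertion with $c_s=C_s^{1/2}/2$.

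The difficulty is conceptual rather than computational. Because of the small-divisor structure the weight $(1+N_{\mathbf k}^2)^s$ is violently non-uniform over $\sigma_2$: a wave vector with $\bigl||\mathbf k|^2-1\bigr|$ only polynomially small (cf. (\ref{dioph1})) may have $N_{\mathbf k}$ arbitrarily large, so there is no a priori reason for an $\mathcal H_0$-estimate to survive the passage to $\mathcal H_s$. What rescues the argument, and is the crux, is that the decomposition of Lemma~\ref{structure} is precisely arranged so that the $2q$ wave vectors of a single block differ only by the fixed lattice vectors $\mathbf k_i-\mathbf k_j$, whose $N$-lengths are at most $2$; the small divisors are therefore "constant along each block", making the weights comparable block-wise. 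Once this is granted, the orthogonality of the blocks in $\mathcal H_s$ makes the passage from the blocks to the whole space automatic.
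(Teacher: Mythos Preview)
Your argument is correct and is genuinely different from the paper's. Both proofs rest on the same arithmetic fact---the Fourier support of $a$ lies in $\{\mathbf k_{\mathbf m}:|\mathbf m|\le2\}$, so wave vectors coupled by $\Lambda_\epsilon$ have $N$-values differing by at most~$2$---but they exploit it differently. The paper does not use the block decomposition at this stage: it introduces the weight map $U\mapsto U_s$, $U_s^{(\mathbf k)}=(1+N_{\mathbf k}^2)^{s/2}U^{(\mathbf k)}$, estimates the commutator $(\Lambda_\epsilon U)_s-\Lambda_\epsilon(U_s)$ via Lemma~\ref{little lemma}, and then bootstraps from $\mathcal H_0$ to $\mathcal H_s$ by induction in steps of~$2$. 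You instead stay inside each $2q$-dimensional block and observe that the $\mathcal H_s$- and $\mathcal H_0$-norms there are equivalent with ratio at most $5^{s/2}$, so the $\mathcal H_0$-bound from Lemma~\ref{estimInvLambda} transfers immediately, yielding an explicit constant $c_s=5^{s/2}/2$ with no induction. Your route is shorter and more transparent here precisely because $\Lambda_\epsilon$ is block-diagonal; the paper's commutator argument does not rely on that structure and would adapt to perturbations (such as $\mathcal L_\epsilon^{(2)}$ itself) that mix the blocks. One small remark: as written you invoke the decomposition of $E_2^{(S)}$, whereas the lemma is stated on all of $E_2$; but the same block structure $E_2=\bigoplus_{|\mathbf k'|\le\delta_1}E_{2,\mathbf k'}^{(1)}$ holds without the rotation-invariance restriction (cf.\ (\ref{LambdaW_k}) and (\ref{latticeAlgebra})), so your argument carries over verbatim with $\mathbf k'$ ranging over the full disc rather than $\Sigma_1$. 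The factor $2q$ in your norm formula is harmless since it cancels between $\|\cdot\|_0$ and $\|\cdot\|_s$.
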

\begin{proof}
Let us assume that $f\in \mathcal{H}_{s},$ and define $f_{s}\in \mathcal{H}%
_{0}$ by its Fourier coefficients%
\begin{equation*}
f_{s}^{(\mathbf{k})}=(1+N_{\mathbf{k}}^{2})^{s/2}f^{(\mathbf{k)}}.
\end{equation*}%
We then have $\|f_s\|_0=\|f\|_s$.
Then $\Lambda _{\epsilon }U=f$ leads to $(\Lambda _{\epsilon }U)_{s}\in \mathcal{H}_{0},\text{ \ }||(\Lambda
_{\epsilon }U)_{s}||=||f||_{s}.$

By definition%
\begin{equation*}
(\Lambda _{\epsilon }U)^{(\mathbf{k})}=(1-|\mathbf{k|}^{2})^{2}U^{(\mathbf{k}%
)}+\epsilon ^{2}\Sigma _{\mathbf{l}\in \sigma _{2}}a^{(\mathbf{k-l})}U^{(%
\mathbf{l})},
\end{equation*}%
where $\mathbf{k}\in \sigma _{2}.$ 
Now 
\begin{equation*}
(\Lambda _{\epsilon }U)_{s}^{(\mathbf{k})}-(\Lambda _{\epsilon }U_{s})^{(%
\mathbf{k})}=\epsilon ^{2}\Sigma _{\mathbf{l}\in \sigma _{2}}[(1+N_{\mathbf{k%
}}^{2})^{s/2}-(1+N_{\mathbf{l}}^{2})^{s/2}]a^{(\mathbf{k-l})}U^{(\mathbf{l}%
)},
\end{equation*}%
and since $|N_{\mathbf{k}}-N_{\mathbf{l}}|\leq 2$ from the form of $a$, we have from Lemma \ref{little lemma}
\begin{equation*}
|(1+N_{\mathbf{k}}^{2})^{s/2}-(1+N_{\mathbf{l}}^{2})^{s/2}|\leq d(s/2,2)(1+N_{%
\mathbf{k}}^{2})^{s/2-1}.
\end{equation*}%
Now define $\widetilde{U}$ for any $U\in \mathcal{H}_s$ by
\begin{equation*}
\widetilde{U}^{(\mathbf {k})}=|\widetilde{U}^{(\mathbf {k})}|.
\end{equation*}
Then $||\widetilde{U}||_s=||U||_s$ and since for $0<s \leq 2$, $(1+N_{%
\mathbf{k}}^{2})^{s/2-1} \leq 1$,
\begin{equation*}
|[(\Lambda _{\epsilon }U)_{s}-(\Lambda _{\epsilon }U_{s})]^{(\mathbf{k})}|\leq\epsilon^2d(s/2,2))
(\widetilde a\widetilde U)^{(\mathbf{k})}
\end{equation*}
(where $\tilde{a}$ differs from the one defined at Lemma \ref{LemFirstDecomp}),
hence for $0<s \leq 2$
\begin{equation*}
||(\Lambda _{\epsilon }U)_{s}-(\Lambda _{\epsilon }U_{s})||_{0}\leq
d(s/2,2)\epsilon ^{2}||\widetilde{a}\widetilde{U}||_{0}\leq d_{s}\epsilon ^{2}||U||_{0}.
\end{equation*}
Hence we obtain%
\begin{equation*}
||\Lambda _{\epsilon }U_{s}||_{0}\leq ||(\Lambda _{\epsilon
}U)_{s}||_{0}+d_{s}/2||f||_{0}=||f||_{s}+d_{s}/2||f||_{0},
\end{equation*}%
and finally, for $0 \leq s \leq 2$
\begin{equation*}
||U||_{s}=||U_{s}||_{0}\leq \frac{1}{2\epsilon ^{2}}||\Lambda _{\epsilon
}U_{s}||_{0}\leq \frac{c_{s}}{\epsilon ^{2}}||f||_{s}.
\end{equation*}%
 Let us prove by induction on $s \geq 0$ that $||\Lambda_{\epsilon}^{-1}||_s\leq c_{s}\epsilon^{-2}$.
This holds for $0 \leq s\leq 2$. Assume that it holds for $s-2$, then
\begin{equation*}
|[(\Lambda _{\epsilon }U)_{s}-(\Lambda _{\epsilon }U_{s})]^{(\mathbf{k})}|\leq\epsilon^2d(s/2,2))
(1+N_{\mathbf{k}}^{2})^{s/2-1}(\widetilde a\widetilde U)^{(\mathbf{k})}.
\end{equation*}
Hence, we have
\begin{equation*}
||(\Lambda _{\epsilon }U)_{s}-(\Lambda _{\epsilon }U_{s})||_{0}\leq
d(s/2,2)\epsilon ^{2}||\widetilde{a}\widetilde{U}||_{s-2}\leq d_{s}\epsilon ^{2}||U||_{s-2}.
\end{equation*}%
We assumed that $||\Lambda _{\epsilon
}^{-1}||_{s-2}\leq \frac{c_{s-2}}{\epsilon ^{2}},$ hence we obtain%
\begin{equation*}
||\Lambda _{\epsilon }U_{s}||_{0}\leq ||(\Lambda _{\epsilon
}U)_{s}||_{0}+d_{s}c_{s-2}||f||_{s-2}=||f||_{s}+d_{s}c_{s-2}||f||_{s-2},
\end{equation*}%
hence%
\begin{equation*}
||U||_{s}=||U_{s}||_{0}\leq \frac{1}{2\epsilon ^{2}}||\Lambda _{\epsilon
}U_{s}||_{0}\leq \frac{c_{s}}{\epsilon ^{2}}||f||_{s}.
\end{equation*}%
This ends the proof of Lemma \ref{estimInvH_s}.
\end{proof}

Then we finally have

\begin{lemma}
\label{invL_eps} For any $q\geq 4,$ and  $s\geq 0,$there exists $\epsilon
_{0}>0,$ such that for $0<\epsilon \leq \epsilon _{0}$ the linear operator $%
\mathcal{L}_{\epsilon }$ has a bounded inverse in $\mathcal{H}_{s},$ with%
\begin{equation*}
||\mathcal{L}_{\epsilon }^{-1}||_{s}\leq \frac{c(s)}{\epsilon ^{2}},
\end{equation*}%
where $c(s)$ is a positive constant only depending on $s.$

\begin{proof}
From Lemma \ref{estimInvH_s} we have%
\begin{equation*}
(\mathcal{L}_{\epsilon }^{(2)})^{-1}=(1+\epsilon ^{4}\Lambda _{\epsilon
}^{-1}R_{\epsilon })^{-1}\Lambda _{\epsilon }^{-1},
\end{equation*}%
and $\TEXTsymbol{\vert}\TEXTsymbol{\vert}\epsilon ^{4}\Lambda _{\epsilon
}^{-1}R_{\epsilon }||_{s}\leq c_{s}\epsilon ^{2}||R_{\epsilon }||_{s}\leq
c_{s}^{\prime }\epsilon ^{2}.$ For $\epsilon $ small enough we then have%
\begin{equation*}
||(\mathcal{L}_{\epsilon }^{(2)})^{-1}||\leq \frac{2c_{s}}{\epsilon ^{2}}.
\end{equation*}%
Then, from Lemma \ref{LemFirstDecomp} we deduce immediately that there
exists a constant $c(s)$ such that%
\begin{equation*}
||U||_{s}\leq \frac{c(s)}{\epsilon ^{2}}||f||_{s},
\end{equation*}%
which proves the Lemma.
\end{proof}
\end{lemma}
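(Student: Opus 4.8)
The plan is to assemble the estimate from the two ingredients already prepared. By Lemma~\ref{LemFirstDecomp}, solving $\mathcal{L}_{\epsilon}U=f$ in $\mathcal{H}_{s}$ is equivalent to solving $\mathcal{L}_{\epsilon}^{(2)}U_{2}=\mathcal{Q}(\epsilon)f$ on the subspace $E_{2}$, with $\mathcal{L}_{\epsilon}^{(2)}=\Lambda_{\epsilon}+\epsilon^{4}R_{\epsilon}$, where $R_{\epsilon}$ and $\mathcal{Q}(\epsilon)$ are bounded in $\mathcal{H}_{s}$ uniformly in $\epsilon$; and by Lemma~\ref{estimInvH_s} the dominant operator $\Lambda_{\epsilon}$ is invertible with $\|\Lambda_{\epsilon}^{-1}\|_{s}\leq c_{s}\epsilon^{-2}$. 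First I would factor $\mathcal{L}_{\epsilon}^{(2)}=\Lambda_{\epsilon}\bigl(\mathbb{I}+\epsilon^{4}\Lambda_{\epsilon}^{-1}R_{\epsilon}\bigr)$ and note $\|\epsilon^{4}\Lambda_{\epsilon}^{-1}R_{\epsilon}\|_{s}\leq c_{s}\epsilon^{2}\|R_{\epsilon}\|_{s}$, which is $<1/2$ once $\epsilon$ is small enough. A convergent Neumann series then gives $\|(\mathcal{L}_{\epsilon}^{(2)})^{-1}\|_{s}\leq 2c_{s}\epsilon^{-2}$, and in particular $\mathcal{L}_{\epsilon}^{(2)}$ is boundedly invertible on $E_{2}$.

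Next I would transfer this back to $\mathcal{H}_{s}$. Given $f\in\mathcal{H}_{s}$, set $U_{2}=(\mathcal{L}_{\epsilon}^{(2)})^{-1}\mathcal{Q}(\epsilon)f$, so that $\|U_{2}\|_{s}\leq 2c_{s}\epsilon^{-2}\|\mathcal{Q}(\epsilon)\|_{s}\|f\|_{s}$, and then reconstruct $U_{0}=P_{0}U$ and $U_{1}=P_{1}U$ from the expressions in Lemma~\ref{LemFirstDecomp}, whose estimates read $\|U_{0}\|_{s}\leq c\epsilon^{2}\|U_{2}\|_{s}+(c/\epsilon)\|(P_{0}+P_{1})f\|_{s}$ and $\|U_{1}\|_{s}\leq c\epsilon^{4}\|U_{2}\|_{s}+(c/\epsilon^{2})\|(\epsilon P_{0}+P_{1})f\|_{s}$. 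Inserting the bound on $\|U_{2}\|_{s}$ and adding $\|U_{0}\|_{s}+\|U_{1}\|_{s}+\|U_{2}\|_{s}$, the worst contribution is of order $\epsilon^{-2}\|f\|_{s}$, giving $\|U\|_{s}\leq c(s)\epsilon^{-2}\|f\|_{s}$; since the reduction of Lemma~\ref{LemFirstDecomp} is an equivalence and $\mathcal{L}_{\epsilon}^{(2)}$ is boundedly invertible, this is genuine invertibility of $\mathcal{L}_{\epsilon}$, not merely an a priori bound.

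The only delicate point — and the reason the truncation order of $U_{\epsilon}$ in (\ref{eq:perturbed}) is taken as it is (cf.\ Remark~\ref{degree-truncation}) — is the balance $\epsilon^{4}\cdot\epsilon^{-2}=\epsilon^{2}\to 0$ in the perturbation term $\epsilon^{4}\Lambda_{\epsilon}^{-1}R_{\epsilon}$: the remainder $R_{\epsilon}$ must enter at order $\epsilon^{4}$, strictly beating the $\epsilon^{-2}$ blow-up of $\Lambda_{\epsilon}^{-1}$, so that the Neumann series converges. Everything substantive — the block-diagonal structure of $\Lambda_{\epsilon}$ on $\pi/q$-invariant functions, the eigenvalue expansion $(|\mathbf{k}'+\mathbf{k}_{j}|^{2}-1)^{2}+3\epsilon^{2}+O(\epsilon^{4})$ with its crucial \emph{positive} coefficient $3$ (Lemma~\ref{eigenvLambda}), and the passage from the $\mathcal{H}_{0}$ bound to the $\mathcal{H}_{s}$ bound via the Moser-type estimate of Lemma~\ref{little lemma} (done in Lemma~\ref{estimInvH_s}) — has already been carried out; this final lemma is the routine bookkeeping that packages those results, so I expect no real obstacle here beyond checking the powers of $\epsilon$ line up.
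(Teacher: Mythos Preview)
Your proposal is correct and follows essentially the same route as the paper: factor $\mathcal{L}_{\epsilon}^{(2)}=\Lambda_{\epsilon}(\mathbb{I}+\epsilon^{4}\Lambda_{\epsilon}^{-1}R_{\epsilon})$, invert via a Neumann series using $\|\Lambda_{\epsilon}^{-1}\|_{s}\leq c_{s}\epsilon^{-2}$ from Lemma~\ref{estimInvH_s}, and then recover the full estimate on $U$ through the reconstruction formulas of Lemma~\ref{LemFirstDecomp}. Your version spells out the bookkeeping for $U_{0}$ and $U_{1}$ that the paper compresses into ``we deduce immediately,'' but the argument is the same.
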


\section{Existence of the solution}

Below we prove our main result
\begin{theorem}
For any $q\geq 4$ and for any $s>q/2,$ there exists $\lambda _{0}>0$ such
that for $0<\lambda <\lambda _{0},$ there exists a quasipattern solution of
the Swift-Hohenberg steady equation (\ref{eq:sh}) in $\mathcal{H}_{s}$,
bifurcating from 0 and invariant under rotations of angle $\pi /q.$ Its
asymptotic expansion at the origin is given by the formal expansion computed
in \cite{iooss-rucklidge}.
\end{theorem}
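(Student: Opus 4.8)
The plan is to recast the problem as a fixed-point equation for $W$ and to solve it by the contraction mapping principle, the decisive ingredient---the bound $\|\mathcal{L}_{\epsilon}^{-1}\|_{s}\leq c(s)/\epsilon^{2}$---being already furnished by Lemma~\ref{invL_eps}. Fix $s>q/2$, so that $\mathcal{H}_{s}$ is a Banach algebra by Lemma~\ref{algebrab}. Given a small $\lambda>0$, I would first use $\lambda_{2}=3(2q-1)>0$ and the implicit function theorem to obtain the unique small positive $\epsilon=\epsilon(\lambda)$ with $\lambda=\epsilon^{2}\lambda_{2}+\epsilon^{4}\lambda_{4}$, so that $\epsilon^{2}\sim\lambda/\lambda_{2}$ as $\lambda\to0^{+}$. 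With this $\epsilon$, by (\ref{basic-equW}) the equation (\ref{eq:sh}) for $U=U_{\epsilon}+\epsilon^{4}W$ is equivalent to $\mathcal{F}(\epsilon,W)=0$, hence, since $\mathcal{L}_{\epsilon}$ is boundedly invertible on $\mathcal{H}_{s}$, to
\begin{equation*}
W=\mathcal{T}_{\epsilon}(W):=-\mathcal{L}_{\epsilon}^{-1}\left(\epsilon^{3}f_{\epsilon}+3\epsilon^{4}U_{\epsilon}W^{2}+\epsilon^{8}W^{3}\right).
\end{equation*}
I would search for $W$ in the closed subspace of real-valued, $R_{\pi/q}$-invariant elements of $\mathcal{H}_{s}$, which is stable under $\mathcal{T}_{\epsilon}$ because $f_{\epsilon}$ and $U_{\epsilon}$ are real and $\pi/q$-invariant and all operators entering $\mathcal{L}_{\epsilon}^{-1}$ (the reduction of Lemma~\ref{LemFirstDecomp} and the block structure of $\Lambda_{\epsilon}$) respect this symmetry.

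Next I would estimate $\mathcal{T}_{\epsilon}$ on a ball $B_{\rho}=\{\|W\|_{s}\leq\rho\}$ with $\rho$ a fixed constant. Using $\|\mathcal{L}_{\epsilon}^{-1}\|_{s}\leq c(s)/\epsilon^{2}$, the fact that $f_{\epsilon}=\mathcal{O}(1)$ has a finite Fourier expansion (hence lies in every $\mathcal{H}_{s}$), that $\|U_{\epsilon}\|_{s}=\mathcal{O}(\epsilon)$, and the algebra inequality of Lemma~\ref{algebrab}, one obtains
\begin{equation*}
\|\mathcal{T}_{\epsilon}(W)\|_{s}\leq C_{1}\epsilon+C_{2}\epsilon^{3}\|W\|_{s}^{2}+C_{3}\epsilon^{6}\|W\|_{s}^{3},
\end{equation*}
and, expanding $W^{2}-W'^{2}=(W+W')(W-W')$ and $W^{3}-W'^{3}=(W^{2}+WW'+W'^{2})(W-W')$,
\begin{equation*}
\|\mathcal{T}_{\epsilon}(W)-\mathcal{T}_{\epsilon}(W')\|_{s}\leq\left(C_{4}\epsilon^{3}(\|W\|_{s}+\|W'\|_{s})+C_{5}\epsilon^{6}(\|W\|_{s}^{2}+\|W'\|_{s}^{2})\right)\|W-W'\|_{s}.
\end{equation*}
The crucial point here---this is exactly the content of Remark~\ref{degree-truncation}---is that after applying $\mathcal{L}_{\epsilon}^{-1}$, which costs a factor $\epsilon^{-2}$, the forcing $\epsilon^{3}f_{\epsilon}$ becomes $\mathcal{O}(\epsilon)$, the quadratic term $\mathcal{O}(\epsilon^{3})$ and the cubic term $\mathcal{O}(\epsilon^{6})$, so that for $\epsilon$ small $\mathcal{T}_{\epsilon}$ maps $B_{\rho}$ into itself and is a contraction there. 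The contraction mapping principle then provides a unique fixed point $W_{\epsilon}\in B_{\rho}$, and inserting this back into the first estimate gives $\|W_{\epsilon}\|_{s}\leq 2C_{1}\epsilon=\mathcal{O}(\epsilon)$.

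Finally I would collect the conclusions. The function $U=U_{\epsilon}+\epsilon^{4}W_{\epsilon}$ lies in $\mathcal{H}_{s}$, is real and invariant under $R_{\pi/q}$, and solves (\ref{eq:sh}) with the prescribed $\lambda$; since $\|U-\epsilon u_{0}\|_{s}=\mathcal{O}(\epsilon^{3})$ while $\|\epsilon u_{0}\|_{s}=\epsilon\|u_{0}\|_{s}$, it is nonzero, and $\|U\|_{s}\to0$ as $\lambda\to0^{+}$, so it bifurcates from $0$. For the asymptotic expansion, $\|W_{\epsilon}\|_{s}=\mathcal{O}(\epsilon)$ already yields $U=\epsilon u_{0}+\epsilon^{3}u_{1}+\mathcal{O}(\epsilon^{5})$; to reach all orders I would rerun the construction with $U_{\epsilon}$ replaced by the partial sum $\epsilon u_{0}+\epsilon^{3}u_{1}+\dots+\epsilon^{2N+1}u_{N}$ of the formal series of \cite{iooss-rucklidge}, for which the new forcing is $\mathcal{O}(\epsilon^{2N+1})$ and the new perturbation carries $\epsilon^{2N+2}$, so the same argument applies and, by the local uniqueness just obtained, yields the same $U$ to within $\mathcal{O}(\epsilon^{2N+2})$ of that partial sum; hence the asymptotic expansion of $U$ is the formal series. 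The only genuinely hard step in all of this is the inverse estimate of Lemma~\ref{invL_eps}; once it is in hand, the argument above is essentially a bare application of the implicit function theorem.
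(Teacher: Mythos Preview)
Your proposal is correct and follows essentially the same route as the paper: rewrite the equation as a fixed-point problem $W=\mathcal{G}(\epsilon,W)$ (your $\mathcal{T}_{\epsilon}$), use the key estimate $\|\mathcal{L}_{\epsilon}^{-1}\|_{s}\leq c(s)/\epsilon^{2}$ from Lemma~\ref{invL_eps} together with the algebra property of $\mathcal{H}_{s}$ to show it is a contraction on a fixed ball, and conclude by the contraction mapping principle (the paper phrases this last step as Dieudonn\'e's implicit function theorem, which amounts to the same thing). Your treatment is in fact slightly more explicit than the paper's on two points---the preservation of the real, $R_{\pi/q}$-invariant subspace, and the bootstrap to the full asymptotic expansion via higher truncations and local uniqueness---both of which the paper leaves implicit.
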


\begin{proof}
We want to solve (\ref{basic-equW}) with respect to $W$ in $\mathcal{H}%
_{s},s>q/2.$ Taking into account of Lemma \ref{invL_eps}, this equation
takes the following form for $\epsilon \neq 0:$%
\begin{equation}
W=-\epsilon ^{3}\mathcal{L}_{\epsilon }^{-1} [f_{\epsilon}
+3\epsilon U_{\epsilon }W^{2}+\epsilon ^{5}W^{3}],  \label{equW}
\end{equation}%
which we write as%
\begin{equation*}
W=\mathcal{G}(\epsilon ,W)
\end{equation*}%
where $\mathcal{G}$ is well defined in $]0,\ep_0[\times\mathcal{H}_{s},$ depending smoothly
on its arguments for $0<\epsilon <\epsilon _{0}$ and $W\in \mathcal{H}_{s}.$
In fact for $\epsilon \neq 0$ fixed, $\mathcal{G}$ is analytic in $W,$ and
observing that $\epsilon ^{3}\mathcal{L}_{\epsilon }^{-1}=O(\epsilon ),$ we
see that $\mathcal{G}$ is continuous in $\epsilon $ on $]0,\epsilon _{0}[.$

The map $\mathcal{G}$ is Lipschitz in $W$ in a fixed ball of $\mathcal{H%
}_{s},$ with a small Lipschitz constant for $%
\epsilon \in ]0,\epsilon _{0}[$. Indeed, we have
\begin{equation*}
\|\mathcal{G}(\epsilon ,W)-\mathcal{G}(\epsilon ,W')\|_s\leq \|\epsilon ^{3}\mathcal{L}_{\epsilon }^{-1}[3\epsilon U_{\epsilon}(W^{2}-W'^{2})+\epsilon ^{5}(W^{3}-W'^{3})]\|_s,
\end{equation*}
$\epsilon ^{3}\|\mathcal{L}_{\epsilon }^{-1}\|_s\leq c_s\epsilon $ and $\|U_{\ep}\|_s\leq c_s'\ep$. Moreover, we have
$$
\mathcal{G}(\epsilon ,0)=-\epsilon ^{3}\mathcal{L}_{\epsilon }^{-1}(f_{\epsilon})=0(\ep).
$$

Than according Dieudonn\'e's version of the implicit function theorem \cite{dieudonne1}[(10.1.1)], there exists a unique mapping $W(\ep)$ into a ball in ${\cal H}_s$ of size $O(\epsilon)$, such that 
$$
W(\ep)=\mathcal{G}(\epsilon ,W(\ep))
$$
for all $\ep\in]0,\ep_0[$ and $W$ is continuous there.
Finally we have a solution $U=U_{\epsilon }+\epsilon^4 W(\epsilon )$ of (\ref%
{eq:sh}) of the form (\ref{eq:perturbed}).
\end{proof}
\appendix
\section{Proof of Lemma \ref{algebrab}}
We follow and modify the argument of Iooss-Rucklidge \cite{iooss-rucklidge}[appendix C] used to prove that $\cH_s$ is an algebra \cite{iooss-rucklidge}[lemma 4.2]. 
Let 
$$
u=\sum_{\bk\in \Ga}u^{(\bk)}(\nu)e^{i\bk.\bf{x}},\quad v=\sum_{\bk\in \Ga}v^{(\bk)}(\nu)e^{i\bk.\bf{x}}
$$
be elements of $\cH_{s}\cap\cH_{s'}$.
We have 
$$
2^{-2s+1}\|uv\|_{s}^2\leq \underbrace{\sum_{\bK}\left\|\sum_{\bk+\bk'=\bK}u^{(\bk)}v^{(\bk')}\right\|_{p,K}^2(1+N_{\bk}^2)^s}_{S_1}+\underbrace{\sum_{\bK}\left\|\sum_{\bk+\bk'=\bK}u^{(\bk)}v^{(\bk')}\right\|_{p,K}^2(1+N_{\bk'}^2)^s}_{S_2}.
$$
Moreover, we have
$$
\frac{1}{2}S_1\leq \underbrace{\sum_{\bK}\left|\sum_{\substack{\bk+\bk'=\bK\\ N_{\bk}\leq 3 N_{\bk'}}}u^{(\bk)}v^{(\bk')}\right|^2(1+N_{\bk}^2)^s}_{S'_1}+\underbrace{\sum_{\bK}\left|\sum_{\substack{\bk+\bk'=\bK\\ N_{\bk} >3 N_{\bk'}}}u^{(\bk)}v^{(\bk')}\right|^2(1+N_{\bk}^2)^s}_{S''_1}.
$$
Using the fact that $1\leq \left(\frac{16(1+N_{\bk'}^{2})}{(1+N_{\bK}^{2})}\right)^{s'}$ and Cauchy-Schwarz inequality,  we obtain ($s'>q/2$)
$$
S'_1\leq (K^*)^2\|u\|_{s}^2\|v\|_{s'}^2\sum_{\bK}\left(\frac{16}{(1+N_{\bK}^{2})}\right)^{s'}\leq C\|u\|_s^2\|v\|_{s'}^2.
$$
To obtain a similar bound for $S''_1$, we use the Iooss-Rucklidge dyadic decomposition of 
$S''_1$~: $\De_pu:=\sum_{2^p\leq N_{\bk}<2^{p+1}}u^{(\bk)}e^{i\bk.\bf{x}}$, $\De_{-1}u:=u^{(\mathbf{0})}$ and $S_k u:=\sum_{p=-1}^k\De_pu$. Then, $u=\sum_{p\geq -1}\De_pu\in\cH_{s}$ if and only if $\sum_{p\geq -1}2^{2ps}\|\De_pu\|_{0}^2<+\infty$. According to the computation of \cite{iooss-rucklidge}[p. 387], we have
\beq\label{prodpartiel}
S''_1=\left\|\sum_{\bK}\left(\sum_{\substack{\bk+\bk'=\bK\\ N_{\bk} >3 N_{\bk'}}}u^{(\bk)}v^{(\bk')}\right)e^{i\bK. \bf{x}}\right\|_{s}\leq C\sum_{j=-1}^{+\infty}2^{2js}\left\|\left(\sum_{p=j-1}^{j+1}\De_j(S_{p-1}v\De_pu)\right)\right\|_{0}^2
\eeq
Since $s'>q/2$, by Cauchy-Schwarz, we have $\sum |v^{(\bk')}| \leq c\|v\|_{s'}$. So, following the computations \cite{iooss-rucklidge}[p.388], we obtain $\|S_{p-1}v\De_pu\|_{0}^{2}\leq C\|\De_pu\|_{0}^2\|v\|_{s'}^2$. From this and following the same computation as in \cite{iooss-rucklidge}[p. 388], we obtain
$$
S''_1\leq C''\|v\|_{s'}^2\sum_{p=-1}^{\infty}2^{2ps}\|\De_pu\|_{0}^2\leq C_1\|v\|_{s'}^2\|u\|_{s}^2
$$
To get an estimate for $S_2$, we just need to interchange the role of $u$ and $v$ and the result is proved.

\section{Proof of Lemma \ref{little lemma}}
We assume $x,y>0,$ and $p>0$ and%
\[
|x-y|\leq K.
\]%
Then, we prove that 
\[
|(1+x)^{p}-(1+y)^{p}|\leq d(p)(1+x)^{p-1},
\]%
with%
\begin{eqnarray*}
d(p) &=&pK(1+K)\text{ for }p>1, \\
&=&pK(1+K)^{1-p} \text{ for }p\leq 1.
\end{eqnarray*}%
\begin{proof}
For some $t$ between $x$ and $y$, we have
\begin{equation*}
(1+x)^p-(1+y)^p=p(x-y)(1+t)^{p-1}.
\end{equation*}

If $p \geq 1$ we use $(1+t)/(1+x) =1+(t-x)/(1+x) \leq 1+K$  if  $x <t < y$ and
$\leq 1$ if $y<t <x $. This proves the lemma if $p \geq 1$.

Similarly if $p \leq 1$ we use $(1+x)/(1+t) \leq 1 $ if  $x<t<y$  and 
$=1+(x-t)/(1+t)\leq
1+K$  if  $y <t<x$ and so $(1+t)^{p-1}\leq const.(1+x)^{p-1}$.\end{proof}

\end{document}